\documentclass[twocolumn]{autart}    

\usepackage{graphicx}          

\usepackage{amsmath,amssymb,amsfonts}  

\usepackage{amssymb} 

\usepackage[T1]{fontenc}  

\usepackage{subfigure}  

\usepackage{xcolor} 

\usepackage[numbers]{natbib} 

\usepackage{nccmath}  

\begin{document}

\begin{frontmatter}

\title{Decentralized Strategies for Finite Population LQG Social Control: A Reinforcement Learning Approach \thanksref{footnoteinfo}} 

\thanks[footnoteinfo]{Corresponding author: Bing-Chang Wang.}
\thanks[footnoteinfo]{  This research was supported by the National Key R\&D Program of China under Grant No.2022YFA1006100, 
the National Natural Science Foundation of China under Grant Nos.62573266 and 61925306.}

\author{Liangyuan Guo}\ead{ lyguo@mail.sdu.edu.cn },    
\author{Bing-Chang Wang}\ead{bcwang@sdu.edu.cn},               
\author{Guangchen Wang}\ead{ wguangchen@sdu.edu.cn}  

\address{School of Control Science and Engineering, Shandong University, Jinan 250061, China}  

\begin{keyword}                           
    Decentralized social control; reinforcement learning; finite population; indefinite weighting matrix.               
\end{keyword}                             

\begin{abstract}                          
    This paper presents a novel model-free algorithm for the finite-population  linear quadratic Gaussian (LQG) decentralized social control problem  with multiplicative noise.
    The state and control weights in the cost functional are not limited to be positive semidefinite.
    For both finite-horizon and infinite-horizon cases, the goal is to  obtain a social optimum by solving two algebraic Riccati equations (AREs), without requiring prior knowledge of the system matrices.
    Then, we complete the design of a model-free algorithm for solving the decentralized social control problem. 
    Especially, in the infinite-horizon case, the algorithm's convergence is based on analyzing the spectral property of the Lyapunov-type operator.
    The differences of reinforcement learning (RL) solutions between the finite-horizon and infinite-horizon cases are compared. 
    Finally,  the effectiveness of the proposed algorithm is demonstrated by a numerical example.

\end{abstract}

\end{frontmatter}

\section{Introduction}

The decentralized social control provides a useful method for solving large population cooperative differential games, which are characterized by agents seeking to optimize a common cost.
Decentralized social control has attracted considerable attention in various fields,  
including system control, financial mathematics, crowd dynamics and engineering  (\cite{aurell2018mean}, \cite{cardaliaguet2019master}, \cite{carmona2013control}, \cite{tembine2013risk}).
 
\subsection{Mean field game and control}

Mean field games originated from the pioneering achievements of \cite{Huang2007Large} and \cite{Lasry2007MeanFG}. 
The mean field game and control is full of potential in theoretical and practical application. 
A mean field game can be categorized into two types, namely the linear quadratic Gaussian (LQG) type and the more general type, 
which is determined by its state-cost setup. 
The LQG type is prevalently used in mean field problems, due to its wide practical backgrouds.
See \cite{Moon2017Linear} for LQG type mean field games with risk-sensitive and worst-case risk-neutral cost, 
\cite{Bardi2014Linear} with ergodic cost, \cite{Li2008Asymptotically} with time-averaged stochastic cost, 
\cite{Wang2012Mean} with Markov jump parameters, 
\cite{Li2025Linear} with partial observation.
Some recent works for nonlinear mean field games include \cite{Fischer2017ON} and \cite{Lacker2020ON}. 
In addition, the robust mean field games are studied in \cite{Huang2017Robust}, 
Stackelberg mean field games in \cite{Bensoussan2015Mean} and \cite{Li2024LinearQuadraticSS}, zero-sum differential mean field games in \cite{Moon2020Linear}.

Mean field models have been extensively studied for social optima. 
Social optima are linked to a type of team decision problem \cite{Ho1980Decision} but with highly complex interactions. 
Accordingly, a team decision problem can be formulated where all cooperative agents share a common cost functional. 
The goal of collaborating agents is to minimize the total cost in a large population system.
However, achieving social optima often requires centralized information from all agents, which poses huge computational and implementation challenges. 
To cope with these challenges, decentralized cooperative optimization frameworks have been developed, utilizing the mean field model to asymptotically achieve social optima.
\cite{Huang2007Large} developed a state aggregation technique  to obtain a set of decentralized control laws for the individuals which possesses an $\epsilon$-Nash equilibrium property.
\cite{Huang2012Social} developed the social certainty equivalence approach for asymptotically achieving the social optima. 
\cite{Wang2020Mean} gave the necessary and sufficient conditions for uniform stabilization of the systems
and designed a set of decentralized control laws by using solutions of two Riccati equations.
\cite{Huang2021Linear} employed a rescaling approach to reduce the Riccati ordinary differential equation (ODE) to some lower-order ODE system, 
which characterizes a necessary and sufficient condition for asymptotic solvability.

Most existing works on mean-field games and control have studied the large or infinite population systems.
However, small or moderate-population systems are also considered in many practical situations \cite{Ma2013Decentralized}, \cite{Kizilkale2016load}.
 In mean-field games and control theory, the classical results have limitations for finite-population systems. 
Specifically, the asymptotically optimal strategies derived from these results may lead to performance loss in finite-population systems.
Therefore, it is significant to study the optimal decentralized control for the finite-population systems.
\cite{Wang2023Decentralized} first gave necessary and sufficient conditions for the solvability of finite-population decentralized games and teams in terms of the  forward-backward stochastic difference equations (FBSDEs).
\cite{Liang2024Discrete} considered the optimal decentralized strategies for finite-population systems in discrete-time settings.
Notably, the cost function's weighting matrices are not required to be definite 
and the solvability of decentralized social optimal control is characterized by the existence of
regular solutions to two Riccati equations.

\subsection{Reinforcement learning}

In recent decades, the numerical methods for the mean field games and control have attracted tremendous attention.
Several numerical methods for mean-field models have been designed, 
such as the fixed-point approach (\cite{Caines2021Graphon}, \cite{Li2008Decentralized}), 
Semi-Lagrangian scheme (\cite{Carlini2012AFD}), finite difference discretization (\cite{Arias2018Proximal}), etc.
However, these methods require the system dynamics to be known, which limits their practical applicability.
 
The problem of learning solutions to mean field game and control without requiring the system matrices has garnered significant interest. 
Recently, Reinforcement learning (RL) has been successfully applied to solve complex decision making problems \cite{wang2020reinforcement}.
In RL, a agent learns the control policies by interacting with the environment \cite{Rizvi2018Output}. 
For more details, see \cite{sutton1998reinforcement}. 
Various algorithms have been developed for mean field games and control, 
including  Q-learning (\cite{guo2019learning}, \cite{angiuli2022unified}, \cite{gu2021mean}), 
entropy regularization (\cite{Guo2022Entropy}), fictitious play (\cite{perrin2020fictitious}). 
The connection between RL for mean field control and finite-agent problems is analyzed in \cite{wang2020breaking}, \cite{chen2021pessimism}.
Nonhomogeneous populations are considered in \cite{hu2023graphon}. 
\cite{Li2024Policy} employed a policy iteration RL method using a Lyapunov Recursion for the continuous-time LQG
mean-field control problems in infinite horizon.
\cite{Xu2023Model} and \cite{XU2025Mean} presented a model-free method to solve LQG mean field games and mean field social control, respectively.
However, most studies focus either on mean field games or on infinite population systems.
Research on the learning algorithms for mean field social control on finite population is relatively scarce.

\subsection{Contribution} 

Most of related work focuses on the infinite-population or large-population systems with traditional numerical methods.
However, the existing research has the following gaps. 
(1) Results on classical mean field games and control mainly focused on large-population or infinite-population systems.
However, for the practical finite-population systems, using strategies based on the above results can cause a considerable optimality loss.
The corresponding data-driven methods do not work well for finite-population systems.
(2) Existing model-free algorithms predominantly focus on the infinite-horizon case and the work on finite-horizon problems is scarce.
The relationship of RL solutions under the above two cases  remains unclear,
necessitating further investigation to bridge this gap.
(3) Existing results on the convergence of policy iteration algorithms are guaranteed 
when the weight matrices $Q$ and $R$ satisfy (semi-) definiteness conditions.
Conversely, for non-standard/indefinite weight matrices, convergence analyses are sparse.

In this paper, we study the finite-population LQG mean field social control problems in the discrete time.
If the system dynamics are known, the optimal control gains can be obtained by solving two Riccati equations \cite{Liang2024Discrete}.
However, the technical challenge arises when the dynamics of agents are completely unknown.
Inspired by the abovementioned related work, especially \cite{Liang2024Discrete} and \cite{XU2025Mean}, 
this paper develops a model-free algorithm for solving the finite-population LQG mean field social control problem.

{ Different from existing work, this paper considers the model-free learning for  finite-population systems.}
The main contributions of the paper are summarized as follows.

(1) We develop a model-free algorithm to find decentralized strategies under indefinite weight matrices. 
Unlike traditional methods that rely on positive (semi-) definite state and control weights for convergence, 
we relax this condition to allow for indefinite weight matrices.
Furthermore, we demonstrate three differences in RL solutions between finite-horizon and infinite-horizon cases.

(2) We present the monotonic decrease property for iterative equations, which is different from the matrix inversion method.
When the quadratic term related to control ($B^T \bar{P} B$) exists in the ARE, the direct matrix inversion formula method is infeasible.
We combine two coupled AREs into a new equation and prove the monotonic decrease and boundedness of the iterative algorithms, ensuring the convergence of the iterative sequences.

(3) For addressing the indefinite AREs, 
we show the convergence of iterative algorithms with indefinite weights for both finite-horizon and infinite-horizon cases.
Notably, 
in the infinite-horizon case, it builds upon analyzing the spectral property of the Lyapunov-type operator.

\subsection{Notations}
Let $\mathbb{R}^{m \times n}$ be the set of all $m \times n$ matrices.
A matrix $A \in \mathbb{R}^{n \times n}$ is positive definite (resp. positive semi-definite), expressed as $A \in \mathbb{S}^n_+$ (resp. $A \in \mathbb{S}^n$).
$\operatorname{vecs}(A) \triangleq [a_{11},2a_{12},\ldots,2a_{1n},a_{22}, \ldots, a_{nn}]^{\mathrm{T}}  \in\mathbb{R}^{\frac12n(n+1)}.$
$\otimes$ is the Kronecker product.
For a vector $x \in \mathbb{R}^n$ and a matrix $A \in \mathbb{R}^{n \times m}$, 
define the following operators: 
$\tilde{x} \triangleq [x_{1}x_{1}, \dots,x_{1}x_{n},x_{2}x_{2}, \dots,x_{2}x_{n},\dots,x_{n}x_{n}] \in\mathbb{R}^{\frac12n(n+1)}.$
$\mathrm{vec}(A) \triangleq [ a_{1}^{\mathrm{T}}, a_{2}^{\mathrm{T}},\ldots, a_{m}^{\mathrm{T}} ]^{\mathrm{T}}\in\mathbb{R}^ {nm},$ 
where $a_{i}$ is the $i$-th column of $A$. $diag(\cdots)$ is the diagonal matrix.

\section{Problem Formulation}
Consider a finite-population system with $N$ homogeneous players, 
where the dynamics of player $\mathcal{A}_{j}$ are given by
the discrete-time linear stochastic difference equation
\begin{equation}
x^j(k+1)=Ax^j(k)+Bu^j(k)+(Cx^j(k)+Du^j(k))w^j(k), \label{e1_1}
\end{equation}
where $\{x_k^j \in \mathbb{R}^n,k \in \mathbb{T}\}$ and $\{u_k^j \in \mathbb{R}^m,k \in \mathbb{T}\}$ 
are the state and control processes of the player $\mathcal{A}_{j}$, respectively.
$w^j(k)$ is the scalar Gaussian  white noise with zero mean and variance $\sigma^2$.
$A,C \in \mathbb{R}^{n \times n}$ and $B,D \in \mathbb{R}^{n \times m}$, the
coefficient matrices are given deterministic.
The individual cost function of player $\mathcal{A}_{j}$ is given by
\begin{equation}
    \begin{aligned}
        J_i(u)=&\ \sum_{k=0}^T \mathbb{E}\Big( \big| x^j_k-\Gamma x^{(N)}_k\big|^2_{Q}+\big|u^j_k\big|^2_{R}\Big)\\
        &+ \mathbb{E} \big|x^j_{T+1}-\Gamma x^{(N)}_{T+1}\big|^2_{Q}, \nonumber
    \end{aligned}
\end{equation}
where $u \triangleq ((u^1)^T, \dots, (u^N)^T)^T$,
and  
$Q, R, \Gamma$ are constant matrices with appropriate dimensions. 
$Q$ and $R$ are not limited to be positive semi-definite. 
$x^{(N)}_k=\frac{1}{N}\sum_{j=1}^N x^j_k$ is the mean field term.
The social cost function is given by 
\begin{equation}
    J_{\mathrm{soc}} (u) = \sum_{j=1}^NJ_j(u). \label{e1_2}
\end{equation} 
The admissible decentralized control set for the player $\mathcal{A}_{j}$ is given by
\begin{equation}
    \mathcal{U}_{ad}^j=\{u^j|u_k^j \; \text{is adapted to} \; \mathcal{F}_{k}^j,  \; \sum_{k=0}^T \mathbb{E}|u_k^j|^2<\infty\},  \label{e1_3}
\end{equation}
where $\mathcal{F}_k^j$ is the $\sigma$-field generated by $\{w_0^j,w_1^j,\ldots,w_{k-1}^j\}$.
A set of strategies $\{\hat{u}^{j}, 1\leq j \leq N \}$ is said to be a
decentralized social optimal control if the following holds: 
\begin{equation}
    J_{\text{soc}} (\hat{u})=\inf_{u^j\in\mathcal{U}_{ad}^j,j\in\mathbb{N}}J_{\text{soc}} (u), \label{e1_4}
\end{equation}
where 
$\hat{u} \triangleq ((\hat{u}^1)^T, \dots, $ $(\hat{u}^N)^T)^T.$ 
Besides, define the individual cost function  in infinite horizon  
    \begin{equation}
        \begin{aligned}
            J_j^{\infty}(u^j,u^{-j})=&\ \sum_{k=0}^\infty \mathbb{E}\Big( \big| x^j_k-\Gamma x^{(N)}_k\big|^2_{Q}+\big|u^j_k\big|^2_{R}\Big).  \label{problem_infinite}
        \end{aligned} 
    \end{equation}

In this paper, we mainly study the following problems.

\begin{prob}
    (Finite-horizon problem)
    For each player $\mathcal{A}_{i}$ with the state equation \eqref{e1_1} associated the   cost function \eqref{e1_2}, 
    find a decentralized social optimal control $\{\hat{u}^{j}, 1\leq j \leq N  \}$ 
    such that \eqref{e1_4} holds for any $x_{j0}\in\mathbb{R}^{n}$, without knowing players' dynamics (Parameters $A,B,C,D$ are unknown).
\end{prob}

\begin{prob}
    (Infinite-horizon problem) 
    For each player $\mathcal{A}_{i}$ with the state equation \eqref{e1_1} associated the   cost function $J_{\mathrm{soc}}^{\infty  }(u) = \sum_{j=1}^NJ_j^{\infty}(u^j,u^{-j})$, 
    the goal is to 
     find a decentralized social optimal control $\{\hat{u}^{j}, 1\leq j \leq N  \}$ 
    to optimize the cost function, without knowing players' dynamics.  
\end{prob}

The related concepts of stabilizability (as defined in \cite{ni2015indefinite}) are introduced as follows.
We first consider the following system:
\begin{equation}
\begin{aligned}
     y(k&+1)=  (A+BK) y(k) + (B\bar{K}-BK) \mathbb{E}(y(k)) \\
     & + [(C+DK) y(k) + (D\bar{K}-DK) \mathbb{E}(y(k))]  w(k).
\end{aligned}    
 \label{e_def_1}
\end{equation} 

\begin{defn}
    System \eqref{e_def_1} is called closed-loop $L^2-$ stabilizable  if there exists a pair $(K,\bar{K})$
    such that for any initial state $x_0$, 
    the closed loop of system \eqref{e_def_1} is $L^2-$stable.
\end{defn}

\section{Main results for Problem 1}
In this section, we  develop a model-free method to 
approximate the solution of AREs \eqref{e1_5} and \eqref{e1_6}.
First, we design two model-based policy iteration algorithms to solve AREs \eqref{e1_5} and \eqref{e1_6}.
Then, by employing the model-free method, the reliance of system dynamics can be avoided.

\subsection{Model-based policy iteration}

We now introduce the following definition and lemma, which can be referred to \cite{Liang2024Discrete}.

\begin{defn} \label{linearly_solvable}
    Problem 1 is linearly solvable if Problem 1 admits a decentralized social optimal control with the linear structure
    $u_k^j = K_{k} x_k^j + \bar{K}_k \mathbb{E}[x_k^j] + \eta_k.$
\end{defn}

\begin{lem}  \cite[Theorem 4.5]{Liang2024Discrete}
    Problem 1 is linearly solvable if and only if 
    AREs \eqref{e1_5}-\eqref{e1_6} admit solutions $P_k$, $\bar{P_k}$
    such that the regular conditions \eqref{e1_regular_conditions} hold. 
   In this case, the decentralized social optimal control can be represented as
    \begin{equation}
        \begin{aligned} 
        u_k^j = -  {\Upsilon}_{k}^{-1} M_{k} (x_k^j-\bar{x}_k^j)    
        -  \bar{\Upsilon}_{k}^{-1} \bar{M}_{k}  \bar{x}_k^j, 
         \end{aligned}
    \end{equation}
    where
    \begin{equation}
        \begin{aligned}
 &    {\Upsilon}_{k} \triangleq R  + B^T  {P}_{k+1} B + \sigma^2 D^T  {P}_{k+1} D, \\
 &    M_{k}  \triangleq B^T  {P}_{k+1} A + \sigma^2 D^T  {P}_{k+1} C,\\
 &    \bar{\Upsilon}_{k} \triangleq \Upsilon_{k} + B^T \bar{P}_{k+1} B,\\
 &    \bar{M}_{k} \triangleq M_{k} + B^T \bar{P}_{k+1} A.
        \end{aligned} \nonumber
    \end{equation}
     
\begin{equation}\medmath{
\left\{
\begin{aligned}
    &P_{k}=Q+\frac{1}{N}(\Gamma^TQ\Gamma-Q\Gamma-\Gamma^TQ)+A^TP_{k+1}A \\
    &+\sigma^2C^TP_{k+1}C  -(B^TP_{k+1}A+\sigma^2D^TP_{k+1}C)^T \\
    &\times   (R+B^TP_{k+1}B+\sigma^2D^TP_{k+1}D)^{-1} \\
    &\times  (B^TP_{k+1}A+\sigma^2D^TP_{k+1}C), \\
    &P_{T+1}=Q+\frac{1}{N}(\Gamma^TQ\Gamma-Q\Gamma-\Gamma^TQ).
\end{aligned}    
\right.} \label{e1_5}
\end{equation}
\begin{equation} \medmath{  
\left\{     
\begin{aligned} 
    &\bar{P_{k}}=\frac{N-1}{N}(\Gamma^TQ\Gamma-Q\Gamma-\Gamma^TQ)+A^T\bar{P}_{k+1}A \\
    &+(B^TP_{k+1}A+\sigma^2D^TP_{k+1}C)^T(R+B^TP_{k+1}B \\
    &+\sigma^2D^TP_{k+1}D)^{-1}(B^TP_{k+1}A+\sigma^2D^TP_{k+1}C)\\
    &-(B^TP_{k+1}A+\sigma^2D^TP_{k+1}C+B^T\bar{P}_{k+1}A)^T \\
    &\times (R+B^TP_{k+1}B+\sigma^2D^TP_{k+1}D+B^T\bar{P}_{k+1}B)^{-1} \\
    &\times  (B^TP_{k+1}A+\sigma^2D^TP_{k+1}C+B^T\bar{P}_{k+1}A), \\
    &\bar{P}_{T+1}= \frac{N-1}{N}(\Gamma^TQ\Gamma-Q\Gamma-\Gamma^TQ).
\end{aligned}
\right.} \label{e1_6}
\end{equation}
\begin{equation} \medmath{ 
\left\{
\begin{aligned}
    & {\Upsilon}_{k} \geq 0, \\
    & \bar{\Upsilon}_{k} \geq 0, \\
    & [I - {\Upsilon}_{k} {\Upsilon}_{k}^{T}] M_{k}=0, \\
    & [I - \bar{\Upsilon}_{k} \bar{\Upsilon}_{k}^{T}] \bar{M}_{k}=0.
\end{aligned}    
\right.} \label{e1_regular_conditions}
\end{equation} 
\end{lem}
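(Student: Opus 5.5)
The plan is to decompose each agent's state into a fluctuation part and a mean part, and reduce Problem 1 to two decoupled indefinite mean-field LQ problems handled by standard discrete-time completion of squares.

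\emph{Setup.} Write $\bar x^j_k=\mathbb{E}[x^j_k]$ and $y^j_k=x^j_k-\bar x^j_k$. Under decentralized controls adapted to $\mathcal{F}^j_k$, the states $x^j$ are mutually independent. Hence
\[
\mathbb{E}\,x^{(N)}_k=\frac1N\sum_j \bar x^j_k ,
\]
and the cross terms $\mathbb{E}[(y^j)^T Q' y^l]$ vanish for $j\neq l$.

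\emph{Step 1: rewrite the social cost.} Expand $\sum_j |x^j-\Gamma x^{(N)}|_Q^2$. A direct computation gives two pieces.
\begin{itemize}
\item Each agent's fluctuation contributes $|y^j|^2_{Q+\frac1N(\Gamma^TQ\Gamma-Q\Gamma-\Gamma^TQ)}$.
\item The means contribute $\sum_j|\bar x^j|_Q^2$ plus $N^{-1}$ times a quadratic form in $\sum_j\bar x^j$ with weight $\Gamma^TQ\Gamma-Q\Gamma-\Gamma^TQ$.
\end{itemize}
Because the agents are homogeneous, and because of the structure of the claimed optimal strategy, the mean part is effectively a per-agent cost in $\bar x^j$ with weight
\[
Q+\Gamma^TQ\Gamma-Q\Gamma-\Gamma^TQ=P_{T+1}+\bar P_{T+1}.
\]
This explains the terminal conditions in \eqref{e1_5}--\eqref{e1_6}.

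\emph{Step 2: sufficiency.} Assume \eqref{e1_5}--\eqref{e1_6} admit solutions satisfying \eqref{e1_regular_conditions}. We complete the square twice.
\begin{itemize}
\item For the fluctuation, use $\mathbb{E}|y^j_{k+1}|^2_{P_{k+1}}-\mathbb{E}|y^j_k|^2_{P_k}$, taking the noise term $(Cx+Du)w$ into account. This produces the weight $A^TPA+\sigma^2C^TPC$ and gives
\[
\mathbb{E}\,|u^j_k-\bar u^j_k+\Upsilon_k^\dagger M_k y^j_k|^2_{\Upsilon_k}.
\]
\item For the mean, use $\bar x^T(P+\bar P)\bar x$. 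Here the multiplicative noise acts on the full $x=y+\bar x$, so $\sigma^2C^TPC$ also appears in the mean dynamics. This yields
\[
|\bar u^j_k+\bar\Upsilon_k^\dagger\bar M_k\bar x^j_k|^2_{\bar\Upsilon_k}.
\]
\item Subtracting the first recursion from the combined one recovers exactly the $\bar P$ recursion \eqref{e1_6}. The extra $+M^T\Upsilon^{-1}M$ term appears because the $P$-part gain term is already counted.
\end{itemize}
The regularity conditions, namely $\Upsilon,\bar\Upsilon\ge0$ and the range conditions (read as $\Upsilon\Upsilon^\dagger M=M$), guarantee two things. First, the cross terms are absorbed. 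Second, the residual is nonnegative. Consequently the cost equals $\sum_j |x_{j0}|^2_{P_0+\bar P_0}$ plus nonnegative squares, and it is attained by the stated control.

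\emph{Step 3: necessity, the main obstacle.} Suppose a linear decentralized optimum exists. Restrict attention to the dynamics of the mean and of the fluctuation separately. This gives two indefinite stochastic LQ problems (one mean-field type) whose value functions are finite for every initial state. We then argue backward in time, following Ni et al. and the FBSDE characterization in \cite{Liang2024Discrete}.
\begin{itemize}
\item The stationarity condition of the FBSDE at each $k$, combined with a decoupling ansatz $p_k=P_ky_k+(P_k+\bar P_k)\bar x_k$, forces the first-order condition $\Upsilon_k u+M_k y=0$.
\item Solvability of this condition for all $y$ gives the range condition.
\item Optimality of the second variation gives $\Upsilon_k\ge0$.
\end{itemize}
The delicate point is the indefinite case. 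There, a convex-but-degenerate $\Upsilon_k$ requires pseudo-inverses, and one has to show that both the decoupling and the Riccati recursion propagate with $\Upsilon^{-1}$ replaced by $\Upsilon^\dagger$. I would handle this exactly as in the indefinite LQ literature: use arbitrary perturbations of $u$ at a single time $k$ to extract the necessary conditions, then induct backward from $T$.
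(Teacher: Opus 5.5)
The paper gives no proof of this lemma; it imports it from \cite[Theorem 4.5]{Liang2024Discrete}. Several parts of your proposal are correct:
\begin{itemize}
\item the split into fluctuation and mean;
\item the per-agent fluctuation weight $Q+\frac1N G$, where $G\triangleq\Gamma^TQ\Gamma-Q\Gamma-\Gamma^TQ$;
\item the remark that $\sigma^2C^TP_{k+1}C$ enters the mean recursion.
\end{itemize}
The gap is in Step~1, and it breaks Step~2. With $m_k\triangleq\frac1N\sum_j\bar x^j_k$, the exact identity is
\[
\sum_{j=1}^N\big|\bar x^j_k-\Gamma m_k\big|_Q^2=\sum_{j=1}^N\big|\bar x^j_k\big|^2_{Q+G}-\sum_{j=1}^N\big|\bar x^j_k-m_k\big|^2_G .
\]
So the mean cost is a per-agent cost with weight $Q+G$ only when all the means coincide. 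Homogeneous dynamics do not force this. For sufficiency you must beat every $u\in\prod_j\mathcal{U}^j_{ad}$, whose components are chosen independently, so appealing to ``the structure of the claimed optimal strategy'' to drop the last sum is circular. Your two completions of squares actually give
\[
J_{\mathrm{soc}}(u)=\sum_j|x_{j0}|^2_{P_0+\bar P_0}+(\text{nonnegative squares})-\sum_{k=0}^{T+1}\sum_j|\bar x^j_k-m_k|^2_G ,
\]
and the last term has no sign when $Q$ is indefinite.

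This cannot be repaired from \eqref{e1_5}--\eqref{e1_regular_conditions} alone, because the dropped term is a third LQ subproblem. Take two agents with a common initial state and perturb them antisymmetrically: $u^1=\hat u^1+\nu$, $u^2=\hat u^2-\nu$, with $\nu$ deterministic and hence admissible. Then $x^{(N)}$ is unchanged. For $C=D=0$ the social cost changes by exactly $2\sum_k(|\delta_k|^2_Q+|\nu_k|^2_R)$, where $\delta_{k+1}=A\delta_k+B\nu_k$ and $\delta_0=0$. This is an LQ problem with weight $Q$ itself, not $Q+\frac1N G$ or $Q+G$.

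A concrete instance: take $N=2$, $T\ge2$, $A=I_2$, $B=(0,1)^T$, $C=D=0$, $R=1$, $Q=\mathrm{diag}(1,-\frac12)$ and $I-\Gamma=\begin{bmatrix}2&2\\0&0\end{bmatrix}$.
\begin{itemize}
\item Here $Q+G=(I-\Gamma)^TQ(I-\Gamma)\ge0$ and $Q+\frac12G>0$, so both recursions are regular.
\item Yet a single pulse $\nu_0=c$ changes $J_{\mathrm{soc}}$ by $(1-T)c^2$, which is unbounded below.
\end{itemize}

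Your argument therefore needs hypotheses that the statement as transcribed omits:
\begin{itemize}
\item Identical initial means. Otherwise $\bar x^j-m$ is governed by a different Riccati recursion, and the displayed feedback is in general not optimal.
\item Nonnegativity of the deviation subproblem. For example, $G\le0$ suffices, since the extra term is then $\ge0$ and vanishes under $\hat u$. Alternatively, require regularity of a recursion like \eqref{e1_7} with $Q+G$ replaced by $Q$.
\end{itemize}
Your necessity outline, using symmetric and zero-mean single-agent perturbations, is reasonable. The failure is on the sufficiency side.
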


We now make the following assumption.
\begin{assum} \label{ass_are_solution}
    The AREs \eqref{e1_5}-\eqref{e1_6} admit a regular solution.
\end{assum}

For notational simplicity, let
$K_{k} \triangleq -{\Upsilon}_{k}^{-1}M_{k}$,
$\bar{K}_{k} \triangleq -\bar{\Upsilon}_{k}^{-1} \bar{M}_{k}$.

\begin{thm} \label{thm1}
    Assume that the ARE \eqref{e1_5} exists a regular solution.
    $P_{k}^i$ and $K_{k}^{i+1}$ are recursively updated by \eqref{th1_1}-\eqref{th1_2}, 
    \begin{equation}
        \begin{aligned} 
            P_{k}^i=& Q+\frac{1}{N}(\Gamma^TQ\Gamma-Q\Gamma-\Gamma^TQ)
            +{(A_{k}^i)}^TP_{k+1}^iA_{k}^i \\
            & +\sigma^2{(C_{k}^i)}^TP_{k+1}^iC_{k}^i+{(K_{k}^i)}^TRK_{k}^i, 
        \end{aligned} \label{th1_1}
    \end{equation}
    \begin{equation}
        \begin{aligned} 
            K_{k}^{i+1}=& -(R+B^TP_{k+1}^iB+\sigma^2 D^TP_{k+1}^iD)^{-1} \\
            &\times  (B^TP_{k+1}^iA+\sigma^2 D^TP_{k+1}^iC),
        \end{aligned} \label{th1_2}
    \end{equation}
    where 
    $A_k^i \triangleq A+BK_k^i$, $C_k^i \triangleq C+DK_k^i$,
    and $P^i_{T+1} \equiv  Q+\frac{1}{N}(\Gamma^TQ\Gamma-Q\Gamma-\Gamma^TQ)$. 
    Then  for any 
$k  \in [0,T]$,
$P_{k} \leq P_{k}^{i+1} \leq P_{k}^i$, 
$\lim\limits_{i\to+\infty} P_{k}^i = P_{k}$.
\end{thm}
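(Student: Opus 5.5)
Throughout, write $\hat Q \triangleq Q+\frac{1}{N}(\Gamma^TQ\Gamma-Q\Gamma-\Gamma^TQ)$, and let $\Upsilon_k^i \triangleq R+B^TP^i_{k+1}B+\sigma^2D^TP^i_{k+1}D$ and $M_k^i \triangleq B^TP^i_{k+1}A+\sigma^2D^TP^i_{k+1}C$. Then \eqref{th1_2} reads $K_k^{i+1}=-(\Upsilon_k^i)^{-1}M_k^i$.

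The basic tool is a completion-of-squares identity. For any gain $K$ and any symmetric $P$,
\begin{equation*}
\begin{aligned}
&\hat Q+(A+BK)^TP(A+BK)+\sigma^2(C+DK)^TP(C+DK)+K^TRK\\
&\quad=\hat Q+A^TPA+\sigma^2C^TPC-M^T\Upsilon^{-1}M+(K+\Upsilon^{-1}M)^T\Upsilon(K+\Upsilon^{-1}M),
\end{aligned}
\end{equation*}
where $\Upsilon$ and $M$ are formed from $P$. This is valid whenever $\Upsilon$ is invertible; there is no sign condition on $Q$ or $R$. The right-hand side is the Riccati map of \eqref{e1_5} evaluated at $P$, plus a term that is nonnegative as long as $\Upsilon\ge 0$.

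The proof is a backward induction on $k$ from $T+1$ down to $0$, run inside an induction on $i$. The base step $k=T+1$ is trivial, since $P^i_{T+1}=P_{T+1}$ for every $i$.

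\textbf{Lower bound $P_k\le P_k^i$.} Suppose $P_{k+1}\le P^i_{k+1}$. Apply the identity at $P=P^i_{k+1}$ and $K=K^i_k$. This gives $P_k^i=\mathcal R(P^i_{k+1})+(K^i_k-K^{i+1}_k)^T\Upsilon^i_k(K^i_k-K^{i+1}_k)$, where $\mathcal R$ denotes the Riccati map of \eqref{e1_5}. Next I use that $\mathcal R$ is monotone on the set where $\Upsilon\ge 0$. The reason is that $\mathcal R(P)=\min_K\{\hat Q+(A+BK)^TP(A+BK)+\sigma^2(C+DK)^TP(C+DK)+K^TRK\}$, a pointwise infimum of maps that are affine and increasing in $P$. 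From this, $\mathcal R(P^i_{k+1})\ge \mathcal R(P_{k+1})=P_k$. Moreover, $\Upsilon^i_k\ge\Upsilon_k\ge0$, because $\Upsilon$ is increasing in $P$ and the regular conditions \eqref{e1_regular_conditions} hold for the true solution. So $P^i_k\ge P_k$. The invertibility of $\Upsilon^i_k$, which \eqref{th1_2} needs, also comes from $\Upsilon^i_k\ge\Upsilon_k$ together with Assumption~\ref{ass_are_solution}: I read "regular" here as $\Upsilon_k>0$, since \eqref{th1_2} uses an inverse.

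\textbf{Monotonicity $P^{i+1}_k\le P^i_k$.} Take the difference $\Delta_k \triangleq P^i_k-P^{i+1}_k$. Using the identity with $K=K^{i+1}_k$ and $P=P^i_{k+1}$, I get the recursion
\begin{equation*}
\Delta_k=(A^{i+1}_k)^T\Delta_{k+1}A^{i+1}_k+\sigma^2(C^{i+1}_k)^T\Delta_{k+1}C^{i+1}_k+(K^i_k-K^{i+1}_k)^T\Upsilon^i_k(K^i_k-K^{i+1}_k),
\end{equation*}
with $\Delta_{T+1}=0$. By backward induction $\Delta_k\ge0$, because $\Upsilon^i_k\ge0$ by the previous step.

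\textbf{Convergence.} For each $k$, the sequence $\{P^i_k\}_i$ is monotonically nonincreasing and bounded below by $P_k$, so it converges to some $P^\infty_k$. The Riccati-type updates are continuous, and $\Upsilon$ stays invertible in the limit because $\Upsilon^\infty_k\ge\Upsilon_k>0$. Hence $K^i_k\to K^\infty_k=-(\Upsilon^\infty_k)^{-1}M^\infty_k$, and passing to the limit in \eqref{th1_1} shows that $P^\infty$ satisfies \eqref{e1_5} with the same terminal value. A finite-horizon recursion has a unique solution given $P_{T+1}$, so $P^\infty_k=P_k$.

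\textbf{Main obstacle.} The delicate point is the indefiniteness of $Q$ and $R$. All the sign arguments rest on $\Upsilon^i_k\ge0$ and on its invertibility, which come only from comparison with the regular solution via $P^i_{k+1}\ge P_{k+1}$. So the lower bound has to be established jointly in the same backward induction, before monotonicity is invoked.

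The starting gain $K^0_k$ needs separate treatment. For $i=0$ the lower bound does not follow from the minimization argument. I would handle this by proving the lower bound for $i\ge1$ only, since $P^1$ is built from the gain $K^1$, which is itself a minimizer. Alternatively, one can assume $K^0$ is admissible in the sense that $P^0_k\ge P_k$.
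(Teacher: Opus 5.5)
Your argument is correct, but the closing paragraph about the initial gain is a false alarm, and you should drop the remedy you propose there. Your lower-bound step never uses that $K^i_k$ is a minimizer, because the completion-of-squares identity holds for an arbitrary gain. At $i=0$, the backward hypothesis $P^0_{k+1}\ge P_{k+1}$ (starting from $P^0_{T+1}=P_{T+1}$) gives $\Upsilon^0_k\ge\Upsilon_k>0$. Then $P^0_k=\mathcal{R}(P^0_{k+1})+(K^0_k-K^1_k)^T\Upsilon^0_k(K^0_k-K^1_k)\ge\mathcal{R}(P_{k+1})=P_k$ for any $K^0_k$.

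Restricting the lower bound to $i\ge1$ would actually hurt. It is exactly the $i=0$ bound that makes $\Upsilon^0_k$ invertible, so that $K^1_k$ exists. It also makes $\Upsilon^0_k$ nonnegative, which is what gives $P^1_k\le P^0_k$, part of the claim. Your stated rationale does not apply anyway: $K^1_k$ minimizes with respect to $P^0_{k+1}$, not $P^1_{k+1}$. Assuming $P^0_k\ge P_k$ would add a hypothesis the theorem does not need.

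Your monotonicity recursion is exactly \eqref{pf1_1}. For the lower bound, the paper completes the square around the true $P_{k+1}$ rather than around $P^i_{k+1}$, which gives \eqref{pf1_1_2}. Write $\mathcal{T}_K(X)\triangleq\hat Q+(A+BK)^TX(A+BK)+\sigma^2(C+DK)^TX(C+DK)+K^TRK$ for the left side of your identity. Then the paper's argument is the one-line chain $P^i_k=\mathcal{T}_{K^i_k}(P^i_{k+1})\ge\mathcal{T}_{K^i_k}(P_{k+1})\ge\mathcal{R}(P_{k+1})=P_k$.

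That route needs only $\Upsilon_k\ge0$ from regularity. It uses neither a completion of squares at $P^i_{k+1}$ nor monotonicity of $\mathcal{R}$, and it makes plain that the initial gain is arbitrary. Your detour through $\mathcal{R}(P^i_{k+1})$ is longer, but it gives a correctly ordered induction. Establishing $P^i_{k+1}\ge P_{k+1}$ first is what justifies $\Upsilon^i_k\ge0$ and its invertibility. The paper merely asserts this, because it proves monotonicity before the lower bound.

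Your identification of the limit is also the right argument: uniqueness of the backward recursion with a fixed terminal value. The paper instead appeals to a ``unique positive semi-definite solution,'' which is not appropriate when $Q$ and $R$ are indefinite.
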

\emph{Proof.} Since
$A_{k}^{i}=A_{k}^{i+1}+B(K_{k}^{i}-K_{k}^{i+1})$,
$C_{k}^{i}=C_{k}^{i+1}+D(K_{k}^{i}-K_{k}^{i+1})$,
$K_{k}^{i}=(K_{k}^{i}-K_{k}^{i+1})+K_{k}^{i+1}$,
we have
\begin{equation}
    \begin{aligned} 
        P_{k}^{i}-P_{k}^{i+1}=&(K_{k}^{i}-K_{k}^{i+1})^T (R+B^TP_{k+1}^iB \\
        & +\sigma^2 D^TP_{k+1}^iD)(K_{k}^{i}-K_{k}^{i+1}) \\ 
        &+{A_{k}^{i+1}}^T(P_{k+1}^{i}-P_{k+1}^{i+1})A_{k}^{i+1} \\
        &+\sigma^2{C_{k}^{i+1}}^T(P_{k+1}^{i}-P_{k+1}^{i+1})C_{k}^{i+1}. 
    \end{aligned} \label{pf1_1}
\end{equation}
Noting that 
$P_{T+1}^{i}=P_{T+1}^{i+1}=Q+\frac{1}{N}(\Gamma^TQ\Gamma-Q\Gamma-\Gamma^TQ)$
$R+B^TP_{k+1}^iB+\sigma^2 D^TP_{k+1}^iD \geq 0$, 
one has for all $i,k \in \mathbb{N}^{+}$, 
$P_{k+1}^{i}-P_{k+1}^{i+1} \geq 0$.
Combining \eqref{th1_1} with \eqref{e1_5}, we have
\begin{equation}
    \begin{aligned} 
        P_{k}^{i+1}-P_{k}=&(K_{k}-K_{k}^{i+1})^T (R+B^TP_{k+1}B \\
        & +\sigma^2 D^TP_{k+1}D)(K_{k}-K_{k}^{i+1}) \\ 
        &+{A_{k}^{i+1}}^T(P_{k+1}^{i+1}-P_{k+1})A_{k}^{i+1} \\
        &+\sigma^2{C_{k}^{i+1}}^T(P_{k+1}^{i+1}-P_{k+1})C_{k}^{i+1}. 
    \end{aligned} \label{pf1_1_2}
\end{equation}
Therefore, the inequality $P_{k}^{i+1}-P_{k} \geq 0$ holds. 
This implies that the sequence $\{ P_{k}^i \}_{i=0}^{\infty}$ is monotonically decreasing and has a lower bound. 
The limit of the sequence $\{ P_{k}^i \}_{i=0}^{\infty}$ exists.

Letting $i \to \infty$ in \eqref{th1_1} and \eqref{th1_2}, we have
\begin{equation}
    \begin{aligned} 
        P_{k}^{\infty}=&Q+\frac{1}{N}(\Gamma^TQ\Gamma-Q\Gamma-\Gamma^TQ) \\
        &+{A_{k}^{\infty}}^TP_{k+1}^{\infty}A_{k}^{\infty}+{K_{k}^{\infty}}^TRK_{k}^{\infty} \\
        &+\sigma^2{C_{k}^{\infty}}^TP_{k+1}^{\infty}C_{k}^{\infty},
    \end{aligned} \label{pf1_2}
\end{equation}
\begin{equation}
    \begin{aligned} 
        K_{k}^{\infty}=-(R+B^TP_{k+1}^{\infty}B+\sigma^2 D^TP_{k+1}^{\infty}D)^{-1} \\
        \times (B^TP_{k+1}^{\infty}A+\sigma^2 D^TP_{k+1}^{\infty}C).
    \end{aligned} \label{pf1_3}
\end{equation}
Eliminating $K_{k}^{\infty}$ from the above equations, one has
\begin{equation}
    \begin{aligned} 
             P_{k}^{\infty}=&Q+\frac{1}{N}(\Gamma^TQ\Gamma-Q\Gamma-\Gamma^TQ) \\
            & +{A}^TP_{k+1}^{\infty}A+\sigma^2{C}^TP_{k+1}^{\infty}C \\
            & -(B^TP_{k+1}^{\infty}A
            +\sigma^2D^TP_{k+1}^{\infty}C)^T \\ 
            &\times  (R+B^TP_{k+1}^{\infty}B
            +\sigma^2D^TP_{k+1}^{\infty}D)^{-1} \\
            &\times  (B^TP_{k+1}^{\infty}A+\sigma^2D^TP_{k+1}^{\infty}C).      
    \end{aligned} \label{pf1_4}
\end{equation}
Since 
$P_{T+1}^{\infty}=P_{T+1}=Q+\frac{1}{N}(\Gamma^TQ\Gamma-Q\Gamma-\Gamma^TQ)$
and the ARE \eqref{e1_5} has a unique positive semi-definite solution, 
we have 
$P_{k}^{\infty}=P_{k}$.
{{\hfill{$\Box$}}}

\begin{rem}
    To obtain the optimal control gain and the solution of the ARE \eqref{e1_5},
   the iteration \eqref{th1_1}-\eqref{th1_2} need complete knowledge of the system dynamics. 
   When the system is a single-agent and deterministic system, 
   the above iteration degenerates into the method in 
   \cite{Lewis2011Reinforcement}.
\end{rem}

\begin{rem}
    Most existing numerical methods are developed for continuous-time, infinite-horizon or large-population systems.
    For other situations, 
    it remains uncertain how to transform the model-based approach into model-free one. 
    Based on Theorem \ref{thm1}, a corresponding model-free off-policy algorithm can be deduced.
\end{rem}

Combining \eqref{e1_5} with \eqref{e1_6}, one has
\begin{equation} 
\begin{aligned} 
S_k =& Q + \Gamma ^ T Q \Gamma - Q \Gamma - \Gamma ^ T Q + \bar{A}_k^T S_{k+1} \bar{A}_k \\
& + \sigma^2 \bar{C}_k^T P_{k+1} \bar{C}_k + \bar{K}_k^T R \bar{K}_k,
\end{aligned}   \label{e1_7}
\end{equation}
where 
$S_{k} \triangleq P_k + \bar{P}_k$, $\bar{A}_k \triangleq A+B \bar{K}_k$ and $\bar{C}_k \triangleq C+D \bar{K}_k$.

\begin{thm} \label{thm2}
    Assume that AREs \eqref{e1_5}-\eqref{e1_6} exist a regular solution.
    $S_k^i$ and $\bar{K}_{k}^{i+1}$ are recursively updated by \eqref{th2_1}-\eqref{th2_2}, 
\begin{equation}   
\begin{aligned}
S_k^i =& Q + \Gamma ^ T Q \Gamma - Q \Gamma - \Gamma ^ T Q + {{}(\bar{A}^i_k)}^{T} S_{k+1}^i {\bar{A}_k^i} \\
&+ \sigma^2 {{}(\bar{C}^i_k)}^{T} P_{k+1} \bar{C}_k^i + {{}(\bar{K}^i_k)}^{T} R \bar{K}_k^i, 
\end{aligned}   \label{th2_1}
\end{equation}
\begin{equation}   
\begin{aligned}
\bar{K}_k^{i+1} = - R^{-1} (B^TS_{k+1}^i  {\bar{A}_k^{i+1}} + \sigma^2 D^T P_{k+1} {\bar{C}_k^{i+1}}),
\end{aligned}   \label{th2_2}
\end{equation}
where 
$S^i_{T+1}\equiv  Q+ \Gamma^TQ\Gamma-Q\Gamma-\Gamma^TQ $,
and 
$P_k$ is the converged solution obtained from Theorem \ref{thm1} .
Then  for any 
$k  \in [0,T]$,
$S_{k} \leq S_{k}^{i+1} \leq S_{k}^i$, 
$\lim\limits_{i\to+\infty} S_{k}^i = S_{k}$ and  $\lim\limits_{i\to+\infty} \bar{P}^i_k = \bar{P}_k$.
\end{thm}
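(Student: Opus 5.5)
The plan is to mirror the proof of Theorem \ref{thm1}, working with the combined quantity $S_k=P_k+\bar P_k$ and the recursion \eqref{e1_7}, with $P_{k+1}$ treated as a known, fixed coefficient supplied by Theorem \ref{thm1}.

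\textbf{Step 1: make the update explicit.} The update \eqref{th2_2} is implicit, since $\bar K_k^{i+1}$ also appears inside $\bar A_k^{i+1}$ and $\bar C_k^{i+1}$. Moving those terms to the left-hand side gives
\begin{equation*}
\bar K_k^{i+1}=-(W_k^i)^{-1}\bigl(B^TS_{k+1}^iA+\sigma^2D^TP_{k+1}C\bigr),\qquad W_k^i\triangleq R+B^TS_{k+1}^iB+\sigma^2D^TP_{k+1}D .
\end{equation*}
At the fixed point, $W_k=R+B^TS_{k+1}B+\sigma^2D^TP_{k+1}D=\bar\Upsilon_k$, and $\bar K_k=-\bar\Upsilon_k^{-1}\bar M_k$. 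I take the invertibility of $W_k^i$ for granted, in the same way Theorem \ref{thm1} assumes it for its inverse.

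\textbf{Step 2: two completion-of-squares identities.} Subtracting \eqref{e1_7} from \eqref{th2_1}, and using $\bar A_k^{i+1}=A+B\bar K_k^{i+1}$ together with the optimality of $\bar K_k$, gives for every $i\ge 0$
\begin{equation*}
S_k^{i}-S_k=(\bar K_k^{i}-\bar K_k)^T\bar\Upsilon_k(\bar K_k^{i}-\bar K_k)+(\bar A_k^{i})^T(S_{k+1}^{i}-S_{k+1})\bar A_k^{i}.
\end{equation*}
The $\sigma^2\bar C^TP\bar C$ terms are absorbed into $\bar\Upsilon_k$, because $P_{k+1}$ is the same on both sides. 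Similarly, writing $\bar A_k^i=\bar A_k^{i+1}+B(\bar K_k^i-\bar K_k^{i+1})$ and $\bar C_k^i=\bar C_k^{i+1}+D(\bar K_k^i-\bar K_k^{i+1})$, and using that $\bar K_k^{i+1}$ minimizes the quadratic form built from $S^i_{k+1}$, gives
\begin{equation*}
S_k^i-S_k^{i+1}=(\bar K_k^i-\bar K_k^{i+1})^TW_k^i(\bar K_k^i-\bar K_k^{i+1})+(\bar A_k^{i+1})^T(S_{k+1}^i-S_{k+1}^{i+1})\bar A_k^{i+1}.
\end{equation*}

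\textbf{Step 3: joint backward induction in $k$.} At $k=T+1$ all of $S^i_{T+1}$ and $S_{T+1}$ coincide. For the inductive step:
\begin{itemize}
\item The first identity, together with $\bar\Upsilon_k\ge 0$ from the regular conditions \eqref{e1_regular_conditions}, gives $S_k^i\ge S_k$ for all $i$ (including $i=0$).
\item Consequently $W_k^i\ge \bar\Upsilon_k\ge 0$.
\item The second identity then gives $S_k^{i+1}\le S_k^i$.
\end{itemize}

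This step is the main obstacle. Because $Q$ and $R$ are indefinite, $W_k^i$ has no a priori sign, so monotonicity cannot be proved on its own. The point is to establish the lower bound first and use it to sign $W_k^i$, inducting on both statements simultaneously.

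\textbf{Step 4: passage to the limit.} Each $\{S_k^i\}_i$ is monotonically decreasing and bounded below, so it converges; then $\bar K_k^i$ converges through the explicit formula of Step 1. Letting $i\to\infty$ in \eqref{th2_1}--\eqref{th2_2} and eliminating $\bar K_k^\infty$ shows that $S_k^\infty$ satisfies the same backward recursion as $S_k$, with the same terminal value. That recursion is a deterministic backward map once $P_{k+1}$ is fixed, so $S_k^\infty=S_k$.

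Finally, define $\bar P_k^i\triangleq S_k^i-P_k$. Then $\bar P_k^i\to S_k-P_k=\bar P_k$.
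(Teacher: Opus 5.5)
Your proposal is correct and follows essentially the paper's route: the same two completion-of-squares identities (the paper's \eqref{pf2_2} and \eqref{pf2_2_1}, yours index-shifted so the lower bound also covers $i=0$), monotone-bounded convergence, and passage to the limit in \eqref{th2_1}--\eqref{th2_2}. Where you improve on the paper is in justification. Your backward induction derives $R+B^TS_{k+1}^iB+\sigma^2D^TP_{k+1}D\ge\bar\Upsilon_k\ge 0$ from the lower bound, whereas the paper only asserts it. Your uniqueness step via the explicit backward map is also cleaner than the paper's appeal to a unique positive semidefinite solution.
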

\emph{Proof.} Since  
$\bar{A}_{k}^{i}=\bar{A}_{k}^{i+1}+B(\bar{K}_{k}^{i}-\bar{K}_{k}^{i+1})$,
$\bar{C}_{k}^{i}=\bar{C}_{k}^{i+1}+D(\bar{K}_{k}^{i}-\bar{K}_{k}^{i+1})$,
$\bar{K}_{k}^{i}=(\bar{K}_{k}^{i}-\bar{K}_{k}^{i+1})+\bar{K}_{k}^{i+1}$, 
we have
\begin{equation}  
\begin{aligned} 
    S_{k}^{i}&-S_{k}^{i+1}={{}(\bar{A}_{k}^{i})}^T S_{k+1}^{i} \bar{A}_{k}^{i} -{{}(\bar{A}_{k}^{i+1})}^T S_{k+1}^{i+1} \bar{A}_{k}^{i+1} \\
    &+\sigma^2{{}(\bar{C}_{k}^{i})}^T P_{k+1} \bar{C}_{k}^{i} - \sigma^2{{}(\bar{C}_{k}^{i+1})}^T P_{k+1} \bar{C}_{k}^{i+1} \\
    &+{(K_{k}^{i})}^T R {K_{k}^{i}} - {(K_{k}^{i+1})}^T R {K_{k}^{i+1}}.
\end{aligned}    \label{pf2_1}
\end{equation}
Combining the above equation with \eqref{th2_2}, one has
\begin{equation} 
\begin{aligned} 
    S_{k}^{i}-S_{k}^{i+1}=& {{}(\bar{A}_{k}^{i+1})}^T (S_{k+1}^{i} - S_{k+1}^{i+1}) \bar{A}_{k}^{i+1} \\
    &+ ({\bar{K}_{k}^{i}}-{\bar{K}_{k}^{i+1}})^T (R+B^TS_{k+1}^i B \\
    &+ \sigma^2 D^TP_{k+1}D) ({\bar{K}_{k}^{i}}-{\bar{K}_{k}^{i+1}}). 
\end{aligned}  \label{pf2_2}
\end{equation}
Noting that 
$S_{T+1}^{i}=S_{T+1}^{i+1}=P_{T+1}+\bar{P}_{T+1}= Q+(\Gamma^TQ\Gamma-Q\Gamma-\Gamma^TQ)$,
$R+B^TS_{k+1}^i B + \sigma^2 D^TP_{k+1}D \geq 0$, 
one has for all $i,k \in \mathbb{N}^{+}$,
$S_{k}^{i}-S_{k}^{i+1} \geq 0$.
Combining \eqref{e1_5}, \eqref{e1_6} with \eqref{th2_1}, we have
    \begin{equation} 
    \begin{aligned} 
        S_{k}^{i+1}-S_{k}=& {{}(\bar{A}_{k}^{i+1})}^T (S_{k+1}^{i+1} - S_{k+1}) \bar{A}_{k}^{i+1} \\
        &+ ({\bar{K}_{k}}-{\bar{K}_{k}^{i+1}})^T (R+B^TS_{k+1} B \\
        &+ \sigma^2 D^TP_{k+1}D) ({\bar{K}_{k}}-{\bar{K}_{k}^{i+1}}). 
    \end{aligned}  \label{pf2_2_1}
\end{equation}
Therefore, the inequality $S_{k}^{i+1}-S_{k} \geq 0$ holds. 
It implies that the sequence $\{ S_{k}^i \}_{i=0}^{\infty}$ is monotonically decreasing and has a lower bound. 
The limit of the sequence $\{ S_{k}^i \}_{i=0}^{\infty}$ exists.
Letting $i \to \infty$ in \eqref{th2_1} and \eqref{th2_2}, one has
\begin{equation} 
    \begin{aligned} 
        S_k^\infty =& Q + \Gamma ^ T Q \Gamma - Q \Gamma - \Gamma ^ T Q + {{}\bar{A}_k^\infty}^T S_{k+1} \bar{A}_k^\infty \\
        &+ \sigma^2 {{}\bar{C}_k^\infty}^T P_{k+1} \bar{C}_k^\infty + {{}\bar{K}_k^\infty}^T R \bar{K}_k^\infty. 
\end{aligned}  \label{pf2_3}
\end{equation}
Using the definition of $S_k$ yields
\begin{equation}  
    \begin{aligned} 
        P_k + \bar{P}_k^\infty=& Q + \Gamma ^ T Q \Gamma - Q \Gamma - \Gamma ^ T Q \\
        &+  A^T (P_{k+1}+\bar{P}_{k+1}^\infty)  A + \sigma^2  C^TP_{{k+1}}C  \\
        &- {{}(\bar{K}_{k}^{\infty})}^T \big[ R+B^T(P_{k+1}+\bar{P}_{k+1}^\infty) B \\
        &+ \sigma^2 D^TP_{k+1}D \big] {\bar{K}_{k}^{\infty}},
    \end{aligned}   \label{pf2_4}
\end{equation}
which can be written as
\begin{equation}    
    \begin{aligned} 
        &\bar{P}_{k}^\infty=\frac{N-1}{N}(\Gamma^TQ\Gamma-Q\Gamma-\Gamma^TQ)+A^T \bar{P}_{k+1}^\infty A \\
        &+(B^TP_{k+1}A+\sigma^2D^TP_{k+1}C)^T(R+B^TP_{k+1}B \\
        &+\sigma^2D^TP_{k+1}D)^{-1}(B^TP_{k+1}A+\sigma^2D^TP_{k+1}C)\\
        &-(B^TP_{k+1}A+\sigma^2D^TP_{k+1}C+B^T \bar{P}_{k+1}^\infty A)^T \\
        &\times (R+B^TP_{k+1}B+\sigma^2D^TP_{k+1}D+B^T \bar{P}_{k+1}^\infty B)^{-1} \\
        &\times  (B^TP_{k+1}A+\sigma^2D^TP_{k+1}C+B^T \bar{P}_{k+1}^\infty A).
    \end{aligned} 
\end{equation}
Since 
$\bar{P}_{T+1}= \bar{P}_{T+1}^\infty= \frac{N-1}{N}(\Gamma^TQ\Gamma-Q\Gamma-\Gamma^TQ)$
and the ARE \eqref{e1_6} has a unique positive semi-definite solution, we have 
$\bar{P}_{k}= \bar{P}_{k}^\infty$.
{{\hfill{$\Box$}}}

\subsection{Gain matrix approximation with unknown dynamics}

First, based on the model-based iterations \eqref{th1_1}-\eqref{th1_2}, 
we design an equivalent model-free iteration to approximate the control gains $\{K_k\}_{k=0}^T$.

For a fixed $1 \leq j \leq N$, we have
 \begin{equation} 
    \begin{aligned} 
        & \mathbb{E} \Big\{ x_j^T(k+1)P_{k}x_j(k+1)-x_j^T(k)P_{k-1}x_j(k) \Big \} \\
        =& \mathbb{E} \Big\{ (Ax_j(k)+Bu_j(k))^TP_{k}(Ax_j(k)+Bu_j(k)) \\
        &+\sigma^2(Cx_j(k)+Du_j(k))^TP_{k}(Cx_j(k)+Du_j(k)) \\
        &-x_j^T(k) P_{k-1}x_j(k) \Big\}\\
        =& \mathbb{E} \Big\{ -x_j^T(k) \big[ Q+\frac{1}{N}(\Gamma^TQ\Gamma-Q\Gamma-\Gamma^TQ)  \\
        &+K_{k-1}^TRK_{k-1} \big] x_j(k) \\
        & +2(u_j(k)-K_{k-1}x_j(k))^T \Lambda_{2}^kx_j(k) \\
        & +u_i^T(k)\Lambda_{1}^k u_j(k)-x_j^T(k)K_{k-1}^T\Lambda_{1}^kK_{k-1}x_j(k) \Big\}, 
    \end{aligned}  \label{e2_1}
\end{equation} 
where $\Lambda_{1}^k=B^TP_{k}B+\sigma^2D^TP_{k}D, \Lambda_{2}^k=-(R+\Lambda_{1}^{k+1})K_{k}$.
By vectorization, one has 
 \begin{equation} 
    \begin{aligned} 
        & \mathbb{E} \Big\{ x_j^T(k+1)P_{k}x_j(k+1)\Big\} - \mathbb{E} \Big\{ (\tilde{x}_j(k))^T vecs(P_{k-1}^i) \Big\}\\
        =& \mathbb{E} \Big\{ -x_j^T(k) \big[ Q+\frac{1}{N}(\Gamma^TQ\Gamma-Q\Gamma-\Gamma^TQ) \\
        &  +{(K_{k-1}^{i})}^TRK_{k-1}^{i} \big] x_j(k) \Big\} \\
        & +2 \mathbb{E} \Big\{ x_j^T(k) \otimes (u_j(k)-K_{k-1}^{i} x_j(k))^T \Big\} vec(\Lambda_{2}^{k,i}) \\
        &+ \mathbb{E} \Big\{ \tilde{u}_j(k)-\widetilde{K_{k-1}^{i} x_j(k)} \Big\}^T vec(\Lambda_{1}^{k,i}),
    \end{aligned}  \label{e2_2}
\end{equation}  
To approximate the unknown parameters, at least $\frac{n}{2}(n+1)+mn+\frac{m}{2}(m+1)$ data sets are collected to 
solve \eqref{e2_2} using least squares. 
The data matrices are defined as

 \begin{equation}\medmath{
    \left\{ 
        \begin{aligned}
        &\mathcal{I}_{x_{j}}^k\triangleq \Big[ I_{x_{j} x_{j}}^{k,1},I_{x_{j} x_{j}}^{k,2},\cdots,I_{x_{j} x_{j}}^{k,l} \Big]^{\mathrm{T}},I_{x_{j} x_{j}}^{k}\triangleq \mathbb{E}(\tilde{x}_{j}(k)),\\
        &\mathcal{I}_{u_{j}}^k\triangleq \Big[ I_{u_{j} u_{j}}^{k,1},I_{u_{j} u_{j}}^{k,2},\cdots,I_{u_{j} u_{j}}^{k,l} \Big]^{\mathrm{T}},I_{u_{j} u_{j}}^{k}\triangleq \mathbb{E}(\tilde{u}_{j}(k)),\\
        &\mathcal{I}_{x_{j} u_{j}K}^k\triangleq \Big[ I_{x_{j} u_{j}K}^{k,1},I_{x_{j} u_{j}K}^{k,2},\cdots,I_{x_{j} u_{j}K}^{k,l} \Big]^{\mathrm{T}},\\
        &I_{x_{j} u_{j}K}^{k}\triangleq \mathbb{E} \Big\{x_{j}^T(k) \otimes (u_{j}(k)-K_{k-1}^{i} x_{j}(k))^T \Big\}^T,\\
        &\mathcal{I}_{u_{j}x_{j} }^k\triangleq \Big[ I_{u_{j} x_{j} }^{k,1},I_{u_{j}x_{j} }^{k,2},\cdots,I_{u_{j}x_{j} }^{k,l} \Big]^{\mathrm{T}},\\
        &I_{u_{j}x_{j} }^{k}\triangleq \mathbb{E} \Big\{  \tilde{u}_{j}(k) - \widetilde{K_{k-1}^{i} x_{j}(k)} \Big\},\\
        &\mathcal{I}_{x_{j} u_{j}}^k\triangleq \Big[ I_{x_{j} u_{j}}^{k,1},I_{x_{j} u_{j}}^{k,2},\cdots,I_{x_{j} u_{j}}^{k,l} \Big]^{\mathrm{T}}, \\
        &I_{x_{j} u_{j}}^{k}\triangleq \mathbb{E} \Big\{ x_{j}^T(k) \otimes u^T_{j}(k) \Big\}^T,\\
        & \Xi_{k}\triangleq \Big[ \xi^{k,1},\xi^{k,2},\cdots,\xi^{k,l} \Big]^{\mathrm{T}},\\
        & \xi^k \triangleq \mathbb{E} \Big\{x^T_{j}(k+1)P_{k}x_{j}(k+1) +x^T_{j} (k) \Big[ Q \\
        & +\frac{1}{N}(\Gamma^TQ\Gamma-Q\Gamma-\Gamma^TQ)  +{(K_{k-1}^{i})}^TRK_{k-1}^{i} \Big] x_{j} (k) \Big\}
        \end{aligned}
        \right.} \label{e2_3}
\end{equation} 
Then, by the Kronecker product, \eqref{e2_2} implies the following matrix form of linear equations
\begin{equation}
    \Psi_{k}\begin{bmatrix} vecs(P_{k-1}^i)   \\ vec(\Lambda_2^{k,i})  \\ vecs(\Lambda_1^{k,i}) \end{bmatrix}=\Xi_{k},  \label{ls1}
\end{equation} 
where 
$\Psi_{k} \triangleq [\mathcal{I}_{x_{j}}^k, 2\mathcal{I}_{x_{j} u_{j}K}^k, \mathcal{I}_{u_{j}x_{j} }^k]$.

\begin{assum} \label{ass1}
    There exists an $l_{1} >0$, such that, for all $l \geq l_{1}$, 
 \begin{equation}
        \medmath{\text{rank}\left(\begin{bmatrix}
            \mathcal{I}_{x_{j}}^k,  \,\, \mathcal{I}_{x_{j} u_{j} }^k, \,\, \mathcal{I}_{u_{j}}^k    
        \end{bmatrix}\right)
        =\frac{n(n+1)}{2}+mn+\frac{m(m+1)}{2}.} \label{assume}
    \end{equation} 
\end{assum}

\begin{thm} \label{thm3}
    Suppose Assumption \ref{ass1} holds.  $P_k$ is the positive semi-definite solution to the ARE \eqref{e1_5}.
    The sequence $\{ P_k^i, \Lambda_1^{k+1,i}, \Lambda_2^{k+1,i}  \}_{i=1}^{\infty}$  
    generated by recursively solving the following equation
    \begin{equation}
    \begin{bmatrix} vecs(P_{k-1}^i)   \\ vec(\Lambda_2^{k,i})  \\ vecs(\Lambda_1^{k,i}) \end{bmatrix}
    = (\Psi_{k}^T \Psi_{k})^{-1} \Psi_{k}^T \Xi_{k}, \label{th3_1}
\end{equation}
satisfies $\lim\limits_{ i \to \infty } P_{k}^i = P_k $, 
where 
$\Psi_{k} \triangleq [\mathcal{I}_{x_{j}}^k, 2\mathcal{I}_{x_{j} u_{j}K}^k, \mathcal{I}_{u_{j}x_{j} }^k]$. 
\end{thm}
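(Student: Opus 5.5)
The plan is to show that the least-squares update \eqref{th3_1} reproduces exactly the model-based policy iteration \eqref{th1_1}--\eqref{th1_2}, and then to invoke Theorem \ref{thm1}. The argument has three steps, carried out for each fixed iteration index $i$ and running backward in $k$ from $k=T+1$ (where $P^i_{T+1}$ is known) down to $k=1$.

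\emph{Step 1: the model-based quantities solve the linear system \eqref{ls1}.} Let $P_{k-1}^i$ and $K_k^{i+1}$ be produced by \eqref{th1_1}--\eqref{th1_2}, and set
$\Lambda_1^{k,i}=B^TP_k^iB+\sigma^2D^TP_k^iD$ and $\Lambda_2^{k,i}=B^TP_k^iA+\sigma^2D^TP_k^iC$. The latter equals $-(R+\Lambda_1^{k,i})K^{i+1}_{k-1}$, with the index convention of \eqref{e2_1}. Substituting \eqref{e1_1} into $\mathbb{E}\{x_j^T(k+1)P_k^ix_j(k+1)\}$ and using $\mathbb{E}w=0$, $\mathbb{E}w^2=\sigma^2$, together with the independence of $w_j(k)$ from $\mathcal F_k^j$, we get
\[
\mathbb{E}\{x^TA^TP A x+2u^T(B^TPA+\sigma^2D^TPC)x+u^T\Lambda_1 u+\sigma^2x^TC^TPCx\}.
\]
Writing $u=(u-K_{k-1}^ix)+K^i_{k-1}x$ and using \eqref{th1_1} to replace $x^T[(A^i)^TPA^i+\sigma^2(C^i)^TPC^i]x$ by $x^T[P^i_{k-1}-Q-\tfrac1N(\cdots)-(K^i)^TRK^i]x$ recovers \eqref{e2_2} term by term. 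This is the routine expansion already indicated in \eqref{e2_1}. Each data row then satisfies \eqref{e2_2}, so the vector $(\mathrm{vecs}(P^i_{k-1}),\mathrm{vec}(\Lambda_2^{k,i}),\mathrm{vecs}(\Lambda_1^{k,i}))$ solves \eqref{ls1}.

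\emph{Step 2: uniqueness.} This is the main obstacle. We need $\Psi_k$ to have full column rank, whereas Assumption \ref{ass1} is stated for $[\mathcal I_{x_j},\mathcal I_{x_ju_j},\mathcal I_{u_j}]$. I will write
\[
\Psi_k=[\mathcal I_{x_j},\mathcal I_{x_ju_j},\mathcal I_{u_j}]\,T_k,
\]
where $T_k$ is square and block upper-triangular. Indeed, $x^T\otimes(u-Kx)^T=x^T\otimes u^T-\mathrm{(linear\ in\ }\tilde x)$, and $\tilde u-\widetilde{Kx}=\tilde u-(\text{linear in }\tilde x)$, so the diagonal blocks of $T_k$ are $I$, $2I$, $I$. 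Hence $T_k$ is invertible and $\Psi_k$ has full column rank $\tfrac{n(n+1)}2+mn+\tfrac{m(m+1)}2$. Consequently $\Psi_k^T\Psi_k$ is invertible, and the unique solution of \eqref{ls1} is the least-squares formula \eqref{th3_1}. (Care is needed with the vecs/vec convention for the symmetric $\Lambda_1$ block, since $\tilde u$ pairs with $\mathrm{vecs}$. I would also fix the paper's mixed use of $\mathrm{vec}(\Lambda_1)$ and $\mathrm{vecs}(\Lambda_1)$ to $\mathrm{vecs}$.)

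\emph{Step 3: conclusion.} The learned quantities recover $K^{i+1}_{k-1}=-(R+\Lambda_1^{k,i})^{-1}\Lambda_2^{k,i}$. By backward induction on $k$ and induction on $i$, the model-free sequence therefore coincides with the sequence from \eqref{th1_1}--\eqref{th1_2}, assuming the same initial gains $K^0_k$ and $R+\Lambda_1^{k,i}$ invertible, as in Theorem \ref{thm1}. Theorem \ref{thm1} then gives monotone convergence $P_k^i\to P_k$.
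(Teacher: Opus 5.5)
Your proposal is correct and follows the paper's proof. The paper likewise asserts that \eqref{th3_1} reproduces \eqref{th1_1}--\eqref{th1_2} and appeals to Theorem~\ref{thm1}. It also proves full column rank of $\Psi_k$ by rewriting $\Psi_k H=0$ as $[\mathcal{I}_{x_j}^k,\mathcal{I}_{x_ju_j}^k,\mathcal{I}_{u_j}^k]$ acting on $(M_1,M_2,M_3)$, with $M_2=2Y$, $M_3=Z$ and $M_1$ equal to $X$ minus terms in $Y,Z$; this is exactly your block upper-triangular factorization $\Psi_k=[\mathcal I_{x_j},\mathcal I_{x_ju_j},\mathcal I_{u_j}]T_k$, phrased there as a contradiction argument, while your Step~1 supplies the expansion the paper only states as a one-line equivalence.
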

\emph{Proof.} Equation
\eqref{th3_1} is equivalent to \eqref{th1_1} and \eqref{th1_2}.
Then combining with Theorem \ref{thm1}, the model-free method is convergent. 
Next, we show that $\Psi_{k+1}$ has full column rank by contradiction.

We first assume that $\Psi_{k+1}H=0$ for a nonzero vector $H=[X_v^T,Y_v^T,Z_v^T]^T$, 
where 
$X_v= vecs(X) \in \mathbb{R}^ {\frac{n}{2}(n+1)}, $ 
$Y_v=vec(Y) \in \mathbb{R}^{mn},  $
$Z_v= vecs(Z) \in \mathbb{R}^{\frac{m}{2}(m+1)} $.
Then, one has
$\mathcal{I}_{x_{j}}^k vecs(X) + 2\mathcal{I}_{x_{j} u_{j}K}^k  vec(Y) + \mathcal{I}_{u_{j}x_{j} }^k vecs(Z) = 0 $.
By \eqref{e2_2}, we have 
\begin{equation}
    \mathcal{I}_{x_{j}}^k  vecs(M_1 ) + \mathcal{I}_{x_{j} u_{j}}^k  vec( M_2 ) + \mathcal{I}_{u_{j}}^k  vecs( M_3 ) = 0.
\label{pf4_1}
\end{equation}
where 
\begin{equation}
    \left\{
    \begin{aligned}
        &M_1 = X - 2 Y^T{K_{k}^i} -{(K_{k}^i)}^TZ{K_{k}^i}, \\
        &M_2 = 2Y,  \\
        &M_3 = Z.    
    \end{aligned}    
    \right. 
\end{equation}
Since Assumption \ref{ass1} holds,
\eqref{pf4_1} has a unique solution. Therefore, we have $M_1 = 0, M_2 = 0, M_3 = 0$,
i.e., $X = 0, Y = 0, Z = 0$,
leading to a contradiction with the assumption that $H \neq 0$. 
This completes the proof.
\rightline{{\hfill{$\Box$}}}    

Next, we utilize the system trajectories to iterate \eqref{th2_1}-\eqref{th2_2} without the system parameters.
The dynamics of the $i$-th agent is rewritten as follows
\begin{equation}
\begin{aligned} 
\bar{x}_j(k+1) &=A\bar{x}_j(k)+B\bar{u}_j(k) \\
    & =\bar{A}_{k}\bar{x}_j(k)-B\bar{K}_{k}\bar{x}_j(k)+B\bar{u}_j(k),
\end{aligned}     \label{e2_4}
\end{equation}
and  we can deduce that
\begin{equation} 
    \begin{aligned}
        & \bar{x}_{j}^T(k+1)\bar{P}_{k+1}\bar{x}_{j}(k+1)-\bar{x}_{j}^T(k)\bar{P}_{k}\bar{x}_{j}(k) \\
        =& [\bar{A}_{k}\bar{x}_{j}(k)-B\bar{K}_{k}\bar{x}_{j}(k)+B\bar{u}_{j}(k)]^T \bar{P}_{k+1} [\bar{A}_{k}\bar{x}_{j}(k) \\
        & -B\bar{K}_{k}\bar{x}_{j}(k)+B\bar{u}_{j}(k)] - \bar{x}_{j}^T(k) \bar{P}_{k} \bar{x}_{j}(k) \\ 
        =& \bar{x}_{j}^T(k) \bar{A}_{k}^T \bar{P}_{k+1} \bar{A}_{k}\bar{x}_{j}(k) - \bar{x}_{j}^T(k) \bar{P}_{k} \bar{x}_{j}(k)  \\&
        + 2 \bar{x}_{j}^T(k) \bar{A}_{k}^T \bar{P}_{k+1} B(-\bar{K}_{k}\bar{x}_{j}(k)+\bar{u}_{j}(k))  \\&
        + \! (-\bar{K}_{k}\bar{x}_{j}(k) \!+\! \bar{u}_{j}(k))^T \! B^T \! \bar{P}_{k+1} B(-\bar{K}_{k}\bar{x}_{j}(k)+\bar{u}_{j}(k)) \\
        =& - \bar{x}_{j}^T(k)[\frac{N-1}{N}(\Gamma^TQ\Gamma-Q\Gamma-\Gamma^TQ) \\
        &+ {{}({\Lambda}_{2}^{k+1})}^{T} (R + {\Lambda}_{1}^{k+1})^{-1} {\Lambda}_{2}^{k+1} ]\bar{x}_{j}(k) \\
        &+ 2 \bar{u}_{j}^T(k) \bar{\Lambda}_{2}^{k+1} \bar{x}_{j}(k) +  \bar{u}^T_{i}(k)   \bar{\Lambda}_{1}^{k+1} \bar{u}_{j}(k) \\
        &+ \bar{x}_{j}^T(k) \bar{K}_{k}^T (R + {\Lambda}_{1}^{k+1} + \bar{\Lambda}_{1}^{k+1}) \bar{K}_{k} \bar{x}_{j}(k),
    \end{aligned}     \label{e2_5}
\end{equation} 
where 
$\bar{\Lambda}_{1}^{k} \triangleq    {B}^T \bar{P}_{k} {B} $,
$\bar{\Lambda}_{2}^{k} \triangleq    {B}^T \bar{P}_{k} {A} $,
$\bar{K}_{k}=-(R + {\Lambda}_{1}^{k+1} + \bar{\Lambda}_{1}^{k+1})^{-1} ( {\Lambda}_{2}^{k+1} + \bar{\Lambda}_{2}^{k+1})$.
Based on above equation, we can obtain the following recurrence relation
{ \begin{equation}
    \begin{aligned}
        & \bar{x}_{j}^T(k+1)\bar{P}_{k+1}\bar{x}_{j}(k+1)-\bar{x}_{j}^T(k) \bar{P}_{k}^i \bar{x}_{j}(k) \\
        =& - \bar{x}_{j}^T(k)[\frac{N-1}{N}(\Gamma^TQ\Gamma-Q\Gamma-\Gamma^TQ) \\
        &+ {{}({\Lambda}_{2}^{k+1})}^{T} (R + {\Lambda}_{1}^{k+1})^{-1} {\Lambda}_{2}^{k+1} \\
        &- (\bar{K}_{k}^{i})^T (R + {\Lambda}_{1}^{k+1} ) \bar{K}_{k}^{i} ]\bar{x}_{j}(k) \\
        &+ 2  \bar{u}_{j}^T(k) \bar{\Lambda}_{2}^{k+1,i} \bar{x}_{j}(k) + \bar{u}_{j}^T(k)  \bar{\Lambda}_{1}^{k+1,i} \bar{u}_{j}(k) \\
        &+ \bar{x}_{j}^T(k) (\bar{K}_{k}^{i})^T   \bar{\Lambda}_{1}^{k+1,i} \bar{K}_{k}^{i} \bar{x}_{j}(k).
    \end{aligned}   \nonumber
\end{equation}}
By vectorization, the above equation  becomes
 \begin{equation}  
\begin{aligned}
    & \bar{x}_{j}^T(k+1) \bar{P}_{k+1} \bar{x}_{j}(k+1)-\tilde{\bar{x}}_{j}^T(k) vecs(\bar{P}_{k}^i) \\
    =& - \bar{x}_{j}^T(k) \big[\frac{N-1}{N}(\Gamma^TQ\Gamma-Q\Gamma-\Gamma^TQ) \\
        &+ {{}({\Lambda}_{2}^{k+1})}^{T} (R + {\Lambda}_{1}^{k+1})^{-1} {\Lambda}_{2}^{k+1} \\
        &- (\bar{K}_{k}^{i})^T (R + {\Lambda}_{1}^{k+1} ) \bar{K}_{k}^{i} \big]\bar{x}_{j}(k)  \\
        &+ (\tilde{\bar u}_j(k)+\widetilde{{\bar K}_{k}^{i} {\bar x}_j(k)})  ^T vecs(\bar{\Lambda}_{1}^{k+1,i}) \\
        &+ 2 ({\bar x}_{j}^T(k) \otimes {\bar u}^T_{j}(k)) vec(\bar{\Lambda}_{2}^{k+1,i}).
\end{aligned}  \label{e2_7}
\end{equation}

Using least squares, the above equation can be solved as follows, without any knowledge of the system dynamics
\begin{equation}
    \bar{\Psi}_{k}\begin{bmatrix} vecs(\bar{P}_{k}^i) \\ vecs(\bar{\Lambda}_{1}^{k+1,i})   \\   vec(\bar{\Lambda}_{2}^{k+1,i}) \end{bmatrix}=\bar{\Xi}_{k}, \label{ls2}
\end{equation}
where 
$\bar{\Psi}_{k} \triangleq [\bar{\mathcal{I}}_{\bar{x}}^k, \bar{\mathcal{I}}_{ \bar{K} \bar{x} \bar{u} }^k,   2 \bar{\mathcal{I}}_{\bar{x} \bar{u}}^k]$.
At least $\frac{n}{2}(n+1)+ mn+\frac{m}{2}(m+1)$ data sets are collected to solve above equation.
The data matrices are defined as follows
 \begin{equation}\medmath{
    \left\{ 
        \begin{aligned}
        & \bar{\mathcal{I}}_{\bar{x}}^k\triangleq \Big[ \bar{I}_{\bar{x}}^{k,1}, \bar{I}_{\bar{x}}^{k,2},\cdots, \bar{I}_{\bar{x}}^{k,l} \Big] ^{\mathrm{T}}, 
        \bar{I}_{\bar{x}}^{k}\triangleq  \tilde{\bar{x}}_j(k) ,\\        
        & \bar{\mathcal{I}}_{\bar{u}}^k\triangleq \Big[ \bar{I}_{\bar{u}}^{k,1}, \bar{I}_{\bar{u}}^{k,2},\cdots, \bar{I}_{\bar{u}}^{k,l} \Big] ^{\mathrm{T}}, 
        \bar{I}_{\bar{u}}^{k}\triangleq  \tilde{\bar{u}}_j(k) ,\\ 
        & \bar{\mathcal{I}}_{ \bar{K} \bar{x} \bar{u}}^k\triangleq \Big[ \bar{I}_{ \bar{K} \bar{x}\bar{u}}^{k,1}, \bar{I}_{ \bar{K} \bar{x}\bar{u}}^{k,2},\cdots, \bar{I}_{ \bar{K} \bar{x}\bar{u}}^{k,l} \Big]^{\mathrm{T}}, \\
        & I_{\bar{K} \bar{x}\bar{u}}^{k}\triangleq  \widetilde{   \bar{K}_{k}^{i} \bar{x}_j (k)} + \tilde{\bar{u}}_j(k)   ,\\        
        & \bar{\mathcal{I}}_{\bar{x} \bar{u}}^k\triangleq \Big[ \bar{I}_{\bar{x} \bar{u}}^{k,1}, \bar{I}_{\bar{x} \bar{u}}^{k,2},\cdots, \bar{I}_{\bar{x} \bar{u}}^{k,l} \Big]^{\mathrm{T}}, \\
        & \bar{I}_{\bar{x} \bar{u}}^{k}\triangleq  \Big\{ \bar{x}^T_j(k) \otimes  \bar{u}^T_j(k)  \Big\}^T,  \\
        & \bar{\Xi}_{k}\triangleq \Big[\bar{\xi}^{k,1},\bar{\xi}^{k,2},\cdots,\bar{\xi}^{k,l} \Big]^{\mathrm{T}},\\
        & \bar{\xi}^k \triangleq \bar{x}_j^T(k+1) \bar{P}_{k+1} \bar{x}_j(k+1) \\
        & + \bar{x}_j^T(k) \Big[ \frac{N-1}{N}(\Gamma^TQ\Gamma-Q\Gamma-\Gamma^TQ) \\
        & + {{}({\Lambda}_{2}^{k+1})}^{T} (R + {\Lambda}_{1}^{k+1})^{-1} {\Lambda}_{2}^{k+1} \\
        & - (\bar{K}_{k}^{i})^T (R + {\Lambda}_{1}^{k+1} ) \bar{K}_{k}^{i}  \Big]\bar{x}_j(k) .
        \end{aligned}
        \right.}  \nonumber
\end{equation}  
\begin{assum} \label{ass2}
There exists an $l_2 > 0$, such that, for all $l \geq l_2$, 
 \begin{equation}
\text{rank}\left(\begin{bmatrix}
    \bar{\mathcal{I}}_{\bar{x}}^k, \,\, \bar{\mathcal{I}}_{\bar{x} \bar{u}}^k, \,\, \bar{\mathcal{I}}_{ \bar{u}}^k   
\end{bmatrix}\right)
=\frac{n}{2}(n+1)+ mn+\frac{m}{2}(m+1). \nonumber
\end{equation} 
\end{assum}   

\begin{thm} \label{thm4}
	Suppose Assumption \ref{ass2} holds. $\bar{P}_{k}$ is the positive semi-definite solution to the ARE \eqref{e1_6}. 
    The sequence $\{ \bar{P}_{k}^i, \bar{\Lambda}_{1}^{k+1,i}, \bar{\Lambda}_{2}^{k+1,i} \}_{i=0}^{\infty}$  generated by recursively solving the following equation
$$
     \begin{bmatrix} vecs(\bar{P}_{k}^i) \\ vecs(\bar{\Lambda}_{1}^{k+1,i})   \\   vec(\bar{\Lambda}_{2}^{k+1,i}) \end{bmatrix}
    =(\bar{\Psi}_{k}^T \bar{\Psi}_{k})^{-1} \bar{\Psi}_{k}^T \bar{\Xi}_{k},
$$
satisfies $\lim\limits_{ i \to \infty } \bar{P}_{k}^i = \bar{P}_{k} $. 
\end{thm}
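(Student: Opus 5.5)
The plan mirrors the proof of Theorem \ref{thm3}. The argument has two parts. First, the least-squares update is equivalent to the model-based iteration \eqref{th2_1}-\eqref{th2_2} of Theorem \ref{thm2}, so convergence is inherited from that theorem. Second, $\bar{\Psi}_k$ has full column rank, so the least-squares solution $(\bar{\Psi}_k^T\bar{\Psi}_k)^{-1}\bar{\Psi}_k^T\bar{\Xi}_k$ exists and is unique.

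\emph{Step 1: equivalence.} Fix $k$ and the iterate $\bar{K}_k^i$. Starting from the mean dynamics \eqref{e2_4}, redo the expansion \eqref{e2_5} with $\bar{P}_k$ replaced by the iterate $\bar{P}_k^i$ defined by \eqref{th2_1}, using $\bar{P}_k^i=S_k^i-P_k$. This is how the recurrence preceding \eqref{e2_7} is derived. The identity \eqref{e2_7} then holds along every trajectory with the true quantities $\bar{\Lambda}_1^{k+1,i}=B^T\bar{P}_{k+1}^iB$, $\bar{\Lambda}_2^{k+1,i}=B^T\bar{P}_{k+1}^iA$ and $\bar{P}_k^i$. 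Stacking $l$ samples gives \eqref{ls2}, so the true triple solves the linear system.

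Once uniqueness is shown, the computed triple must be exactly the model-based one. The gain update
\[
\bar{K}_k^{i+1}=-(R+\Lambda_1^{k+1}+\bar{\Lambda}_1^{k+1,i})^{-1}(\Lambda_2^{k+1}+\bar{\Lambda}_2^{k+1,i})
\]
then coincides with \eqref{th2_2}, written in explicit form via $S^i_{k+1}=P_{k+1}+\bar{P}^i_{k+1}$. Here $\Lambda_1^{k+1},\Lambda_2^{k+1}$ are already available from Theorem \ref{thm3}. Theorem \ref{thm2} then gives $S_k^i\to S_k$, hence $\bar{P}_k^i=S_k^i-P_k\to\bar{P}_k$.

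\emph{Step 2: full column rank.} Argue by contradiction. Suppose $\bar{\Psi}_kH=0$ with
\[
H=[\,vecs(X)^T,\ vecs(Z)^T,\ vec(Y)^T\,]^T\neq 0 .
\]
Expand the row identity
\[
\tilde{\bar{x}}^T vecs(X)+\big(\tilde{\bar{u}}+\widetilde{\bar{K}_k^i\bar{x}}\big)^T vecs(Z)+2(\bar{x}^T\otimes\bar{u}^T)vec(Y)=0
\]
back into quadratic-form language. The middle term equals $\bar{u}^TZ\bar{u}+\bar{x}^T(\bar{K}_k^i)^TZ\bar{K}_k^i\bar{x}$, so the row reads
\[
\bar{x}^T\big(X+(\bar{K}_k^i)^TZ\bar{K}_k^i\big)\bar{x}+2\bar{u}^TY\bar{x}+\bar{u}^TZ\bar{u}=0 .
\]
This can be written as
\[
\bar{\mathcal{I}}_{\bar{x}}^k vecs(M_1)+\bar{\mathcal{I}}_{\bar{x}\bar{u}}^k vec(M_2)+\bar{\mathcal{I}}_{\bar{u}}^k vecs(M_3)=0,
\]
with $M_1=X+(\bar{K}_k^i)^TZ\bar{K}_k^i$, $M_2=2Y$, $M_3=Z$. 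By Assumption \ref{ass2}, $M_1=M_2=M_3=0$. Hence $Z=0$, $Y=0$, and then $X=0$, contradicting $H\neq0$.

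\emph{Main obstacle.} The step I expect to need the most care is the bookkeeping in Step 1. The identity \eqref{e2_7} must hold exactly with the iterate $\bar{P}_k^i$ on the left and $\bar{P}_{k+1}$ versus $\bar{P}_{k+1}^i$ on the right, consistent with \eqref{th2_1}. In the recursion, the terminal-backward sweep uses $\bar{P}^i_{k+1}$ already obtained at stage $k+1$. The cross terms with $\bar{K}_k^i$ must also cancel as in \eqref{pf2_1}-\eqref{pf2_2}, so I will verify this algebra term by term against \eqref{th2_1}. The rank argument itself is routine, with the same structure as Theorem \ref{thm3}.
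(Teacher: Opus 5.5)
Your Step 2 is the paper's proof. The paper proves nothing beyond the full-column-rank argument. Your $M_2=2Y$ is in fact the correct expansion for the regressor $\tilde{\bar u}+\widetilde{\bar K_k^i\bar x}$; the extra term $2H_2\bar K_k^i$ in the paper's $\bar M_2$ is spurious, though the triangular conclusion is unaffected.

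The gap is Step 1, which the paper leaves implicit and which fails as you state it. Equation \eqref{e2_7} is not an identity for the iterates of \eqref{th2_1}--\eqref{th2_2}, so the term-by-term check you plan will not close. Take even the most favourable case $\bar P^i_{k+1}=\bar P_{k+1}$, and write $G=\Gamma^TQ\Gamma-Q\Gamma-\Gamma^TQ$. Expanding \eqref{th2_1} with $S_k^i=P_k+\bar P_k^i$ and subtracting \eqref{e1_5} gives
\[
\bar P_k^i=\tfrac{N-1}{N}G+A^T\bar P_{k+1}A+M_k^T\Upsilon_k^{-1}M_k+(\bar K_k^i)^T\bar M_k+\bar M_k^T\bar K_k^i+(\bar K_k^i)^T\bar\Upsilon_k\bar K_k^i .
\]
On the other hand, $\bar x_j(k+1)=A\bar x_j(k)+B\bar u_j(k)$ holds exactly. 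So matching coefficients in \eqref{e2_7} under Assumption \ref{ass2}, with the uniqueness from your Step 2, forces $\bar\Lambda_1^{k+1,i}=B^T\bar P_{k+1}B$, $\bar\Lambda_2^{k+1,i}=B^T\bar P_{k+1}A$, and
\[
\bar P_k^i=\tfrac{N-1}{N}G+A^T\bar P_{k+1}A+M_k^T\Upsilon_k^{-1}M_k-(\bar K_k^i)^T\bar\Upsilon_k\bar K_k^i .
\]
The two expressions differ by $(\bar K_k^i)^T(\bar M_k+\bar\Upsilon_k\bar K_k^i)+(\bar M_k+\bar\Upsilon_k\bar K_k^i)^T\bar K_k^i$. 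This vanishes when $\bar K_k^i=-\bar\Upsilon_k^{-1}\bar M_k=\bar K_k$, which is exactly the optimality relation used in \eqref{e2_5}, the identity \eqref{e2_7} is copied from. It does not vanish for a general iterate, such as an arbitrary $\bar K_k^0$ or a $\bar K_k^i$ from \eqref{th2_2}, which is greedy for $S^{i-1}_{k+1}$. Moreover, $\bar\xi^k$ contains $\bar P_{k+1}$ rather than $\bar P^i_{k+1}$, so the learned $\bar\Lambda$'s are never $B^T\bar P^i_{k+1}(\cdot)$. Uniqueness therefore does not identify the least-squares triple with the model-based one, and you cannot inherit the monotone convergence of Theorem \ref{thm2}.

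The repair is easier than your route: use the explicit unique solution just computed. Take $\Lambda_1^{k+1}=\Upsilon_k-R$ and $\Lambda_2^{k+1}=M_k$, as in \eqref{e2_5}. Since $\bar\Lambda_{1}^{k+1,i},\bar\Lambda_2^{k+1,i}$ are exact for every $i$, your gain update gives $\bar K_k^{i+1}=-\bar\Upsilon_k^{-1}\bar M_k=\bar K_k$. The second display together with \eqref{e1_6} then gives $\bar P_k^{i}=\bar P_k$ for every $i\ge 1$, so the limit is immediate. This needs three things: the $\bar P_{k+1}$ in $\bar\xi^k$ is the exact value from the previous backward stage (true by backward induction from $\bar P_{T+1}$), $\Lambda_1^{k+1},\Lambda_2^{k+1}$ are the exact limits from Theorem \ref{thm3}, and $\bar\Upsilon_k$ is invertible. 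If you specifically want equivalence with \eqref{th2_1}--\eqref{th2_2}, you would need regressors of the type in \eqref{e2_2}, namely $\tilde{\bar u}-\widetilde{\bar K_k^i\bar x}$ and $\bar x^T\otimes(\bar u-\bar K_k^i\bar x)^T$, with $\bar P^i_{k+1}$ in $\bar\xi^k$. That is a different least-squares problem from the one in the statement.
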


\emph{Proof.}
We first assume that $\bar{\Psi}_{k} \bar{H} = 0$ for a nonzero vector $\bar{H} = [\bar{H}_{v,1}^T,\bar{H}_{v,2}^T,\bar{H}_{v,3}^T]^T$,
where $\bar{H}_{v,1}=vecs(H_1),$ $\bar{H}_{v,2}=vecs(H_2),\bar{H}_{v,3}=vec(H_3)$.
Then, one has 
$ \bar{\mathcal{I}}_{\bar{x}}^k \bar{H}_{v,1} + \bar{\mathcal{I}}_{ \bar{K} \bar{x} \bar{u} }^k \bar{H}_{v,2} + 2 \bar{\mathcal{I}}_{\bar{x} \bar{u}}^k \bar{H}_{v,3} =0$.
By \eqref{e2_7}, we have  
\begin{equation}
   \bar{ \mathcal{I}}_{\bar{x} }^k  vecs(\bar{M_1}) + \bar{\mathcal{I}}_{\bar{x} \bar{u} }^k  vec(\bar{M_2}) + \bar{\mathcal{I}}_{\bar{u} }^k  vecs(\bar{M_3})   = 0,
\label{pfls2_1}
\end{equation}
where 
\begin{equation}
    \left\{
    \begin{aligned}
        \bar{M}_1 &= H_1 + {(\bar{K}_{k}^{i})}^T H_2 { \bar{K}_{k}^{i} }  , \\
        \bar{M}_2 &= 2H_3 + 2 H_2 { \bar{K}_{k}^{i} }   ,  \\
        \bar{M}_3 &= H_2 .    
    \end{aligned}   \nonumber
    \right. 
\end{equation}
Since Assumption \ref{ass2} holds,
\eqref{pfls2_1} has a unique solution. Therefore, we have $\bar{M}_1 = 0, \bar{M}_2 = 0, \bar{M}_3 = 0$,
i.e., $H_1 = 0, H_2 = 0, H_3 = 0$,
leading to a contradiction with the assumption that $\bar{H} \neq 0$. 
This completes the proof. 
{{\hfill{$\Box$}}}   

\begin{rem}
    To satisfy the persistence of excitation condition, probing noise is added to the system.
    If the collected data do not satisfy the rank condition, 
    probing noise is injected into the control input and additional data are collected until the condition is satisfied.
    In other words, emoploy $u_j = K_0 x_j + e_j$, where $e_j$ is the probing noise. 
    The selection of probing noise is no simple matter in general RL \cite{Jiang2012Computational}.
    There are several types of exploration noise \cite{XU2025Mean}, such as random noise, exponentially decreasing probing noise, sum of sinusoidal signals with different frequencies.
\end{rem}

\begin{rem}
    The proposed two off-policy RL algorithms iteratively solve \eqref{ls1} and \eqref{ls2}, respectively.
    The algorithms do not require the knowledge of the system dynamics and 
    their behavior policy is unrelated to the extimated policy,  which enable us to make use of the previously produced data for learning.
\end{rem} 

\section{Main results for Problem 2}

In this section, we investigate properties of the proposed model-free algorithm as $T \to \infty$ (infinite-horizon
case).

\begin{assum} \label{assum_stab}
        System \eqref{e1_1}  $[A,B;C,D]$  is $L^2-stabilizable$ and
        $\bar{\mathcal{M}} \neq \varnothing$, where   $\bar{\mathcal{M}} \triangleq \{(P,S) =(P^T,S^T) | \bar{\mathcal{H}}(P,S) > 0, \mathcal{H}(P) > 0\}  $,
\\         
$
\left\{
\begin{aligned}
\overline{\mathcal{H}}(P,S) &\triangleq 
\begin{bmatrix}
\bar{L}(P,S) & \bar{H}^T(P,S) \\
\bar{H}(P,S) & \bar{W}(P,S)
\end{bmatrix}, \\
\mathcal{H}(P) &\triangleq 
\begin{bmatrix}
L(P) & H^T(P) \\
H(P) & W(P)
\end{bmatrix},
\end{aligned}
\right.
$\\
with\\
$
\left\{
\begin{aligned}
\bar{L}(P,S) &\triangleq  \frac{N-1}{N}(\Gamma ^ T Q \Gamma - Q \Gamma - \Gamma ^ T Q) \\
&+ \sigma^2 C ^T P C    
 + A ^T S A - S, \\
\bar{H}(P,S) &\triangleq  B ^T S  A  
 + \sigma^2 D ^T P C,   \\
\bar{W}(P,S) &\triangleq R + B ^T S B + \sigma^2 D  ^T P D   ,  \\
L(P) &\triangleq Q + \frac{1}{N}(\Gamma ^ T Q \Gamma - Q \Gamma - \Gamma ^ T Q) \\
&+ A^T P A + \sigma^2 C^T P C - P, \\
H(P) &\triangleq B^T P A + \sigma^2 D^T P C,  \\
W(P) &\triangleq R + B^T P B + \sigma^2 D^T P D.
\end{aligned}
\right.
$
\end{assum}

\subsection{Model-based policy iteration}

\begin{lem} \cite[Proposition 3.2]{ni2015indefinite}
    Under Assumption \ref{assum_stab}, 
    there exists a stabilizing solution 
    to the following coupled AREs \eqref{section4_1}-\eqref{section4_2}:
    \begin{equation}
        \begin{aligned}
            &P=Q+\frac{1}{N}(\Gamma^TQ\Gamma-Q\Gamma-\Gamma^TQ)+A^TPA \\
            &+\sigma^2C^TPC  -(B^TPA+\sigma^2D^TPC)^T \\
            &\times   (R+B^TPB+\sigma^2D^TPD)^{-1} \\
            &\times  (B^TPA+\sigma^2D^TPC), 
        \end{aligned}     \label{section4_1}
    \end{equation}
    \begin{equation}      
        \begin{aligned} 
            &\bar{P}=\frac{N-1}{N}(\Gamma^TQ\Gamma-Q\Gamma-\Gamma^TQ)+A^T\bar{P}A \\
            &+(B^TPA+\sigma^2D^TPC)^T(R+B^TPB \\
            &+\sigma^2D^TPD)^{-1}(B^TPA+\sigma^2D^TPC)\\
            &-(B^TPA+\sigma^2D^TPC+B^T\bar{P}A)^T \\
            &\times (R+B^TPB+\sigma^2D^TPD+B^T\bar{P}B)^{-1} \\
            &\times  (B^TPA+\sigma^2D^TPC+B^T\bar{P}A). 
        \end{aligned} \label{section4_2}
    \end{equation}
\end{lem}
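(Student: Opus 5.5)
Under Assumption \ref{assum_stab}, I would reduce the coupled system \eqref{section4_1}--\eqref{section4_2} to two standard indefinite stochastic AREs of the type treated in \cite{ni2015indefinite}, and solve them one after the other.

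\textbf{Step 1: the equation for $P$.} Equation \eqref{section4_1} is an indefinite stochastic ARE with state weight $Q_1 \triangleq Q+\frac{1}{N}(\Gamma^TQ\Gamma-Q\Gamma-\Gamma^TQ)$ and control weight $R$, for the system $[A,B;C,D]$. In this notation the generalized LMI $\mathcal{H}(P)>0$ is exactly the condition that the set $\{P:\ \mathcal{H}(P)>0\}$ is nonempty. Nonemptiness of this set follows from $\bar{\mathcal{M}}\neq\varnothing$. Together with $L^2$-stabilizability of $[A,B;C,D]$, the standard indefinite-ARE theory (the maximal/stabilizing solution result underlying \cite[Proposition 3.2]{ni2015indefinite}) gives a solution $P$ with the following properties:
\begin{itemize}
\item $W(P)=R+B^TPB+\sigma^2D^TPD>0$;
\item $P$ is maximal;
\item $K=-W(P)^{-1}H(P)$ makes $(A+BK,C+DK)$ mean-square stable.
\end{itemize}
I would obtain $P$ as the limit of a Kleinman-type iteration started from a stabilizing gain. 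This mirrors Theorem \ref{thm1} with $T\to\infty$. Monotonic decrease comes from the identity \eqref{pf1_1}. For the lower bound, I would compare with any $P_0$ such that $(P_0,S_0)\in\bar{\mathcal{M}}$, using $\mathcal{H}(P_0)>0$ and a Schur complement.

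\textbf{Step 2: the equation for $S=P+\bar P$.} With $P$ fixed, I would add \eqref{section4_1} and \eqref{section4_2}. As in \eqref{e1_7}, this gives an ARE in $S$:
\[
S=Q_2+A^TSA+\sigma^2C^TPC-\bar H(P,S)^T\bar W(P,S)^{-1}\bar H(P,S),
\]
where $Q_2\triangleq Q+\Gamma^TQ\Gamma-Q\Gamma-\Gamma^TQ$. This is a deterministic-type ARE for the pair $(A,B)$. It has state weight $Q_2+\sigma^2C^TPC$, control weight $R+\sigma^2D^TPD$, and cross term $\sigma^2D^TPC$. Its associated LMI is $\bar{\mathcal{H}}(P,S)>0$.

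\textbf{Main obstacle.} The element of $\bar{\mathcal{M}}$ involves some $P_0$, not the solution $P$ from Step 1. To get a feasible $S$ for the actual $P$, I would exploit monotonicity. By maximality, $P\ge P_0$. The map $P\mapsto \bar{\mathcal{H}}(P,S)$ contains $P$ only through the congruence $\sigma^2[C\ D]^TP[C\ D]$, so it is monotone nondecreasing in $P$. Hence $\bar{\mathcal{H}}(P,S_0)\ge\bar{\mathcal{H}}(P_0,S_0)>0$. With this feasibility in hand, I would again run a policy iteration, now \eqref{th2_1}--\eqref{th2_2} with $T\to\infty$. Monotone decrease comes from \eqref{pf2_2}, and the lower bound $S_0$ comes from a Schur-complement comparison. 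Stabilizability of the mean dynamics is inherited from $L^2$-stabilizability of \eqref{e_def_1}.

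\textbf{Step 3: stabilizing property.} The limit $S$ makes $A+B\bar K$ stable, with $\bar K=-\bar W^{-1}\bar H$. Setting $\bar P=S-P$ gives the pair $(P,\bar P)$. Finally I would check that the gains $(K,\bar K)$ render \eqref{e_def_1} $L^2$-stable. I would decompose $y$ into $y-\mathbb{E}y$ and $\mathbb{E}y$: the first part is governed by the mean-square stable pair $(A+BK,C+DK)$, and the second by the stable matrix $A+B\bar K$. This yields the stabilizing solution claimed.
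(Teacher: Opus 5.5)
Your argument is correct, and it takes a genuinely different route from the paper. The paper gives no argument at all: it simply imports the statement from \cite[Proposition 3.2]{ni2015indefinite}, a result stated for general mean-field LQ problems. You instead exploit the triangular structure of the system. Equation \eqref{section4_1} involves only $P$, and once $P$ is frozen, \eqref{S_1} is a deterministic indefinite ARE for the pair $(A,B)$. You then handle the coupling with one observation: $\bar{\mathcal H}(P,S)=\bar{\mathcal H}(0,S)+\sigma^2\begin{bmatrix}C & D\end{bmatrix}^TP\begin{bmatrix}C & D\end{bmatrix}$ is nondecreasing in $P$, so strict feasibility passes from $(P_0,S_0)$ to $(P,S_0)$ once $P\ge P_0$. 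The citation buys brevity and generality. Your route buys a self-contained, constructive proof that coincides with the model-based iterations of Theorems~\ref{thm5} and~\ref{thm6}. It also supplies facts those theorems use without justification: $W(P^i)\ge W(P_0)>0$, $\bar W(P,S^{i})\ge\bar W(P,S_0)>0$, and the lower bounds $P^i\ge P_0$, $S^i\ge S_0$.

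Two points should be made explicit.

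First, in the infinite-horizon iteration, the analogue of \eqref{pf1_1}, namely \eqref{section4_7}, yields monotone decrease only once you know $K^{i+1}$ is stabilizing. This is where strictness of $\mathcal H(P_0)>0$ is used. With $\Pi_{i+1}\triangleq\begin{bmatrix}I\\ K^{i+1}\end{bmatrix}$ one has $P^i-P_0=\mathcal L_{i+1}(P^i-P_0)+\Pi_{i+1}^T\mathcal H(P_0)\Pi_{i+1}+(K^i-K^{i+1})^TW(P^i)(K^i-K^{i+1})$. You already have $P^i-P_0\ge0$ from the same comparison with the stabilizing $K^i$. The identity then gives $(P^i-P_0)-\mathcal L_{i+1}(P^i-P_0)>0$, which forces the spectral radius of $\mathcal L_{i+1}$ below one. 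The same identity in the limit shows that the limiting $K$ is stabilizing. The $S$-iteration works identically with the map $X\mapsto(A+B\bar K^{i+1})^TX(A+B\bar K^{i+1})$.

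Second, reading $\bar{\mathcal H}(P,S)>0$ as the LMI of \eqref{S_1} requires $\bar L(P,S)$ to contain $Q+\Gamma^TQ\Gamma-Q\Gamma-\Gamma^TQ$. The $\bar L$ displayed in Assumption~\ref{assum_stab} carries only the $\frac{N-1}{N}$ term, apparently a misprint, so you should state that you use the corrected form.

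Finally, in Step 3, note that $y-\mathbb{E}y$ is also forced by $(C+D\bar K)\mathbb{E}y\,w$. This is harmless, because that forcing decays geometrically.
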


\begin{thm} \label{thm5}
Let $K^0$ be any stabilizing gain matrix, and let $P^{i}$ be the solution of the Lyapunov equation \eqref{th4_1}.
\begin{equation}
    \begin{aligned} 
        P^i=& Q+\frac{1}{N}(\Gamma^TQ\Gamma-Q\Gamma-\Gamma^TQ)
        +{(A^i)}^TP^iA^i \\
        & +\sigma^2{(C^i)}^TP^iC^i+{(K^i)}^TRK^i, 
    \end{aligned} \label{th4_1}
\end{equation}
where $K^{i}$, with $i=1,2,...,$ are defined recursively by: 
\begin{equation}
    \begin{aligned} 
        K^{i+1}=& -(R+B^TP^iB+\sigma^2 D^TP^iD)^{-1} \\
        &\times  (B^TP^iA+\sigma^2 D^TP^iC),
    \end{aligned} \label{th4_2}
\end{equation}
where 
$A^i \triangleq A+BK^i$ and $C^i \triangleq C+DK^i$. 
Then, under Assumption \ref{assum_stab},  the following properties hold:\\
(1). $K^{i}$ is a stabilizer of the system $[A,B;C,D|Q]$.\\
(2).  $P \leq P^{i+1} \leq P^i$. \\
(3). $\lim\limits_{i\to+\infty} P^i = P$, $\lim\limits_{i\to+\infty} K^i = K$.
\end{thm}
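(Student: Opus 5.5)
We prove (1)--(3) simultaneously by induction on $i$, following the structure of Theorem \ref{thm1} but replacing the backward recursion in $k$ with a spectral argument for the Lyapunov-type operator $\mathcal{L}_{K}(X) \triangleq (A+BK)^TX(A+BK)+\sigma^2(C+DK)^TX(C+DK)$. The basic fact we use is this. $K$ is mean-square ($L^2$) stabilizing if and only if the spectral radius $\rho(\mathcal{L}_K)<1$. In that case, for any symmetric $Y$, the equation $X=\mathcal{L}_K(X)+Y$ has the unique solution $X=\sum_{t\ge0}\mathcal{L}_K^t(Y)$, and the map $Y\mapsto X$ is monotone: $Y\ge0$ implies $X\ge0$, because $\mathcal{L}_K$ is a positive operator on $\mathbb{S}^n$. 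This lets us transfer matrix inequalities from the one-step differences to the full solutions, just as in the finite-horizon proof, where $P_{T+1}^i=P_{T+1}^{i+1}$ served as the anchor.

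\emph{Step 1: the difference identities.} Let $P^i$ solve (\ref{th4_1}) for a stabilizing $K^i$, and define $K^{i+1}$ by (\ref{th4_2}). Write $A^i=A^{i+1}+B(K^i-K^{i+1})$ and $C^i=C^{i+1}+D(K^i-K^{i+1})$, and complete the square exactly as in (\ref{pf1_1}). This gives
\begin{equation*}
P^i=\mathcal{L}_{K^{i+1}}(P^i)+Q_N+(K^{i+1})^TRK^{i+1}+(K^i-K^{i+1})^TW(P^i)(K^i-K^{i+1}),
\end{equation*}
where $Q_N\triangleq Q+\frac1N(\Gamma^TQ\Gamma-Q\Gamma-\Gamma^TQ)$ and $W(\cdot)$ is as in Assumption \ref{assum_stab}. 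In the same way, comparing with the ARE (\ref{section4_1}) gives
\begin{equation*}
P^{i}-P=\mathcal{L}_{K^{i}}(P^{i}-P)+(K^{i}-K)^TW(P)(K^{i}-K),
\end{equation*}
where $K$ is the stabilizing feedback associated with $P$. Note that $W(P)>0$, since $(P,S)\in\bar{\mathcal M}$ and the stabilizing solution is the maximal one.

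\emph{Step 2: stability of $K^{i+1}$.} This is the main obstacle, because $Q_N$ and $R$ are indefinite, so the standard Lyapunov argument (which uses $Q_N+K^TRK\ge0$) is unavailable. We instead subtract the ARE from the first identity in Step 1. This gives
\begin{equation*}
P^i-P=\mathcal{L}_{K^{i+1}}(P^i-P)+(K^{i+1}-K)^TW(P)(K^{i+1}-K)+(K^i-K^{i+1})^TW(P^i)(K^i-K^{i+1}),
\end{equation*}
with $P^i-P\ge0$ by the induction hypothesis and $W(P^i)\ge W(P)>0$. Now suppose $\rho(\mathcal{L}_{K^{i+1}})\ge1$. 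By Krein--Rutman there is a nonzero $X\ge0$ in the dual cone with $\mathcal{L}_{K^{i+1}}^*(X)=\rho X$. Pairing the displayed identity with $X$ forces $\langle X,(K^{i+1}-K)^TW(P)(K^{i+1}-K)\rangle=0$, so $K^{i+1}$ agrees with $K$ on the range of $X$. Then $\mathcal{L}^*_{K^{i+1}}(X)=\mathcal{L}^*_{K}(X)$, which contradicts $\rho(\mathcal{L}_K)<1$. I expect the range argument to be the delicate point; if it fails, I would fall back on a perturbation argument using $W(P)>0$ and the strict inequality $\mathcal{H}(P)>0$ from Assumption \ref{assum_stab}. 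The base case uses the stabilizing $K^0$ given in the statement.

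\emph{Step 3: monotonicity and convergence.} Once $K^{i+1}$ is stabilizing, $P^{i+1}$ is well defined. Subtracting (\ref{th4_1}) at step $i+1$ from the first identity in Step 1 gives
\begin{equation*}
P^i-P^{i+1}=\mathcal{L}_{K^{i+1}}(P^i-P^{i+1})+(K^i-K^{i+1})^TW(P^i)(K^i-K^{i+1}).
\end{equation*}
The right-hand forcing term is $\ge0$, so monotonicity of the Lyapunov solution map gives $P^{i+1}\le P^i$. The second identity in Step 1, applied at index $i+1$, gives $P^{i+1}\ge P$. Hence $\{P^i\}$ is monotone decreasing and bounded below, so it converges to some $P^\infty\ge P$, and $K^i\to K^\infty$ by continuity of (\ref{th4_2}), since $W(P^\infty)\ge W(P)>0$ is invertible. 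Passing to the limit, as in (\ref{pf1_2})--(\ref{pf1_4}), shows that $P^\infty$ solves (\ref{section4_1}). The limit gain $K^\infty$ is stabilizing, or at least $P^\infty$ is the maximal solution lying between $P$ and $P^0$. By uniqueness of the stabilizing (maximal) solution we conclude $P^\infty=P$ and hence $K^\infty=K$.
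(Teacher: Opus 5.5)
Your proposal is correct and follows essentially the paper's route: induction on $i$, the identity for $P^{i}-P$ written with $\mathcal{L}_{K^{i+1}}$, pairing with an eigenvector of the adjoint to exclude $\rho\ge 1$, and then monotone convergence via Lyapunov series. Two refinements are worth noting. First, your Krein--Rutman step supplies the PSD eigenvector that the paper only assumes implicitly. Second, you use the vanishing of the $(K^{i+1}-K)^TW(P)(K^{i+1}-K)$ term to contradict stability of the ARE gain $K$, whereas the paper uses the $(K^{i}-K^{i+1})$ term to contradict stability of the previous iterate $K^{i}$; both work.

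Two small fixes are needed.
\begin{itemize}
\item $(P,S)\notin\bar{\mathcal M}$, because $\mathcal{H}(P)$ is singular at an ARE solution. Get $W(P)>0$ instead from $P\ge\tilde P$ for some $(\tilde P,\tilde S)\in\bar{\mathcal M}$.
\item The limit is identified cleanly from $P^\infty-P=\mathcal{L}_K(P^\infty-P)-(K-K^\infty)^TW(P^\infty)(K-K^\infty)$. With $\rho(\mathcal{L}_K)<1$ this gives $P^\infty\le P$, so you do not need the vague maximality remark.
\end{itemize}
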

\emph{Proof.}
We define the Lyapunov-type operator as follows
 	\begin{equation}\medmath{
		   \mathcal{L}_i(X) \triangleq (A + B K^{i})^T X (A + B K^{i})  + \sigma^2 (C + D K^{i})^T X (C + D K^{i}), \nonumber
	}\end{equation}	 
	with spectrum  
	\begin{equation}
        \sigma(\mathcal{L}_i(X)) \triangleq \{ \lambda \in \mathbb{R} | \mathcal{L}_i(X) = \lambda X, X \in \mathbb{S}^{n}, X \neq 0  \}. \nonumber
    \end{equation}
    If $i=0$, the initial conditon $K^0$ ensures result (1) holds.
    Suppose the result (1) holds for $i-1 > 0$, we prove that the result (1) holds for $i$.
    Assume there exists $\lambda \in \sigma(\mathcal{L}_i(X))$ such that $\lambda \geq 1$.
    Subtracting \eqref{section4_1} from \eqref{th4_1} yields 
\begin{equation}
        \begin{aligned} 
            & P^i - P =  
            {(A^i)}^T(P^i - P)A^i 
             +\sigma^2{(C^i)}^T(P^i - P)C^i\\
             &+{(K^i - K)}^T( R + B^TPB + \sigma^2 D^TPD )(K^i - K). 
        \end{aligned} \nonumber
\end{equation}
This can be written as
 \begin{equation}\medmath{
    \begin{aligned} 
         & {(A^{i-1})}^T(P^{i-1} - P)A^{i-1} 
         +\sigma^2{(C^{i-1})}^T(P^{i-1} - P)C^{i-1} \\
         &+{(K^{i-1} - K)}^T( R + B^TPB + \sigma^2 D^TPD )(K^{i-1} - K)\\
         &  - (P^{i-1} - P)= 0. 
    \end{aligned}} \label{section4_4}
\end{equation} 
Since $K^{i-1}$ is a stabilizer and $R + B^TPB + \sigma^2 D^TPD > 0$, one has $P^{i-1} - P \geq 0$.
The above equation can be transformed into
  \begin{equation}\medmath{
    \begin{aligned} 
         & {(A^{i})}^T(P^{i-1} - P)A^{i} 
         +\sigma^2{(C^{i})}^T(P^{i-1} - P)C^{i} 
           - (P^{i-1} - P) \\
         &= -{(K^{i} - K)}^T( R + B^TPB + \sigma^2 D^TPD )(K^{i} - K)\\
         &-{(K^{i-1} - K^{i})}^T( R + B^TP^{i-1}B + \sigma^2 D^TP^{i-1}D )(K^{i-1} - K^{i}).
    \end{aligned}}  \nonumber
\end{equation} 
Then, we have
\begin{equation}
    \begin{aligned} 
        &(\mathcal{A}_i - 1 ) vec(P^{i-1} - P)  \\
        =& - [{(K^{i} - K)}^T \otimes {(K^{i} - K)}^T] \\
        &\times vec(R + B^TPB + \sigma^2 D^TPD)\\
        &- [{(K^{i-1} - K^{i})}^T \otimes {(K^{i-1} - K^{i})}^T] \\
        & \times vec(R + B^TP^{i-1}B + \sigma^2 D^TP^{i-1}D),
    \end{aligned} \nonumber
\end{equation}
where $\mathcal{A}_i= (A^i)^T \otimes (A^i)^T + \sigma^2 (C^i)^T \otimes (C^i)^T$.
Premultiplying the above equation by $vec^T(X)$, we have
\begin{equation}
    \begin{aligned} 
        & vec^T(X)(\lambda - 1 ) vec(P^{i-1} - P)   \\
       = &- vec^T(X)[{(K^{i} - K)}^T \otimes {(K^{i} - K)}^T] \\
        & \times vec(R + B^TPB + \sigma^2 D^TPD)\\
        &- vec^T(X)[{(K^{i-1} - K^{i})}^T \otimes {(K^{i-1} - K^{i})}^T] \\
        & \times vec(R + B^TP^{i-1}B + \sigma^2 D^TP^{i-1}D),
    \end{aligned} \nonumber
\end{equation}
or equivalently,
\begin{equation}
    \begin{aligned} 
        & (\lambda - 1 ) Tr [(P^{i-1} - P)X]   \\
       = &- Tr  [X^{1/2}{(K^{i} - K)}^T (R + B^TPB + \sigma^2 D^TPD) \\
        &\times {(K^{i} - K)}X^{1/2} 
        + X^{1/2}{(K^{i-1} - K^{i})}^T \times (R \\
        & + B^TP^{i-1}B + \sigma^2 D^TP^{i-1}D) {(K^{i-1} - K^{i})} X^{1/2} ].
    \end{aligned} \label{section4_5}
\end{equation}
As $P^{i-1} - P \geq 0$ and $-{(K^{i} - K)}^T (R + B^TPB + \sigma^2 D^TPD){(K^{i} - K)} - {(K^{i-1} - K^{i})}^T(R + B^TP^{i-1}B + \sigma^2 D^TP^{i-1}D) {(K^{i-1} - K^{i})} \leq 0$,
the equation \eqref{section4_5} implies ${(K^{i-1} - K^{i})}X=0$. 
Consequently, there exists $\lambda \in \sigma(\mathcal{L}_{i-1}(X))$ such that $\lambda \geq 1$.
It contradicts with the induction assumption. Therefore, the result (1) holds for $i \in \mathbb{N}^+ $.

Replacing $i$ in \eqref{th4_1} by $i-1$, one has
\begin{equation}
    \begin{aligned} 
        &P^{i-1}= Q+\frac{1}{N}(\Gamma^TQ\Gamma-Q\Gamma-\Gamma^TQ) \\
        & +{(A^{i-1})}^TP^{i-1}A^{i-1} +\sigma^2{(C^{i-1})}^TP^{i-1}C^{i-1} \\
        & +{(K^{i-1})}^TRK^{i-1}. 
    \end{aligned} \label{section4_6}
\end{equation}
Subtracting  \eqref{th4_1} from \eqref{section4_6}, one has
\begin{equation}
    \begin{aligned} 
        P^{i-1}-P^{i}=&{(A^{i})}^T(P^{i-1}-P^{i})A^{i} \\
        &+\sigma^2{(C^{i})}^T(P^{i-1}-P^{i})C^{i}\\ 
        &+(K^{i-1}-K^{i})^T (R+B^TP^{i-1}B \\
        & +\sigma^2 D^TP^{i-1}D)(K^{i-1}-K^{i}).  
    \end{aligned} \label{section4_7}
\end{equation}
Since $K^{i}$ is a stabilizer and $R + B^TP^{i-1}B + \sigma^2 D^TP^{i-1}D > 0$, 
one has $P^{i-1} - P^{i} \geq 0$. From \eqref{section4_4} and \eqref{section4_7}, the result (2) holds for $i \in \mathbb{N}^+$.

Therefore, the sequence  $\{ P^i \}_{i=0}^{\infty}$ is monotonically decreasing and has a lower bound. 
This sequence has a limit.
The sequence  $\{ K^i \}_{i=0}^{\infty}$ has a limit since $K^i$ is the unique solution of \eqref{th4_2} and the convergence of $\{ P^i \}_{i=0}^{\infty}$ is guaranteed.
Besides, $P$ satisfies \eqref{th4_1} with $K^i = K$. 
We have $\lim\limits_{i\to+\infty} P^i = P$ and $\lim\limits_{i\to+\infty} K^i = K$. 
This completes the proof.  
{{\hfill{$\Box$}}}

\begin{rem}
    Compared to the finite-horizon case (Theorem \ref{thm1}), 
    Theorem \ref{thm5} requires an additional assumption that $K^0$ is a stabilizer, 
    and we needs to show that $K^i$ is a stabilizer for all $i \in \mathbb{N}^+$.
\end{rem}

Combining \eqref{section4_1} with \eqref{section4_2}, one has
\begin{equation} 
\begin{aligned} 
S =& Q + \Gamma ^ T Q \Gamma - Q \Gamma - \Gamma ^ T Q + \bar{A}^T S \bar{A} \\
& + \sigma^2 \bar{C}^T P \bar{C} + \bar{K}^T R \bar{K},
\end{aligned} \label{S_1}  
\end{equation}
where 
$S \triangleq P + \bar{P}$, $\bar{A} \triangleq A+B \bar{K}$ and $\bar{C} \triangleq C+D \bar{K}$.

Similar to Theorem \ref{thm5}, we obtain the following result.

\begin{thm} \label{thm6}
    Let $\bar{K}^0$ be any stabilizing gain matrix, and let $S^i$ be the solution of the Lyapunov equation \eqref{section4_8}.
\begin{equation}   
\begin{aligned}
    S^i =& Q + \Gamma ^ T Q \Gamma - Q \Gamma - \Gamma ^ T Q + {{}(\bar{A}^i)}^{T} S^i {\bar{A}^i} \\
    &+ \sigma^2 {{}(\bar{C}^i)}^{T} P \bar{C}^i + {{}(\bar{K}^i)}^{T} R \bar{K}^i,
\end{aligned}   \label{section4_8}
\end{equation}
where $\bar{K}^{i}$, with $i = 1,2,...,$ are defined recursively by: 
\begin{equation}   
\begin{aligned}
    \bar{K}^{i+1} + R^{-1} (B^TS^i  {\bar{A}^{i+1}} + \sigma^2 D^T P {\bar{C}^{i+1}}) =0.
\end{aligned}   \label{section4_9}
\end{equation}
Then, under Assumption \ref{assum_stab}, the following properties hold:\\
(1). $\bar{K}^{i}$ is a stabilizer of the system $[A,B;C,D|Q]$.\\
(2).  $S \leq S^{i+1} \leq S^i$, \\
(3). $\lim\limits_{i\to+\infty} S^i = S$, $\lim\limits_{i\to+\infty} \bar{P}^i = \bar{P}$, $\lim\limits_{i\to+\infty} \bar{K}^i = \bar{K}$.
\end{thm}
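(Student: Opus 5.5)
The plan is to mirror the proof of Theorem \ref{thm5}, with $P$ from \eqref{section4_1} treated as fixed and known. The iteration \eqref{section4_8}--\eqref{section4_9} is then a policy iteration for the single equation \eqref{S_1} in the unknown $S$. The relevant Lyapunov-type operator is
\[
\bar{\mathcal{L}}_i(X)\triangleq(\bar{A}^i)^TX\bar{A}^i .
\]
The term $\sigma^2(\bar{C}^i)^TP\bar{C}^i$ in \eqref{section4_8} does not involve $S^i$, so it acts as part of the forcing term. A gain $\bar K^i$ is therefore stabilizing in the sense needed (the mean part $\mathbb{E}y$ of \eqref{e_def_1}) exactly when $\rho(\bar{\mathcal{L}}_i)<1$.

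First I would rewrite \eqref{section4_9} in its explicit form. Writing $\bar{W}(P,S^i)=R+B^TS^iB+\sigma^2D^TPD$ and $\bar H(P,S^i)=B^TS^iA+\sigma^2D^TPC$, it reads $\bar K^{i+1}=-\bar W(P,S^i)^{-1}\bar H(P,S^i)$. This is well defined because $\bar{W}>0$ along the iteration; that positivity comes from Assumption \ref{assum_stab} (the $(2,2)$ block of $\bar{\mathcal H}>0$) together with $S\le S^i$. Similarly $\bar K=-\bar W(P,S)^{-1}\bar H(P,S)$.

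Next I would derive two completion-of-squares identities, exactly as in \eqref{pf2_2} and \eqref{pf2_2_1}. Writing $\bar{A}^{i}=\bar A^{i+1}+B(\bar K^i-\bar K^{i+1})$, and similarly for $\bar C$, the cross terms cancel by the optimality of $\bar K^{i+1}$. This gives
\[
S^{i}-S^{i+1}=(\bar A^{i+1})^T(S^i-S^{i+1})\bar A^{i+1}+(\bar K^i-\bar K^{i+1})^T\bar W(P,S^i)(\bar K^i-\bar K^{i+1}).
\]
Subtracting \eqref{S_1} in the same way gives
\[
S^i-S=(\bar A^i)^T(S^i-S)\bar A^i+(\bar K^i-\bar K)^T\bar W(P,S)(\bar K^i-\bar K).
\]

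Stabilizability (1) is then proved by induction, as in Theorem \ref{thm5}. Assume $\bar K^{i-1}$ is stabilizing, so $S^{i-1}-S\ge0$ from the second identity. Rewrite that identity with $\bar A^{i}$ in place of $\bar A^{i-1}$, obtaining
\[
(\bar A^i)^T(S^{i-1}-S)\bar A^i-(S^{i-1}-S)=-(\bar K^i-\bar K)^T\bar W(P,S)(\bar K^i-\bar K)-(\bar K^{i-1}-\bar K^i)^T\bar W(P,S^{i-1})(\bar K^{i-1}-\bar K^i).
\]
Now suppose $\bar{\mathcal L}_i(X)=\lambda X$ with $\lambda\ge1$. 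Take the adjoint eigenvector $X\ge0$, which exists by the Krein--Rutman property of the positive operator. Pairing with $X$ via the trace shows both quadratic terms vanish on $X$, so $\bar{\mathcal L}_{i-1}$ has the same eigenvalue, a contradiction.

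Monotonicity (2) follows because $\bar K^{i+1}$ is stabilizing and the forcing term in the first identity is $\ge0$, so $S^i\ge S^{i+1}$. The second identity gives $S^{i+1}\ge S$. Convergence (3) follows since a monotone sequence bounded below converges, say $S^i\to S^\infty$. Then $\bar K^i\to\bar K^\infty$ by continuity of the inverse, and the limit satisfies \eqref{S_1}. With $\bar P^i\triangleq S^i-P$, the computation of \eqref{pf2_4} turns this into \eqref{section4_2}.

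The main obstacle is identifying $S^\infty=S$, since indefinite $Q,R$ prevent a plain uniqueness claim. I would argue that $S^\infty\ge S$ and the limit gain $\bar K^\infty$ is stabilizing. That requires a limiting argument: $S^\infty$ satisfies \eqref{S_1} with $\bar K^\infty$, and $S$ is the maximal, stabilizing solution from \cite[Proposition 3.2]{ni2015indefinite}. Subtracting the two Riccati equations then gives $(S^\infty-S)-(\bar A^\infty)^T(S^\infty-S)\bar A^\infty=0$ whenever $\bar K^\infty=\bar K$. The key step is to show this directly: $\bar W(P,S)>0$ and the identity above with $i\to\infty$ force $\bar K^\infty=\bar K$, and the stable Lyapunov equation then yields $S^\infty=S$.
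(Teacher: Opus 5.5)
Your route is the paper's. You use the same operator $\bar{\mathcal L}_i(X)=(\bar A^i)^TX\bar A^i$ and the same completion-of-squares identities (the paper's \eqref{section_proof_s1} and \eqref{section_proof_s4}). You also use the same spectral-contradiction induction for (1) and monotone convergence for (2)--(3). Taking a positive semidefinite eigenvector of the adjoint $\bar{\mathcal L}_i^*$ is exactly what the trace pairing needs, and it is cleaner than the paper's wording. One small correction: the $(2,2)$ block of $\bar{\mathcal H}>0$ gives $\bar W(P',S')>0$ only at some $(P',S')\in\bar{\mathcal M}$, not at the Riccati solution. To get $\bar W(P,S)>0$ you need $P\ge P'$ and $S\ge S'$ (maximality in the cited result), or you must take $\bar W(P,S)>0$ as part of what the cited lemma supplies.

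The genuine gap is your final identification of $S^\infty$. The identity you plan to pass to the limit is
\[
(S^\infty-S)-(\bar A^\infty)^T(S^\infty-S)\bar A^\infty=(\bar K^\infty-\bar K)^T\bar W(P,S)(\bar K^\infty-\bar K).
\]
It cannot force $\bar K^\infty=\bar K$. If $\bar A^\infty$ is stable it only re-derives $S^\infty-S\ge 0$, which holds for any $\bar K^\infty$. Nor is $\bar K^\infty$ automatically a stabilizer: a limit of stabilizers only gives $\rho(\bar A^\infty)\le 1$.

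What is missing is the comparison in the opposite direction, through the closed loop $\bar A=A+B\bar K$ of the stabilizing solution, which has $\rho(\bar A)<1$. Note that $\bar K^\infty=-\bar W(P,S^\infty)^{-1}\bar H(P,S^\infty)$ minimizes $L\mapsto(A+BL)^TS^\infty(A+BL)+\sigma^2(C+DL)^TP(C+DL)+L^TRL$. Evaluate this map at $L=\bar K$ in the limit of \eqref{section4_8} and subtract \eqref{S_1}. This gives
\[
S^\infty-S=\bar A^T(S^\infty-S)\bar A-(\bar K^\infty-\bar K)^T\bar W(P,S^\infty)(\bar K^\infty-\bar K).
\]
Since $\rho(\bar A)<1$ and $\bar W(P,S^\infty)\ge\bar W(P,S)>0$, this yields $S^\infty\le S$. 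Together with $S^\infty\ge S$ you get $S^\infty=S$. Then $\bar K^\infty=\bar K$ by continuity, so the limit gain is stabilizing after the fact, and $\bar P^i=S^i-P\to\bar P$.

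This is the standard fact that the stabilizing solution is maximal, which you mention but never use. The paper itself only asserts this step (``$S$ satisfies \eqref{section4_8} with $\bar K^i=\bar K$''). With this repair your argument would be more complete than the published one.
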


\emph{Proof.}
The proof of Theorem \ref{thm6} is similar to that of Theorem \ref{thm5},
    using the assumed initial stabilizer $\bar{K}^0$ and mathematical induction.    
The Lyapunov-type operator is defined as 
$\medmath{
\bar{\mathcal{L}}_i(X) \triangleq (A + B \bar{K}^{i})^T X (A + B \bar{K}^{i}) .
	}
$    
Similarly, combining \eqref{S_1} with \eqref{section4_8}, one has
 \begin{equation}\medmath{
    \begin{aligned} 
         & {(\bar{A}^{i-1})}^T(S^{i-1} - S)\bar{A}^{i-1} - (S^{i-1} - S)  \\
         = &-{(\bar{K}  - \bar{K}^{i-1} )}^T( R + B^TSB + \sigma^2 D^TPD ) (\bar{K}  - \bar{K}^{i-1} ),
    \end{aligned}} \label{section_proof_s1}
\end{equation} 
or equivalently,
\begin{equation}
    \begin{aligned} 
        &( (\bar{A}^i)^T \otimes (\bar{A}^i)^T - 1 ) vec(S^{i-1} - S) \\
       =  & - [{(\bar{K} - \bar{K}^{i})}^T \otimes {(\bar{K} - \bar{K}^{i})}^T] \\
        &\times vec(R + B^TSB + \sigma^2 D^TPD)\\
        &- [{(\bar{K}^{i-1} - \bar{K}^{i})}^T \otimes {(\bar{K}^{i-1} - \bar{K}^{i})}^T] \\
        & \times vec(R + B^TS^{i-1}B + \sigma^2 D^TP D).
    \end{aligned} \label{section_proof_s2}
\end{equation}
If $i=0$, the initial conditon $\bar{K}^0$ ensures result (1) holds.
Suppose the result (1) holds for $i-1 > 0$, i.e., $\bar{K}^{i - 1}$ is a stabilizer. Then it follows from \eqref{section_proof_s1} that $S^{i-1} - S \geq 0$.
Assume there exists $\bar{\lambda} \in \sigma(\bar{\mathcal{L}}_i(X))$ such that $\bar{\lambda} \geq 1$,
where $\sigma(\bar{\mathcal{L}}_i(X))$ denotes the spectrum of the Lyapunov-type operator $\bar{\mathcal{L}}_i(X)$.
From \eqref{section_proof_s2}, one has
\begin{equation}
    \begin{aligned} 
        & (\bar{\lambda} - 1 ) Tr [(S^{i-1} - S)X] \\
        = & - Tr  [X^{1/2}{(\bar{K} - \bar{K}^{i})}^T (R + B^TSB + \sigma^2 D^TPD) \\
        &\times {(\bar{K} - \bar{K}^{i})}X^{1/2} 
        + X^{1/2}{(\bar{K}^{i-1} - \bar{K}^{i})}^T \times (R \\
        & + B^TS^{i-1}B + \sigma^2 D^TP D) {(\bar{K}^{i-1} - \bar{K}^{i})} X^{1/2} ].
    \end{aligned} \nonumber
\end{equation}
Above equation  implies ${(\bar{K}^{i-1} - \bar{K}^{i})}X=0$. 
Consequently, there exists $\lambda \in \sigma(\mathcal{L}_{i-1}(X))$ such that $\lambda \geq 1$.
It contradicts with the induction assumption. Therefore, the result (1) holds for $i \in \mathbb{N}^+ $.
From \eqref{section4_8}, one has
\begin{equation}
    \begin{aligned} 
        S^{i-1}-S^{i}=&{(\bar{A}^{i})}^T (S^{i-1}-S^{i}) \bar{A}^{i} \\ 
        &+(\bar{K}^{i-1}-\bar{K}^{i})^T (R+B^T S^{i-1} B \\
        & +\sigma^2 D^TPD)(\bar{K}^{i-1}-\bar{K}^{i})  . 
    \end{aligned} \label{section_proof_s4}
\end{equation}
Since $\bar{K}^{i}$ is a stabilizer and $R + B^T S^{i-1}B + \sigma^2 D^T P D > 0$, 
one has $S^{i-1}-S^{i} \geq 0$. The result (2) holds for $i \in \mathbb{N}^+$.

Therefore, the sequence  $\{ S^i \}_{i=0}^{\infty}$ is monotonically decreasing and has a lower bound. 
This sequence has a limit.
Since $\bar{K}^{i}$ is the unique solution of \eqref{section4_9},
the convergence of $\{ \bar{K}^i \}_{i=0}^{\infty}$  can be deduced from the convergence of the sequence $\{ S^i \}_{i=0}^{\infty}$. 
Besides,
$S$ satisfies \eqref{section4_8} with $\bar{K}^i = \bar{K}$. 
This implies that
$\lim\limits_{i\to+\infty} S^i = S$
and $\lim\limits_{i\to+\infty} \bar{K}^i = \bar{K}$.
By Theorem \ref{thm5}  and ARE \eqref{section4_2} has a unique solution, one has
$\lim\limits_{i\to+\infty} \bar{P}^i = \bar{P}$.  
This completes the proof. 
\rightline{{\hfill{$\Box$}}}

\begin{rem}
To address the selection issue of initial gains of \eqref{th4_1}-\eqref{th4_2} and \eqref{section4_8}-\eqref{section4_9}, 
which are necessary to start the policy iteartion in the infinite-horizon case,
we use the data-driven approach based on system identification.
According to \cite[Theorem 6]{cui2024robust} or \cite[Theorem 1]{rami2000linear},
when the number of samples is large, 
we can compute the initial stabilizing gains  $\hat{K}^0 = -\mathcal{Y} \mathcal{X}^{-1}$ and $\hat{\bar{K}}^0 = -\bar{\mathcal{Y}} \bar{\mathcal{X}}^{-1}$ 
by solving the following linear matrix inequalities (LMIs)
\begin{equation} \medmath{
    \begin{bmatrix}
    -\mathcal{X}  & \hat{A}\mathcal{X}+\hat{B}\mathcal{Y} & C\mathcal{X}+D\mathcal{Y} \\
    \mathcal{X}\hat{A}^T+\mathcal{Y}^T\hat{B}^T & \mathcal{X}               & 0              \\
    \mathcal{X}C^T+\mathcal{Y}^T D^T & 0                         & \mathcal{X} 
    \end{bmatrix} < 0,} \nonumber
\end{equation} 
\begin{equation} \medmath{
    \begin{bmatrix}
    -\bar{\mathcal{X}}  & \hat{A}\bar{\mathcal{X}}+\hat{B}\bar{\mathcal{Y}} & C\bar{\mathcal{X}}+D\bar{\mathcal{Y}} \\
    \bar{\mathcal{X}}\hat{A}^T+\bar{\mathcal{Y}}^T\hat{B}^T & \bar{\mathcal{X}}             & 0              \\
    \bar{\mathcal{X}}C^T+\bar{\mathcal{Y}}^T D^T & 0                         & \bar{\mathcal{X}} 
    \end{bmatrix} < 0.} \nonumber
\end{equation} 
\end{rem}

\begin{rem}
The initial stabilizing gains  of algorithms are not unique.
The above inequalities may have feasible solutions in different directions, which
may give rise to different initial stabilizing gains \cite{rami2000linear}.
LMIs are powerful tools for analyzing and solving control problems, 
particularly those related to Riccati equations, stability, well-posedness, and robustness.  
The feasibility of an LMI condition is shown to be equivalent to the solvability of 
the generalized difference Riccati equation, the well-posedness, and the attainability of the LQG problem \cite{rami2002discrete}.
\end{rem} 

\subsection{Gain matrix approximation with unknown dynamics}

Similar to Section 3.2, we obtain the following design of model-free algorithm and convergence analysis.
By Kronecker product representation, we have
\begin{equation}\medmath{
    \begin{aligned} 
        & \mathbb{E} \Big\{\tilde{x}^T_j(k+1) - \tilde{x}_j^T(k)  \Big\} vecs(P^i) \\
        =& \mathbb{E} \Big\{ -x_j^T(k) \big[ Q+\frac{1}{N}(\Gamma^TQ\Gamma-Q\Gamma-\Gamma^TQ)  
         +{(K^{i})}^TRK^{i} \big] x_j(k) \Big\} \\
        & +2 \mathbb{E} \Big\{ x_j^T(k) \otimes (u_j(k)-K^{i} x_j(k))^T \Big\} vec(\Lambda_{2}^{j}) \\
        &+ \mathbb{E} \Big\{ \tilde{u}_j(k)-\widetilde{K^{i} x_j(k)} \Big\}^T vec(\Lambda_{1}^{j}),
    \end{aligned}} \nonumber
\end{equation}
and 

\begin{equation}\medmath{
\begin{aligned}
    & [ \tilde{\bar{x}}_{j}^T(k+1)  - \tilde{\bar{x}}_{j}^T(k) ] vecs(\bar{P}^i) \\
    =& - \bar{x}_{j}^T(k) \big[\frac{N-1}{N}(\Gamma^TQ\Gamma-Q\Gamma-\Gamma^TQ)  
         + {{} {\Lambda}_{2} }^{T} (R + {\Lambda}_{1} )^{-1} {\Lambda}_{2}  \\
        &- (\bar{K} ^{i})^T (R + {\Lambda}_{1} ) \bar{K} ^{i} \big]\bar{x}_{j}(k)  
         + (\tilde{\bar u}_j(k)+\widetilde{{\bar K} ^{j } {\bar x}_j(k)})  ^T vecs(\bar{\Lambda}_{1}^{i}) \\
        &+ 2 ({\bar x}_{j}^T(k) \otimes {\bar u}^T_{j}(k)) vec(\bar{\Lambda}_{2}^{i}).
\end{aligned}}  \nonumber
\end{equation} 
 
Based on above equations, we can design the model-free algorithm and analyze its convergence (Theorems \ref{thm7} and \ref{thm8}).
For the infinite-horizon model-free algorithm,
the new data matrices need to be defined as follows
\begin{equation} \medmath{
    \left\{ 
        \begin{aligned}
        & \mathcal{N}_{x_{j}} \triangleq \Big[ I^{1},I^{2},\cdots,I^{l} \Big]^{\mathrm{T}},I\triangleq \mathbb{E}( \tilde{x}_{j}(k) - \tilde{x}_{j}(k+1) ),\\
        & \Xi \triangleq \Big[ \xi^{1},\xi^{2},\cdots,\xi^{l} \Big]^{\mathrm{T}},\\
        & \xi \triangleq \mathbb{E} \Big\{ x^T_{j} (k) \Big[ Q +\frac{1}{N}(\Gamma^TQ\Gamma-Q\Gamma-\Gamma^TQ) \\
        & +{(K^{i})}^TRK^{i} \Big] x_{j} (k) \Big\},\\
        &\mathcal{I}_{x_{j}} \triangleq \Big[ I_{x_{j} x_{j}}^{ 1},I_{x_{j} x_{j}}^{ 2},\cdots,I_{x_{j} x_{j}}^{ l} \Big]^{\mathrm{T}},I_{x_{j} x_{j}} \triangleq \mathbb{E}(\tilde{x}_{j}(k)),\\
        &\mathcal{I}_{u_{j}} \triangleq \Big[ I_{u_{j} u_{j}}^{ 1},I_{u_{j} u_{j}}^{ 2},\cdots,I_{u_{j} u_{j}}^{ l} \Big]^{\mathrm{T}},I_{u_{j} u_{j}} \triangleq \mathbb{E}(\tilde{u}_{j}(k)),\\
        &\mathcal{I}_{x_{j} u_{j}K} \triangleq \Big[ I_{x_{j} u_{j}K}^{ 1},I_{x_{j} u_{j}K}^{ 2},\cdots,I_{x_{j} u_{j}K}^{ l} \Big]^{\mathrm{T}},\\
        &I_{x_{j} u_{j}K} \triangleq \mathbb{E} \Big\{x_{j}^T(k) \otimes (u_{j}(k)-K^{i} x_{j}(k))^T \Big\}^T,\\
        &\mathcal{I}_{u_{j}x_{j} } \triangleq \Big[ I_{u_{j}x_{j}  }^{ 1},I_{u_{j} x_{j}}^{ 2},\cdots,I_{u_{j}x_{j} }^{ l} \Big]^{\mathrm{T}},\\
        &I_{u_{j}x_{j} } \triangleq \mathbb{E} \Big\{  \tilde{u}_{j}(k) - \widetilde{K^{i} x_{j}(k)} \Big\},\\
        &  {\mathcal{I}}_{ {x}_{j}  {u}_{j} } \triangleq \Big[  {I}_{ {x}_{j}  {u}_{j} }^{ 1},  {I}_{ {x}_{j}  {u}_{j} }^{ 2},\cdots,  {I}_{ {x}_{j}  {u}_{j} }^{ l} \Big]^{\mathrm{T}}, \\
        &  {I}_{ {x}_{j}  {u}_{j} } \triangleq  \Big\{  {x}^T_j(k) \otimes   {u}^T_j(k)  \Big\}^T,          
    \end{aligned}
    \right.} \nonumber
\end{equation}

and
   \begin{equation}\medmath{
        \left\{ 
            \begin{aligned}
            &\mathcal{N}_{\bar{x}}  \triangleq \Big[ \bar{I}^{1},\bar{I}^{2},\cdots,\bar{I}^{l} \Big]^{\mathrm{T}},
            \bar{I}\triangleq  \tilde{\bar{x}}_{j}(k)  -  \tilde{\bar{x}}_{j}(k+1) ,\\
            &\bar{\Xi}  \triangleq \Big[\bar{\xi}^{1},\bar{\xi}^{2},\cdots,\bar{\xi}^{l} \Big]^{\mathrm{T}},\\
            &\bar{\xi} \triangleq \bar{x}_j^T(k) \Big[ \frac{N-1}{N}(\Gamma^TQ\Gamma-Q\Gamma-\Gamma^TQ) \\
            &+ {{} {\Lambda}_{2} }^{T} (R + {\Lambda}_{1} )^{-1} {\Lambda}_{2}   
             - (\bar{K} ^{i})^T (R + {\Lambda}_{1} ) \bar{K} ^{i}  \Big]\bar{x}_j(k), \\ 
        & \bar{\mathcal{I}}_{\bar{x}} \triangleq \Big[ \bar{I}_{\bar{x}}^{ 1}, \bar{I}_{\bar{x}}^{ 2},\cdots, \bar{I}_{\bar{x}}^{ l} \Big] ^{\mathrm{T}}, 
        \bar{I}_{\bar{x}} \triangleq  \tilde{\bar{x}}_j(k) ,\\        
        & \bar{\mathcal{I}}_{\bar{u}} \triangleq \Big[ \bar{I}_{\bar{u}}^{ 1}, \bar{I}_{\bar{u}}^{ 2},\cdots, \bar{I}_{\bar{u}}^{ l} \Big] ^{\mathrm{T}}, 
        \bar{I}_{\bar{u}} \triangleq  \tilde{\bar{u}}_j(k) ,\\ 
        & \bar{\mathcal{I}}_{ \bar{K} \bar{x} \bar{u}} \triangleq \Big[ \bar{I}_{ \bar{K} \bar{x}\bar{u}}^{ 1}, \bar{I}_{ \bar{K} \bar{x}\bar{u}}^{ 2},\cdots, \bar{I}_{ \bar{K} \bar{x}\bar{u}}^{ l} \Big]^{\mathrm{T}}, \\
        & I_{\bar{K} \bar{x}\bar{u}} \triangleq  \widetilde{   \bar{K}_{k}^{i} \bar{x}_j (k)} + \tilde{\bar{u}}_j(k)   ,\\        
        & \bar{\mathcal{I}}_{\bar{x} \bar{u}} \triangleq \Big[ \bar{I}_{\bar{x} \bar{u}}^{ 1}, \bar{I}_{\bar{x} \bar{u}}^{ 2},\cdots, \bar{I}_{\bar{x} \bar{u}}^{ l} \Big]^{\mathrm{T}}, \\
        & \bar{I}_{\bar{x} \bar{u}} \triangleq  \Big\{ \bar{x}^T_j(k) \otimes  \bar{u}^T_j(k)  \Big\}^T.               
            \end{aligned}\nonumber
            \right.}  
    \end{equation} 

\begin{assum} \label{ass1_infinite}
    There exists an $l_{1} >0$, such that, for all $l \geq l_{1}$, 
     \begin{equation}
        \text{rank}\left(\begin{bmatrix}
        \mathcal{I}_{x_{j}}, \,\, \mathcal{I}_{x_{j} u_{j} } , \,\, \mathcal{I}_{u_{j}}     
        \end{bmatrix}\right)
        =\frac{n}{2}(n+1)+mn+\frac{m}{2}(m+1). \nonumber
        \end{equation} 
\end{assum}

\begin{thm}  \label{thm7}
    Suppose Assumption \ref{ass1_infinite} holds.  $P$ is the positive semi-definite solution to the ARE \eqref{section4_1}.
    The sequence $\{ P^i, \Lambda_1^{i}, \Lambda_2^{i}  \}_{i=1}^{\infty}$  
    generated by recursively solving the following equation
    \begin{equation}
    \begin{bmatrix} vecs(P^i)   \\ vec(\Lambda_2^{i})  \\ vecs(\Lambda_1^{i}) \end{bmatrix}
    = (\Psi^T \Psi)^{-1} \Psi^T \Xi, \label{infinite_1_model_free_theorem}
\end{equation}
satisfies $\lim\limits_{ i \to \infty } P^i = P $,  
where 
$\Psi  \triangleq [\mathcal{N}_{x_{j}}, 2\mathcal{I}_{x_{j} u_{j}K}, \mathcal{I}_{u_{j}x_{j} }]$. 
All the control gains $K^{i+1} = -(R+\Lambda_{1}^{i})^{-1}\Lambda_{2}^{i} $ are  stabilizers.
\end{thm}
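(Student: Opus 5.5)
The plan is to reduce Theorem \ref{thm7} to the model-based result, Theorem \ref{thm5}, in the same way Theorem \ref{thm3} was reduced to Theorem \ref{thm1}. The argument has three parts.

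\emph{Equivalence with the Lyapunov/policy-improvement iteration.} Fix the current stabilizing gain $K^i$ and write $u_j(k)=K^ix_j(k)+(u_j(k)-K^ix_j(k))$ in the dynamics \eqref{e1_1}. Expanding $\mathbb{E}\{x_j^T(k+1)P^ix_j(k+1)-x_j^T(k)P^ix_j(k)\}$ along the behavior trajectory, and using \eqref{th4_1}, gives exactly the Kronecker identity displayed before the data matrices. In that identity
$$\Lambda_1^i=B^TP^iB+\sigma^2D^TP^iD,\qquad \Lambda_2^i=B^TP^iA+\sigma^2D^TP^iC.$$
The paper's $\Lambda_2$ convention differs by the factor $-(R+\Lambda_1)K$. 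I will use the form for which $K^{i+1}=-(R+\Lambda_1^i)^{-1}\Lambda_2^i$ reproduces \eqref{th4_2}, and the identification goes through either way. Stacking $l$ samples yields $\Psi[\operatorname{vecs}(P^i);\operatorname{vec}(\Lambda_2^i);\operatorname{vecs}(\Lambda_1^i)]=\Xi$. The true triple $(P^i,\Lambda_1^i,\Lambda_2^i)$ from Theorem \ref{thm5} is therefore a solution. If $\Psi$ has full column rank, the least-squares formula \eqref{infinite_1_model_free_theorem} returns exactly this triple. Consequently $K^{i+1}$ computed from the data coincides with \eqref{th4_2}.

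\emph{Full column rank of $\Psi$.} Argue by contradiction, as in the proof of Theorem \ref{thm3}. Suppose $\Psi H=0$ with $H=[\operatorname{vecs}(X);\operatorname{vec}(Y);\operatorname{vecs}(Z)]\neq0$. Expand each column in terms of $\tilde x_j(k)$, $x_j\otimes u_j$ and $\tilde u_j$:
\begin{itemize}
\item the $2\mathcal{I}_{x_ju_jK}$ block produces $2Y$ on the cross term and $-2Y^TK^i$ (symmetrized) on the $x$-quadratic term;
\item the $\mathcal{I}_{u_jx_j}$ block produces $Z$ on $\tilde u$, a cross term, and $-(K^i)^TZK^i$ on $\tilde x$.
\end{itemize}
The new feature relative to Theorem \ref{thm3} is that $\mathcal{N}_{x_j}$ contains $\tilde x_j(k)-\tilde x_j(k+1)$, not just $\tilde x_j(k)$. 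I will substitute the dynamics inside the expectation: $\mathbb{E}\,x_j(k+1)x_j(k+1)^T$ is a quadratic form in $(x_j(k),u_j(k))$ with coefficient matrices built from $A,B,C,D$ and $\sigma^2$. This rewrites $\mathcal{N}_{x_j}\operatorname{vecs}(X)$ as
$$\mathcal{I}_{x_j}\operatorname{vecs}\bigl(X-A^TXA-\sigma^2C^TXC\bigr)-\mathcal{I}_{x_ju_j}\operatorname{vec}\bigl(2(B^TXA+\sigma^2D^TXC)\bigr)-\mathcal{I}_{u_j}\operatorname{vecs}\bigl(B^TXB+\sigma^2D^TXD\bigr).$$
Assumption \ref{ass1_infinite} then forces each collected coefficient matrix $M_1,M_2,M_3$ to vanish. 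Working back from $M_3=0$ and $M_2=0$ gives $Z$ and $Y$ in terms of $X$. Substituting into $M_1=0$ gives
$$X=(A^i)^TXA^i+\sigma^2(C^i)^TXC^i,$$
that is, $\mathcal{L}_i(X)=X$.

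\emph{Stability and convergence.} This last step is the main obstacle. It needs $1\notin\sigma(\mathcal{L}_i)$, which holds because $K^i$ is a stabilizer, i.e.\ the spectral radius of $\mathcal{L}_i$ is below one. That is part (1) of Theorem \ref{thm5}, established inductively from the stabilizing $K^0$. Hence $X=0$, and then $Y=Z=0$, contradicting $H\neq0$. This gives the rank property at every iteration. With the rank property, the data-driven iterates equal the model-based ones, so by induction every $K^{i+1}=-(R+\Lambda_1^i)^{-1}\Lambda_2^i$ is a stabilizer. Theorem \ref{thm5}(2)--(3) then yields $\lim_{i\to\infty}P^i=P$.
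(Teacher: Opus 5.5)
Your argument is correct. It shares the paper's core computation: substitute the dynamics into $\tilde x_j(k+1)$ to rewrite $\mathcal{N}_{x_j}\operatorname{vecs}(X)$ through $\mathcal{I}_{x_j},\mathcal{I}_{x_ju_j},\mathcal{I}_{u_j}$, then let Assumption~\ref{ass1_infinite} force $M_1=M_2=M_3=0$. You deploy it differently, though.

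\textbf{Paper's route.} The paper applies the computation to the inhomogeneous system $\Psi H=\Xi$. It reads off that $X$ satisfies \eqref{th4_1} and that $Y,Z$ reproduce \eqref{th4_2}. It then re-runs a spectral contradiction for $\mathcal{L}_{i+1}$ to show $K^{i+1}$ is a stabilizer, and finally cites Theorem~\ref{thm5}.

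\textbf{Your route.} You first check that the model-based triple solves $\Psi H=\Xi$. You then apply the computation to $\Psi H=0$, obtaining $\mathcal{L}_i(X)=X$, and conclude $H=0$ from $\rho(\mathcal{L}_i)<1$. Stability of every $K^{i+1}$ and convergence are then inherited from Theorem~\ref{thm5} by induction.

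\textbf{What your route buys.} First, it actually proves that $\Psi^T\Psi$ is invertible, which the formula \eqref{infinite_1_model_free_theorem} requires. The rank assumption alone yields only $M_1=M_2=M_3=0$. Identifying $X$ with $P^i$ from there needs uniqueness of the Lyapunov solution, i.e.\ exactly $1\notin\sigma(\mathcal{L}_i)$, and the paper leaves this implicit.

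Second, you defer to Theorem~\ref{thm5}(1), whose induction works with $P^{i-1}-P\ge 0$. This sidesteps the paper's own stability step, which relies on the sign of $P^i-P^{i+1}$; that sign is only available once $K^{i+1}$ is already known to be stabilizing. The paper's version is merely more self-contained. Since the learned and model-based sequences coincide exactly, its separate stability argument adds nothing.

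A minor slip: the $\mathcal{I}_{u_jx_j}$ block is $\tilde u_j-\widetilde{K^ix_j}$, so it contributes no cross term. This does not affect the conclusion $\mathcal{L}_i(X)=X$.
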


\emph{Proof.} 
The iteration is initialized with a stabilizer $K^0$. 
Assuming  $K^i$ is a stabilizer, we prove that $K^{i+1}$ is also a stabilizer.
Define a nonzero vector $H=[X_v^T,Y_v^T,Z_v^T]^T$, 
where 
$X_v= vecs(X) \in \mathbb{R}^ {\frac{n}{2}(n+1)}, $ 
$Y_v=vec(Y) \in \mathbb{R}^{mn},  $
$Z_v= vecs(Z) \in \mathbb{R}^{\frac{m}{2}(m+1)} $.
From \eqref{e1_1}, we obtain
\begin{equation}
    \begin{aligned}
 \mathbb{E}&[\tilde{x}^T_{j}(k+1) X_v] = \mathbb{E} \big[ \tilde{x}^T_{j}(k) vecs(A^T X A + \sigma^2 C^T X C) \\
 &+ 2 (x_j^T(k) \otimes u_j^T(k)) vec(B^T X A + \sigma^2 D^T X C) \\
 &+ \tilde{u}^T_{j}(k) vecs(B^T X B + \sigma^2 D^T X D) \big].
    \end{aligned} \nonumber
\end{equation}
Considering the equation $\Psi H - \Xi = 0$, one has
\begin{equation}
    \begin{aligned}
 \Big[\mathcal{I}_{x_{j}},\, \mathcal{I}_{x_{j} u_{j}},\, \mathcal{I}_{u_{j}} \Big] \begin{bmatrix} vecs(M_1)   \\ vec(M_2)  \\ vecs(M_3) \end{bmatrix}
    = 0,
    \end{aligned} \label{infinite_1_model_free}
\end{equation} 
where 
\begin{equation}\medmath{
    \left\{
    \begin{aligned}
        M_1 &= X - {(K^{i})}^TY-Y^T{K^{i}} -{(K^{i})}^TZ{K^{i}} \\
        &-A^T X A - \sigma^2 C^T X C - \Big[ Q +\frac{1}{N}(\Gamma^TQ\Gamma-Q\Gamma \\
        & -\Gamma^TQ)+{(K^{i})}^TRK^{i} \Big], \\
        M_2 &= 2(Y - B^T X A - \sigma^2 D^T X C),  \\
        M_3 &= Z - B^T X B - \sigma^2 D^T X D.    
    \end{aligned}    
    \right.} \nonumber
\end{equation}
Since Assumption \ref{ass1_infinite} holds,
the equation \eqref{infinite_1_model_free} has a unique solution. Therefore, we have $ {M}_1 = 0,  {M}_2 = 0,  {M}_3 = 0$.
Substituting $Y = B^T X A + \sigma^2 D^T X C$, $Z = B^T X B + \sigma^2 D^T X D$ into $M_1=0$, one has
  \begin{equation}\medmath{
    \begin{aligned}
      X&=  (A + B K^{i})^T X (A + B K^{i})  \\
      &+ \sigma^2 (C + D K^{i})^T X (C + D K^{i}) \\
      &+ Q +\frac{1}{N}(\Gamma^TQ\Gamma-Q\Gamma  -\Gamma^TQ)+{(K^{i})}^TRK^{i} . \nonumber
    \end{aligned}}
\end{equation}
Let $X = P^{i}$, $Y = \Lambda_1^{i}$ and $Z = \Lambda_2^{i}$. Then the equation $\Psi H - \Xi = 0$ or the above equation becomes \eqref{infinite_1_model_free_theorem},
which is the same as \eqref{th4_1}. 
The control gain is updated as $ K^{i+1} = -(R+\Lambda_{1}^{i})^{-1}\Lambda_{2}^{i} $, 
which coincides with \eqref{th4_2}.  
Using the Lyapunov-type operator and \eqref{section4_1}, we have  
$\mathcal{L}_{i} (P^{i} - P ) - (P^{i} - P ) = - (K^{i}-K )^T (R+B^T P B +\sigma^2 D^T P D)(K^{i}-K )$,
which implies that $P^{i} - P  \geq 0$.
From \eqref{section4_7}, one has
\begin{equation}
    \begin{aligned} 
        \mathcal{L}_{i+1} &(P^{i} - P^{i+1}) - (P^{i} - P^{i+1}) = - (K^{i}-K^{i+1})^T \\
        &\times (R+B^TP^{i}B +\sigma^2 D^TP^{i}D)(K^{i}-K^{i+1}). 
    \end{aligned} \nonumber
\end{equation}
Assume there exists $\lambda \in \sigma(\mathcal{L}_{i + 1}(\mathcal{X}))$ such that $\lambda \geq 1$.
Then, we have
\begin{equation}
    \begin{aligned} 
        & (\lambda - 1 ) Tr [(P^{i} - P^{i+1})\mathcal{X}] = - Tr  [ \mathcal{X}^{1/2}{(K^{i}-K^{i+1})}^T \\
        & \times (R  + B^TP^{i}B + \sigma^2 D^TP^{i}D) {(K^{i}-K^{i+1})} \mathcal{X}^{1/2} ].
    \end{aligned} \nonumber
\end{equation}
The above equation implies that $(K^{i}-K^{i+1}) \mathcal{X}=0$. 
Consequently, this indicates that $\lambda \geq 1$ is also in the spectrum of the previous operator $ \sigma(\mathcal{L}_{i}(\mathcal{X}))$.
It contradicts with the induction assumption.
Therefore, the gain $K^{i+1}$ updated by \eqref{infinite_1_model_free_theorem} is a stabilizer for all $i \in \mathbb{N}$.
By Theorem \ref{thm5}, the convergence is proved.
{{\hfill{$\Box$}}}

\begin{assum} \label{ass2_infinite}
    There exists an $l_2 > 0$, such that, for all $l \geq l_2$, 
    \begin{equation}
        \text{rank}\left(\begin{bmatrix}
            \bar{\mathcal{I}}_{\bar{x}}, \,\, \bar{\mathcal{I}}_{\bar{x} \bar{u}}, \,\, \bar{\mathcal{I}}_{\bar{u}}   
        \end{bmatrix}\right)
    =\frac{n}{2}(n+1)+ mn+\frac{m}{2}(m+1). \nonumber
    \end{equation} 
    \end{assum}

    \begin{thm}  \label{thm8}
        Suppose Assumption \ref{ass2_infinite} holds. $\bar{P}$ is the positive semi-definite solution to the ARE \eqref{section4_2}. 
        The sequence $\{ S^i, \bar{\Lambda}_1^{i}, \bar{\Lambda}_2^{i}  \}_{i=1}^{\infty}$  generated by recursively solving the following equation
    \begin{equation}
        \begin{bmatrix} vecs(\bar{P}^i)   \\ vec( \bar{\Lambda}_2^{i})  \\ vecs( \bar{\Lambda}_1^{i}) \end{bmatrix}
        =(\bar{\Psi}^T \bar{\Psi})^{-1} \bar{\Psi}^T \bar{\Xi}, \label{infinite_2_model_free_theorem}
    \end{equation} 
    \end{thm}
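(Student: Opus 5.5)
The plan mirrors the proof of Theorem \ref{thm7}, with the model-based convergence supplied by Theorem \ref{thm6}. The intended conclusion is that $\lim_{i\to\infty}\bar{P}^i=\bar{P}$ (equivalently $S^i=P+\bar{P}^i\to S$) and that every gain $\bar{K}^{i+1}$ generated from $\bar{\Lambda}_1^i,\bar{\Lambda}_2^i$ is a stabilizer. The argument has two parts. First, the least-squares problem \eqref{infinite_2_model_free_theorem} must have a unique solution, and that solution must coincide with the model-based iterate of \eqref{section4_8}--\eqref{section4_9}. Second, the stabilizing and monotone properties must be transferred from the model-based scheme.

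\textbf{Step 1: the least-squares solution is the model-based iterate.} Take a candidate vector $\bar H=[vecs(X)^T, vec(Y)^T, vecs(Z)^T]^T$ with $\bar\Psi\bar H=\bar\Xi$. Expand $\tilde{\bar x}_j^T(k+1)vecs(X)$ using the mean dynamics \eqref{e2_4}, $\bar x_j(k+1)=A\bar x_j(k)+B\bar u_j(k)$. This gives
\[
\tilde{\bar x}_j^T(k)\,vecs(A^TXA)+2(\bar x_j^T(k)\otimes\bar u_j^T(k))\,vec(B^TXA)+\tilde{\bar u}_j^T(k)\,vecs(B^TXB).
\]
Substituting into $\bar\Psi\bar H-\bar\Xi=0$ yields an identity of the form
\[
[\bar{\mathcal I}_{\bar x},\bar{\mathcal I}_{\bar x\bar u},\bar{\mathcal I}_{\bar u}]\,[vecs(\bar M_1)^T,vec(\bar M_2)^T,vecs(\bar M_3)^T]^T=0 .
\]
Here $\bar M_3=Z-B^TXB$, $\bar M_2=2(Y-B^TXA)$ (with the $\bar K^i$ cross terms absorbed), and $\bar M_1$ collects $X-A^TXA$, the $\bar K^i$-dependent quadratic terms, and the known weight
\[
\tfrac{N-1}{N}(\Gamma^TQ\Gamma-Q\Gamma-\Gamma^TQ)+\Lambda_2^T(R+\Lambda_1)^{-1}\Lambda_2-(\bar K^i)^T(R+\Lambda_1)\bar K^i .
\]
Assumption \ref{ass2_infinite} forces $\bar M_1=\bar M_2=\bar M_3=0$. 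Hence $Z=B^TXB$ and $Y=B^TXA$. Substituting these into $\bar M_1=0$ and adding the already-learned $P$ (available from Theorem \ref{thm7}, where $\Lambda_1=B^TPB+\sigma^2D^TPD$ and $\Lambda_2=B^TPA+\sigma^2D^TPC$) should show that $X+P$ satisfies the Lyapunov equation \eqref{section4_8} with gain $\bar K^i$. The same rank condition shows that $\bar\Psi$ has full column rank, so $(\bar\Psi^T\bar\Psi)^{-1}$ exists, by the same contradiction argument as in Theorem \ref{thm4}. Therefore the data-based iterate equals the model-based $S^i-P$. The update $\bar K^{i+1}=-(R+\Lambda_1+\bar\Lambda_1^i)^{-1}(\Lambda_2+\bar\Lambda_2^i)$ then coincides with \eqref{section4_9}.

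\textbf{Step 2: stabilization by induction.} I would argue on the operator $\bar{\mathcal L}_i$. Assume $\bar K^i$ is a stabilizer. From \eqref{section_proof_s1}, $S^i-S\ge 0$. From \eqref{section_proof_s4} one obtains
\[
\bar{\mathcal L}_{i+1}(S^i-S^{i+1})-(S^i-S^{i+1})=-(\bar K^i-\bar K^{i+1})^T(R+B^TS^iB+\sigma^2D^TPD)(\bar K^i-\bar K^{i+1}).
\]
Suppose some $\lambda\ge1$ lies in $\sigma(\bar{\mathcal L}_{i+1})$ with eigenmatrix $\mathcal X$. Taking the trace against $\mathcal X$ forces $(\bar K^i-\bar K^{i+1})\mathcal X=0$. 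Then $\lambda$ is also an eigenvalue of $\bar{\mathcal L}_i$, which contradicts the induction hypothesis. Once stabilization holds for all $i$, Theorem \ref{thm6} gives monotonicity, $S^i\to S$, and hence $\bar P^i=S^i-P\to\bar P$.

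\textbf{Main obstacle.} The delicate step is the bookkeeping in Step 1. The regression in $\bar\xi$ is written in terms of $\bar P^i$ alone, while the policy-improvement identity is stated for $S^i=P+\bar P^i$ and contains a $\sigma^2$-term in $P$ through $\bar C^i$ that does not appear in the mean dynamics. I will verify this by directly combining the ARE \eqref{section4_1}, written with $\Lambda_1,\Lambda_2$, with $\bar M_1=0$. The check is that
\[
A^TPA+\sigma^2C^TPC-P-\Lambda_2^T(R+\Lambda_1)^{-1}\Lambda_2=-Q-\tfrac1N(\Gamma^TQ\Gamma-Q\Gamma-\Gamma^TQ),
\]
so that the extra $\Lambda_2^T(R+\Lambda_1)^{-1}\Lambda_2$ term in $\bar\xi$ exactly converts the $\bar P$-equation into \eqref{section4_8}. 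A second point to check is that the trace argument requires a positive semidefinite eigenmatrix $\mathcal X$. For this I would use the Krein–Rutman property of the positive operator $\bar{\mathcal L}_{i+1}$: its spectral radius is attained by some $\mathcal X\ge0$.
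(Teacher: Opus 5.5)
Your outline follows the paper's own proof: expand $\tilde{\bar x}_j(k+1)$ with the mean dynamics, use Assumption~\ref{ass2_infinite} to force $\bar M_1=\bar M_2=\bar M_3=0$, identify the result with \eqref{section4_8}, then invoke Theorem~\ref{thm6}. The gap is that the identification in Step~1 is false, and the check you propose does not repair it. Write $G\triangleq\Gamma^TQ\Gamma-Q\Gamma-\Gamma^TQ$. With the weight you list in $\bar\xi$ and $Y=B^TXA$, $Z=B^TXB$, the condition $\bar M_1=0$ reads
\[
\begin{aligned}
X={}&\tfrac{N-1}{N}G+\Lambda_2^T(R+\Lambda_1)^{-1}\Lambda_2+A^TXA\\
&-(\bar K^i)^T(R+\Lambda_1+B^TXB)\bar K^i .
\end{aligned}
\]
Adding \eqref{section4_1}, as you plan, gives for $\Sigma\triangleq X+P$:
\[
\Sigma=Q+G+A^T\Sigma A+\sigma^2C^TPC-(\bar K^i)^T\bar W(P,\Sigma)\bar K^i .
\]
Expanding \eqref{section4_8} instead gives
\[
\begin{aligned}
\Sigma={}&Q+G+A^T\Sigma A+\sigma^2C^TPC+(\bar K^i)^T\bar H(P,\Sigma)\\
&+\bar H(P,\Sigma)^T\bar K^i+(\bar K^i)^T\bar W(P,\Sigma)\bar K^i ,
\end{aligned}
\]
with $\bar H,\bar W$ as in Assumption~\ref{assum_stab}. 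The two differ by $(\bar K^i)^T\bar H+\bar H^T\bar K^i+2(\bar K^i)^T\bar W\bar K^i$. This vanishes when $\bar K^i=-\bar W(P,\Sigma)^{-1}\bar H(P,\Sigma)$, i.e.\ at a fixed point of \eqref{section4_9}, but not for a generic stabilizing gain such as $\bar K^0$. Switching to regressors in $\bar u_j-\bar K^i\bar x_j$ with the same weight still leaves the residual $(\bar K^i)^T\Lambda_2+\Lambda_2^T\bar K^i+2(\bar K^i)^T(R+\Lambda_1)\bar K^i$. Your ARE identity only reconciles the $\bar K^i$-independent terms. So the least-squares iterate is not $S^i-P$. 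It is governed by the operator $X\mapsto A^TXA-(B\bar K^i)^TX(B\bar K^i)$, not $\bar{\mathcal L}_i$, and neither Theorem~\ref{thm6} nor your Step~2 applies to it. Its fixed points do solve \eqref{section4_2}, but convergence to them is exactly what remains unproved. The paper's proof asserts the same identification without computing it and then falls back on ``asymptotically equivalent'', so following it does not close the gap.

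The repair is to regress on the correct policy-evaluation equation. Subtracting \eqref{section4_1} from \eqref{section4_8} exactly, with $K=-(R+\Lambda_1)^{-1}\Lambda_2$, gives
\[
\bar P^i=\tfrac{N-1}{N}G+(\bar A^i)^T\bar P^i\bar A^i+(\bar K^i-K)^T(R+\Lambda_1)(\bar K^i-K).
\]
So $\bar\xi$ must carry the weight $\tfrac{N-1}{N}G+(\bar K^i-K)^T(R+\Lambda_1)(\bar K^i-K)$. The regressors for $\bar\Lambda_2^i,\bar\Lambda_1^i$ must be $2\bigl(\bar x_j^T\otimes(\bar u_j-\bar K^i\bar x_j)^T\bigr)$ and $\tilde{\bar u}_j-\widetilde{\bar K^i\bar x_j}$, as in Theorem~\ref{thm7}. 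Then your rank argument gives exactly $X=S^i-P$, and the gain update is \eqref{section4_9}. Theorem~\ref{thm6}, including its part (1), then delivers stabilization and $\bar P^i\to\bar P$.

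Two further points. First, full column rank of $\bar\Psi$ does not follow from the Theorem~\ref{thm4} argument alone. Because the $X$-block is $\tilde{\bar x}_j(k)-\tilde{\bar x}_j(k+1)$, the homogeneous system reduces to $X=(\bar A^i)^TX\bar A^i$, and you need $\bar K^i$ to be stabilizing to conclude $X=0$; this is where the induction hypothesis must enter. Second, if you keep Step~2, do not run the trace argument on $S^i-S^{i+1}$. Its sign, and even the existence of $S^{i+1}$, presuppose that $\bar K^{i+1}$ is already stabilizing. Use $S^i-S\ge 0$ and the identity
\[
\begin{aligned}
\bar{\mathcal L}_{i+1}(S^i-S)-(S^i-S)={}&-(\bar K^{i+1}-\bar K)^T\bar W(P,S)(\bar K^{i+1}-\bar K)\\
&-(\bar K^i-\bar K^{i+1})^T\bar W(P,S^i)(\bar K^i-\bar K^{i+1}),
\end{aligned}
\]
paired with a positive semidefinite eigenmatrix of the adjoint $\bar{\mathcal L}_{i+1}^*$, as in the proof of Theorem~\ref{thm5}. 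Your Krein--Rutman remark supplies that eigenmatrix.
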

    satisfies $\lim\limits_{ i \to \infty } \bar{P}^i = \bar{P} $,  
    where 
    $\bar{\Psi}  \triangleq [\mathcal{N}_{\bar{x}}, 2 \bar{\mathcal{I}}_{ \bar{x} \bar{u} },  \bar{\mathcal{I}}_{\bar{K} \bar{x} \bar{u}}]$.
    All the control gains $\bar{K}^{i+1} = -(R + {\Lambda}_{1}  + \bar{\Lambda}_{1}^i )^{-1} ( {\Lambda}_{2}  + \bar{\Lambda}_{2}^i )$ are stabilizers.

    \emph{Proof.} 
Define a nonzero vector $\bar{H} = [\bar{H}_{v,1}^T,\bar{H}_{v,2}^T,\bar{H}_{v,3}^T]^T$, 
where 
$\bar{H}_{v,1}= vecs(\bar{H}_1) \in \mathbb{R}^ {\frac{n}{2}(n+1)}, $ 
$\bar{H}_{v,2}=vec(\bar{H}_2) \in \mathbb{R}^{\frac{m}{2}(m+1)},  $
$\bar{H}_{v,3}= vecs(\bar{H}_3) \in \mathbb{R}^{mn} $.
From \eqref{e2_4}, we obtain
\begin{equation}
    \begin{aligned}
  & \tilde{\bar{x}}^T_{j}(k+1) \bar{H}_{v,1} =  \tilde{x}^T_{j}(k) vecs(A^T \bar{H}_1 A  ) \\
 &+ 2 (x_j^T(k) \otimes u_j^T(k)) vec(B^T \bar{H}_1 A ) \\
 &+ \tilde{u}^T_{j}(k) vecs(B^T \bar{H}_1 B ) .
    \end{aligned} \nonumber
\end{equation}
Consider the equation $\bar{\Psi}  \bar{H} - \bar{\Xi} = 0$, one has
\begin{equation}
    \begin{aligned}
 \Big[\bar{\mathcal{I}}_{\bar{x}}, \, \bar{\mathcal{I}}_{\bar{x} \bar{u}}, \, \bar{\mathcal{I}}_{\bar{u}}  \Big] 
 \begin{bmatrix} vecs(\bar{M}_{ 1})   \\ vec(\bar{M}_{ 2})  \\ vecs(\bar{M}_{ 3}) \end{bmatrix}
    = 0,
    \end{aligned} \label{infinite_2_model_free}
\end{equation} 
where 
\begin{equation}\medmath{
    \left\{
    \begin{aligned}
        \bar{M}_{ 1} &= \bar{H}_{ 1} -A^T \bar{H}_{ 1} A +{(\bar{K}^{i})}^T \bar{H}_{ 3} {\bar{K}^{i}} \\
        &  - \Big[ \frac{N-1}{N}(\Gamma^TQ\Gamma-Q\Gamma-\Gamma^TQ) \\
        & + { {\Lambda}_{2} }^{T} (R + {\Lambda}_{1} )^{-1} {\Lambda}_{2}- {(\bar{K}^{i})}^T (R + {\Lambda}_{1} ) \bar{K}^{i} \Big], \\
        \bar{M}_{ 2} &= 2(\bar{H}_{ 2} - B^T \bar{H}_{ 1} A  ),  \\
        \bar{M}_{ 3} &= \bar{H}_{  3} - B^T \bar{H}_{ 1} B  . 
    \end{aligned}    
    \right.} \nonumber
\end{equation}
Since Assumption \ref{ass2_infinite} holds,
the equation \eqref{infinite_2_model_free} has a unique solution. Therefore, we have $ \bar{M}_1 = 0,  \bar{M}_2 = 0,  \bar{M}_3 = 0$.
Substituting $ \bar{H}_{  3} = B^T \bar{H}_{ 1} B$ into $\bar{M}_1=0$, one has
  \begin{equation}\medmath{
    \begin{aligned}
      \bar{H}_{ 1} &=   \Big[ \frac{N-1}{N}(\Gamma^TQ\Gamma-Q\Gamma-\Gamma^TQ)  + { {\Lambda}_{2} }^{T} (R + {\Lambda}_{1} )^{-1} {\Lambda}_{2} \Big] \\
        &+ A ^T \bar{H}_{ 1}  A  - {(\bar{K}^{i})}^T (R + {\Lambda}_{1} + B^T \bar{H}_{ 1}  B ) \bar{K}^{i}   , \label{infinite_2_model_free_pf2}
    \end{aligned}}
\end{equation} 
Let $\bar{H}_{ 1}  = \bar{P}^{i}$, the above equation becomes \eqref{infinite_2_model_free_theorem}
and can also be obtained by subtracting \eqref{section4_1} from \eqref{section4_8}.
Similar to the proof of Theorem \ref{thm7},
the gain $\bar{K}^{i+1}$ updated by \eqref{infinite_2_model_free_theorem} is a stabilizer for all $i \in \mathbb{N}$.
By Theorem \ref{thm6}, the convergence is proved and the sequence in \eqref{infinite_2_model_free_theorem} or \eqref{infinite_2_model_free_pf2} is asymptotically equivalent to $\{\bar{P}^{i}\}_{i=0}^\infty$.
{{\hfill{$\Box$}}}

    \begin{thm}  \label{thm9}
        Assume the initial control gains are set as 
        $K_{k}^{ 0 } = K^{ 0 }$ and $\bar{K}_{k}^{ 0 } = \bar{K}^{ 0 }$ 
        for all $k \in [k_0, T)$ in Theorems~\ref{thm1} and~\ref{thm2}, 
        where $K^{ 0 }$ and $\bar{K}^{ 0 }$ are identical to those in 
        Theorems~\ref{thm5} and~\ref{thm6}. 
        Then, as $T \to \infty$, it follows that 
        $\lim_{k \to -\infty} P_{k}^{ i } = P^{i}$, 
        $\lim_{k \to -\infty} K_k^{ i } = K^{ i }$, 
        $\lim_{k \to -\infty} S_{k}^{ i } = S^{i}$, 
        and $\lim_{k \to -\infty} \bar{K}_{k}^{ i } = \bar{K}^{ i }$. 
        Thus, for $T \to \infty$, the finite-horizon policy iteration algorithms in 
        Theorems~\ref{thm1} and~\ref{thm2} become asymptotically equivalent to the infinite-horizon 
        algorithms in Theorems~\ref{thm5} and~\ref{thm6}.
    \end{thm}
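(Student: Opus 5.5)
The plan is an induction on the policy-iteration index $i$, combined, for each fixed $i$, with a convergence argument for a backward linear recursion in $k$ with constant coefficients.

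\textbf{Base case $i=0$.} With $K_k^0=K^0$ for all $k$, the recursion \eqref{th1_1} for $i=0$ reads
\[
P_k^0=\mathcal{Q}_0+\mathcal{L}_0(P_{k+1}^0),\qquad \mathcal{Q}_0\triangleq Q+\tfrac{1}{N}(\Gamma^TQ\Gamma-Q\Gamma-\Gamma^TQ)+(K^0)^TRK^0,
\]
with terminal value $P_{T+1}^0$. Here $\mathcal{L}_0$ is the Lyapunov-type operator from the proof of Theorem~\ref{thm5}. Since $K^0$ is a stabilizer, the spectral radius satisfies $\rho(\mathcal{L}_0)<1$. Iterating gives
\[
P_k^0=\sum_{s=0}^{T-k}\mathcal{L}_0^s(\mathcal{Q}_0)+\mathcal{L}_0^{T+1-k}(P_{T+1}^0).
\]
As $k\to-\infty$ (equivalently $T-k\to\infty$), the tail $\mathcal{L}_0^{T+1-k}(P_{T+1}^0)$ vanishes geometrically, because $\|\mathcal{L}_0^s\|\le c\rho^s$ for some $\rho<1$. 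The sum converges to $\sum_{s\ge0}\mathcal{L}_0^s(\mathcal{Q}_0)$, which is the unique solution of \eqref{th4_1} with $i=0$, namely $P^0$. Hence $\lim_{k\to-\infty}P_k^0=P^0$. By continuity of \eqref{th1_2} in $P_{k+1}^0$, it follows that $K_k^1\to K^1$; this uses that $R+B^TP^0B+\sigma^2D^TP^0D>0$, so the inverse is continuous near the limit.

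\textbf{Inductive step.} Suppose $K_k^i\to K^i$ as $k\to-\infty$, where $K^i$ is a stabilizer by Theorem~\ref{thm5}(1). Then \eqref{th1_1} is a time-varying recursion
\[
P_k^i=\mathcal{Q}_k^i+\mathcal{L}_k^i(P_{k+1}^i),
\]
whose coefficients converge to those of the time-invariant pair $(\mathcal{Q}_i,\mathcal{L}_i)$ with $\rho(\mathcal{L}_i)<1$. This is the main obstacle: one needs a perturbation lemma showing that the solution of an asymptotically time-invariant, uniformly stable backward recursion converges to the fixed point.

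I would argue it as follows.
\begin{itemize}
\item Choose a norm in which $\|\mathcal{L}_i\|\le\rho<1$, for instance the induced norm from a weighted Lyapunov functional, which exists because $\rho(\mathcal{L}_i)<1$.
\item For $k\le k_1$ with $k_1$ sufficiently negative, we then have $\|\mathcal{L}_k^i\|\le\rho'<1$ and $\|\mathcal{Q}_k^i-\mathcal{Q}_i\|\le\varepsilon$.
\item Set $E_k=P_k^i-P^i$. It satisfies
\[
E_k=\mathcal{L}_k^i(E_{k+1})+(\mathcal{L}_k^i-\mathcal{L}_i)(P^i)+(\mathcal{Q}_k^i-\mathcal{Q}_i),
\]
so $\|E_k\|\le\rho'\|E_{k+1}\|+c\varepsilon$.
\item Iterating backward from $k_1$ gives $\limsup_{k\to-\infty}\|E_k\|\le c\varepsilon/(1-\rho')$. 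Letting $\varepsilon\to0$ yields $E_k\to0$.
\end{itemize}
A subtlety is the order of limits. Here $T\to\infty$ with $k$ fixed is the same as $k\to-\infty$ with $T$ fixed, since the recursion depends only on $T-k$ when the initialization is constant in $k$ — except that $K_k^i$ for $i\ge1$ depends on $P_{k+1}^{i-1}$, whose $k$-dependence is again only through $T-k$. I would note this shift-invariance explicitly to justify identifying the two limits. Once $P_k^i\to P^i$, continuity of \eqref{th1_2} gives $K_k^{i+1}\to K^{i+1}$, which closes the induction.

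\textbf{The pair $S_k^i$, $\bar K_k^i$.} The same scheme applies to \eqref{th2_1}--\eqref{th2_2}, with three extra ingredients:
\begin{itemize}
\item The coefficient term $\sigma^2(\bar C_k^i)^TP_{k+1}\bar C_k^i$ involves the finite-horizon solution $P_{k+1}$. By Theorem~\ref{thm1} together with the $i\to\infty$ statement (or by the same argument applied to the Riccati recursion \eqref{e1_5} under the stabilizability of Assumption~\ref{assum_stab}), $P_{k+1}\to P$ as $k\to-\infty$. This is an additional asymptotically vanishing perturbation in $\mathcal{Q}_k^i$.
\item The operator to use is $\bar{\mathcal{L}}_i$. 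Its stability comes from Theorem~\ref{thm6}(1).
\item The update \eqref{th2_2} is implicit in $\bar K_k^{i+1}$, but it is linear. Solving it gives
\[
\bar K_k^{i+1}=-(R+B^TS_{k+1}^iB+\sigma^2D^TP_{k+1}D)^{-1}(B^TS_{k+1}^iA+\sigma^2D^TP_{k+1}C),
\]
which is continuous in $(S_{k+1}^i,P_{k+1})$ near the invertible limit. This yields $\bar K_k^{i+1}\to\bar K^{i+1}$.
\end{itemize}
Combining the two inductions gives all four limits, and hence the asymptotic equivalence of the finite-horizon algorithms with the infinite-horizon ones.
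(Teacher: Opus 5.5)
Your argument for the chain $(P_k^i,K_k^i)$ is correct, and it takes a different route from the paper's. The paper argues by monotone convergence. From $(\tilde P^i,\tilde S^i)\in\bar{\mathcal M}$ it builds an auxiliary cost with positive definite weights and shows that the associated value matrices are nondecreasing as the horizon is extended backward. It bounds them via a stabilizing feedback ($i=0$) or via $P_k^{i+1}\le P_k^i$ (inductive step), and identifies the limits through \eqref{th4_1} and \eqref{section4_8}. You instead exploit linearity of \eqref{th1_1} and $\rho(\mathcal L_i)<1$ from Theorem~\ref{thm5}(1), using an adapted norm and a contraction estimate with vanishing forcing. This needs neither $\bar{\mathcal M}\neq\varnothing$ nor any sign information on $\mathcal Q_k^i$. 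It also works on the policy-evaluation recursion itself, whereas the paper's auxiliary value matrices are optimal values and so obey a Riccati-type recursion, not \eqref{th1_1}. Your shift-invariance remark correctly reconciles $T\to\infty$ with $k\to-\infty$.

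The gap is in the $(S_k^i,\bar K_k^i)$ part, at the claim $P_{k+1}\to P$ as $k\to-\infty$. Here $P_{k+1}$ solves the Riccati difference equation \eqref{e1_5} and $P$ is the stabilizing solution of \eqref{section4_1}. Neither of your justifications works.
\begin{itemize}
\item Theorem~\ref{thm1} gives $P_k\le P_k^i$. Combined with $P_k^i\to P^i\downarrow P$, this yields only $\limsup_{k\to-\infty}P_k\le P$. The reverse inequality would require interchanging $\lim_i$ and $\lim_k$, and nothing justifies that.
\item Your perturbation lemma presupposes coefficients already known to converge. In \eqref{e1_5} the gain is determined by $P_{k+1}$ itself, so linearizing at $P$ (derivative $\mathcal L_K$) shows only that $P$ is locally attracting. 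The backward trajectory started at $P_{T+1}$ need not enter that basin.
\end{itemize}

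In fact the claim, and with it the $S$-part of the statement, can fail under the paper's assumptions. Take $n=m=1$, $A=2$, $B=R=\sigma=1$, $C=c\neq0$ small, $D=0$, $Q=0$.
\begin{itemize}
\item $K=\bar K=-3/2$ stabilizes, and $(\tilde P,\tilde S)=(1,1)\in\bar{\mathcal M}$, so Assumption~\ref{assum_stab} holds.
\item \eqref{e1_5}--\eqref{e1_6} have the regular solution $P_k=\bar P_k\equiv0$, while the stabilizing solution is $P=(3+c^2)/(1-c^2)$.
\item With $\bar K^0=-3/2$, \eqref{th2_1} gives $S_k^0\to3$, whereas \eqref{section4_8} gives $S^0=3+\tfrac43c^2P$.
\end{itemize}
So an extra hypothesis ensuring $P_k\to P$ is needed. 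For example, $P_{T+1}\ge P$ suffices, since a comparison argument then gives $0\le P_k-P\le\mathcal L_K^{T+1-k}(P_{T+1}-P)$. Under such a hypothesis your perturbation argument goes through unchanged. The paper's proof has the same hole: it passes to the limit in \eqref{th2_1} and asserts the result solves \eqref{section4_8}, which contains $P$ rather than $\lim_k P_{k+1}$.
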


\emph{Proof.} 
For the $i$-th iteration of policy iteration and $(\tilde{P}^i, \tilde{S}^i) \in \bar{\mathcal{M}}$, we consider the auxiliary cost function
\begin{equation}
\begin{aligned}
\mathcal{J}^i(l, \zeta) &= \sum_{k=l}^T \mathbb{E} \Big\{  \begin{bmatrix}
\bar{y}_k \\
\bar{v}_k
\end{bmatrix}^T
\overline{\mathcal{H}} (\tilde{P}^i, \tilde{S}^i)
\begin{bmatrix}
\bar{y}_k \\
\bar{v}_k
\end{bmatrix} \\
&+
\begin{bmatrix}
y_k - \bar{y}_k \\
v_k - \bar{v}_k
\end{bmatrix}^T
\mathcal{H} (\tilde{P}^i)
\begin{bmatrix}
y_k - \bar{y}_k \\
v_k - \bar{v}_k
\end{bmatrix}
\Big\},
\end{aligned} \label{payoff_MF}
\end{equation}
with initial state $\zeta$. 
Furthermore, the corresponding value function is
\begin{equation}
\begin{aligned}
\mathcal{V} ^i(l) &=
\inf_{u \in\mathcal{U}_{ad} } \mathcal{J}^i(l, \zeta) \\
&=
\mathbb{E}\left[(y_l - \bar y_l)^T U_l^i (y_l - \bar y_l)\right] 
+ (\bar y_l)^T V_l^i \bar y_l. 
\end{aligned} \label{Value_MF}
\end{equation}
We first prove the monotonicity of the sequences $\{U_l^i\}_{l=-\infty}^T$ and $\{V_l^i\}_{l=-\infty}^T$ with respect to time horizon  $l$ .
Since $(\tilde{P}^i, \tilde{S}^i) \in \bar{\mathcal{M}}$, we have
$\overline{\mathcal{H}} (\tilde{P}^i, \tilde{S}^i) \geq 0$ and
$ \mathcal{H} (\tilde{P}^i ) \geq 0$.
Consequently, every term in \eqref{payoff_MF} is nonnegative.
Extending the time horizon backwards by one step (from $[l,T]$ to $[l-1,T]$) cannot decrease the cost function for the same initial state $\zeta$.
Therefore, $\mathcal{J}^i(l-1, \zeta) \geq \mathcal{J}^i(l, \zeta)$ and $\mathcal{V} ^i(l-1) \geq \mathcal{V} ^i(l)$.
To prove the monotonicity of the sequences $\{U_l^i\}_{l=-\infty}^T$ and $\{V_l^i\}_{l=-\infty}^T$ separately,
as in \cite{ni2015indefinite},
we construct  the following two specific choices of the initial state $\zeta$:

(i) Let $\zeta = \mu \in \mathbb{R}^n$, 
from \eqref{Value_MF}, we have $\mu^T V_{l-1}^{i} \mu \geq  \mu^T V_l^{i} \mu$, which implies $ V_{l-1}^{i} \geq  V_l^{i}$.

(ii) Let $\zeta = \xi \epsilon,$ $ \xi \in \mathbb{R}^n$, $\epsilon$ is a random variable with $\mathbb{P}(\epsilon = 1) = \mathbb{P}(\epsilon = -1) = \frac{1}{2}$.  
From \eqref{Value_MF}, we have $\xi^T U_{l-1}^{i} \xi \geq  \xi^T U_l^{i} \xi$, which implies $ U_{l-1}^{i} \geq  U_l^{i}$.
 
Next, we show that the statement of the theorem holds for all $i \in \mathbb{N}$ by mathematical induction.
If $i = 0$, by Assumption \ref{assum_stab}, the system \eqref{e1_1} is $L^2-$stabilizable 
and for any stabilizing control $v_k = K (y_k - \bar{y}_k) + \bar{K} \bar{y}_k, $
the state $y_k$  is mean-square bounded.
Since 
$(\mathbb{E}  (y))^2  \leq \mathbb{E} (y^2) $ and 
$\mathbb{E}  [y - \mathbb{E}(y)]^2 = \mathbb{E} (y^2) - (\mathbb{E}(y))^2 \leq \mathbb{E} (y^2) $,
\eqref{Value_MF} becomes
\begin{equation}
  \begin{aligned}
  &  \mathbb{E}\left[(y_l - \bar y_l)^T U_l^0 (y_l - \bar y_l)\right] + (\bar y_l)^T V_l^0 \bar y_l \\
& \leq 
    \sum_{k=l}^T \mathbb{E} \Big\{ \bar{y}_k^T \begin{bmatrix}
I_n \\
\bar{K}
\end{bmatrix}^T
\overline{\mathcal{H}} (\tilde{P}^0, \tilde{S}^0)
\begin{bmatrix}
I_n \\
\bar{K}
\end{bmatrix} \bar{y}_k \\
&+ (y_k - \bar{y}_k)^T
\begin{bmatrix}
I_n \\
{K}
\end{bmatrix}^T
\mathcal{H} (\tilde{P}^0)
\begin{bmatrix}
I_n \\
{K}
\end{bmatrix} (y_k - \bar{y}_k)
\Big\}\\
& \leq
 c \sum_{k=l}^{\infty} \mathbb{E}|y_k|^2 < \infty,
  \end{aligned}  
\end{equation}
where $c > 0$ is a constant.
Next, we need to discuss the boundedness of the sequences $\{U_l^0\}_{l=-\infty}^T$ and $\{V_l^0\}_{l=-\infty}^T$ separately.
 For deterministic $y_l = \mu$, we have $\mu^T V_l^{0} \mu < \infty$ from the above equation.
 For random $y_l = \xi \epsilon,$ we have $\xi^T U_l^{0} \xi < \infty$ from the above equation.

Thus as $l \to -\infty$, 
$\{U_l^0,U_{l-1}^0,U_{l-2}^0, \cdots, U_{-\infty}^0 \}$ 
and  $\{V_l^0,V_{l-1}^0,V_{l-2}^0, \cdots, V_{-\infty}^0 \}$ are bounded nondecreasing sequences.
Let  $P_k^i = U_k^i + \tilde P^i$ and $S_k^i = V_k^i + \tilde S^i$.
$\lim_{k \to -\infty} P_{k}^{ 0 } $ and $\lim_{k \to -\infty} S_{k}^{ 0 } $ exist.
Taking the limit $k \to -\infty$ on the both sides of \eqref{th1_1} and \eqref{th2_1},
$(P_{k}^{0}, K_{k}^{0}, S_{k}^{0}, \bar{K}_{k}^{0})$ converges to $(P^{0}, K^{0}, S^{0}, \bar{K}^{0})$, where $(P^{0} , S^{0})$ is the solution given in \eqref{th4_1} and \eqref{section4_8} when $i=0$.

Suppose the result holds for $i > 0$, that is, $(P_{k}^{i}, K_{k}^{i}, S_{k}^{i}, \bar{K}_{k}^{i})$ converges to $(P^{i}, K^{i}, S^{i}, \bar{K}^{i})$ as $k \to -\infty$.
We prove that the result also holds for $i+1$.
By the induction hypothesis, 
$(K_{k}^{i+1}, \bar{K}_{k}^{i+1})$ converges to $(K^{i+1}, \bar{K}^{i+1})$ as $k \to -\infty$, 
where $K_{k}^{i+1}$ and $\bar{K}_{k}^{i+1}$ are given in \eqref{th1_2} and \eqref{th2_2}, respectively,
and $K^{i+1}$, $\bar{K}^{i+1}$  are given in  \eqref{th4_2} and \eqref{section4_9}, respectively.
Based on Theorems \ref{thm1} and \ref{thm2}, $P_{k}^{i+1} \leq P_{k}^i$ and $S_{k}^{i+1} \leq S_{k}^i$. 
By the induction hypothesis, that is, $P_{k}^i < \infty$ and $S_{k}^i < \infty$,  
as $k \to -\infty$, 
$\{P_{k}^{i+1},P_{k-1}^{i+1},P_{k-2}^{i+1}, \cdots, P_{-\infty}^{i+1} \}$ 
and  $\{S_{k}^{i+1},S_{k-1}^{i+1},S_{k-2}^{i+1}, \cdots, S_{-\infty}^{i+1} \}$ are bounded nondecreasing sequences. 
The convergence of $\{P_{k}^{i+1}\}_{k=T}^{-\infty}$ and $\{S_{k}^{i+1}\}_{k=T}^{-\infty}$ are proved.  
Furthermore,  $P^{i+1}$ and $S^{i+1}$ satisfy  \eqref{th1_1} and \eqref{th2_1} with $K_{k}^{i+1} =K^{i+1}$ and $\bar{K}_{k}^{i+1} = \bar{K}^{i+1}$, respectively.
Thereby, $\lim_{k \to -\infty} P_{k}^{i+1} = P^{i+1} $ and $\lim_{k \to -\infty} S_{k}^{i+1} = S^{i+1} $. 
Hence, the proof is complete.
{{\hfill{$\Box$}}}

Next, we compare the finite-horizon and infinite-horizon cases.
The main differences are listed below.

    \begin{rem}
        The main distinctions between infinite-horizon and finite-horizon cases are as follows: \\
        (1) Stability requirement. In the infinite-horizon case, an initial stabilizing gain and system stability are required. 
        In contrast, the finite-horizon case does not require stability.   \\
        (2) Time-variety of control policies. In the infinite-horizon case, the control strategies are time-invariant, 
        whereas in the finite-horizon case, 
        it is time-varying since the solution is computed backward from the terminal time $T+1$ to the initial time. \\
        (3) Data utilization efficiency. In the infinite-horizon case, 
        data is efficiently utilized to learn a single fixed policy, 
        whereas the finite-horizon case requires collecting time-specific data for each step 
        due to the policy's time-dependency, increasing complexity.
        In finite-horizon case,
        optimizing $T+1$ strategies (one per time step) leads to a significant increase in parameter space and complexity.
    \end{rem}

To facilitate the robustness analysis, we first introduce the following notation:
$ \beta_1(i) \triangleq (1 - \frac{1}{a_1})^{i-1} \sqrt{n}$,  
$ \beta_2 \triangleq a_1    a_0(\varepsilon) \cdot a_2(\varepsilon) $,
$a_0(\varepsilon) \triangleq \sup_{\hat{K}_i \in \mathcal{G}} \lVert \sum_{t=0}^{\infty} (\hat{\mathcal{L}}_i^*)^t(I) \rVert_2  $,
$a_1 \triangleq \lVert \sum_{t=0}^{\infty}  {({\mathcal{L}}^*)}^t (I) \rVert_2$,
$a_2(\varepsilon) \triangleq \sup_{\hat{K}_i \in \mathcal{G}} \lVert (R + B^T \hat{P}^{i-1} B + \sigma^2 D^T \hat{P}^{i-1} D) \rVert_2$,
$ \bar \beta_1(i) \triangleq (1 - \frac{1}{\bar a_1})^{i-1} \sqrt{n}$,  
$ \bar \beta_2 \triangleq \bar a_1    \bar a_0(\varepsilon) \cdot \bar a_2(\varepsilon) $,
$ \bar a_0(\varepsilon) \triangleq \sup_{\hat{\bar K}_i \in \bar {\mathcal{G}}} $ $\lVert \sum_{t=0}^{\infty} (\hat{\bar {\mathcal{L}}}_i^*)^t(I) \rVert_2  $,
$ \bar a_1 \triangleq \lVert \sum_{t=0}^{\infty}  {(\bar {\mathcal{L}}^*)}^t (I) \rVert_2$,
and
$ \bar a_2(\varepsilon) \triangleq \sup_{\hat{\bar K}_i \in \bar {\mathcal{G}}} \lVert (R + B^T \hat{S}^{i-1} B + \sigma^2 D^T P D) \rVert_2$.
The $l^\infty$-norm is defined as
$\lVert \Delta K \rVert_{\infty} \triangleq \sup_{i \in \mathbb{Z}_+} \lVert \Delta K_i \rVert_F$.
The operator $ {\mathcal{L}}^*$ is the adjoint operator of $ {\mathcal{L}} $.

    \begin{thm}  \label{thm10}     
Given $\varepsilon > 0$ and $\hat{K}^1 \in \mathcal{G}$,
where $\mathcal{G} \triangleq \{ \hat{K}^i |\hat{K}^i \, \text{is a stabilizer} , Tr(\hat{P}^i) - Tr(P) \leq \varepsilon \}$,  
there exists $\delta >0$ such that if $\lVert \Delta K \rVert^2_{\infty} < \delta$,
then we have  
\begin{enumerate} 
    \item[(i)]  the following small-disturbance input-to-state stability (ISS) holds:
\begin{equation}
    \begin{aligned} 
        \lVert \hat{P}^{i} - {P}  \rVert_F   
        &\leq  \beta_1(i)  \lVert \hat{P}^{1} - {P}  \rVert_F  
        + \beta_2  \lVert {\Delta K}  \rVert_{\infty}^2. \\
    \end{aligned} \label{ISS_1}
\end{equation}
    \item[(ii)] $\lim_{i \to \infty} \|\Delta  K_i \|_F = 0 \text{ implies } \lim_{i \to \infty} \|\hat{P}_i - P \|_F = 0.$
\end{enumerate}     
\end{thm}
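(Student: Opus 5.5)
The plan follows the robust policy iteration argument of Kleinman type in the style of \cite{cui2024robust}, adapted to the Lyapunov-type operator $\mathcal{L}_i$ of Theorem \ref{thm5}. The perturbed iteration is $\hat{K}^{i+1} = K(\hat{P}^i) + \Delta K_{i}$. Here $K(\cdot)$ is the exact update \eqref{th4_2} and $\Delta K_i$ collects the learning errors of the model-free step (Theorem \ref{thm7}). The estimate $\hat P^{i}$ solves \eqref{th4_1} with $K^i$ replaced by $\hat K^i$.

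\textbf{Step 1: an invariance claim.} First I would show that $\hat K^i\in\mathcal{G}$ for all $i$, by induction, provided $\lVert\Delta K\rVert_\infty^2<\delta$. As in the proof of Theorem \ref{thm5}, set $\tilde K^{i+1}=K(\hat P^i)$. Then $\hat P^i-P\ge 0$ and
\[
\hat{\mathcal L}_{i+1}(\hat P^{i}-\hat P^{i+1})-(\hat P^{i}-\hat P^{i+1}) = -(\hat K^{i}-\hat K^{i+1})^T \hat W_i(\hat K^{i}-\hat K^{i+1}) + E_i,
\]
with $\hat W_i = R+B^T\hat P^{i}B+\sigma^2D^T\hat P^{i}D$. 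Completing the square around $\tilde K^{i+1}$ shows that $E_i$ equals $\Delta K_i^T \hat W_i\Delta K_i$ up to cross terms, and these cross terms vanish because $\tilde K^{i+1}$ is the minimizer. Hence $E_i$ is quadratic in $\Delta K_i$.

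\textbf{Step 2: stability and the contraction bound.} Taking the trace against the solution of the adjoint equation, $\sum_t(\hat{\mathcal L}^*_{i+1})^t(I)$, gives $Tr(\hat P^{i+1})-Tr(P)\le Tr(\hat P^{i})-Tr(P) - (\text{decrease}) + a_0(\varepsilon)a_2(\varepsilon)\lVert\Delta K_i\rVert_F^2$.

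Stability of $\hat K^{i+1}$ follows from the spectral argument of Theorem \ref{thm5}, applied to the nominal gain $\tilde K^{i+1}$. Since the set of stabilizers is open, and $\mathcal{G}$ is compact (sublevel set of trace, with $Q,R$ indefinite but $\bar{\mathcal M}\neq\varnothing$ giving coercivity), there is a uniform radius $\sqrt\delta$ within which perturbations stay stabilizing and keep the trace within $\varepsilon$. This yields $a_0(\varepsilon),a_2(\varepsilon)<\infty$.

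For the contraction I would use the exact PI identity relating $\hat P^i-P$ and the closed-loop operator at the optimum, $\mathcal{L}$. That identity gives the standard linear-rate estimate
\[
Tr(P^{\text{nominal}}_{i+1}-P)\le\left(1-\tfrac1{a_1}\right)Tr(\hat P^i-P),
\]
because $\hat P^i-P\le a_1\,\cdot$ (decrease term), comparing via $\sum_t(\mathcal L^*)^t(I)$.

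\textbf{Step 3: unrolling.} Combining the two estimates gives $Tr(\hat P^{i+1}-P)\le (1-1/a_1)Tr(\hat P^i-P)+a_0a_2\lVert\Delta K_i\rVert_F^2$. Unrolling the recursion and summing the geometric series, with factor $a_1$, yields $\beta_2 = a_1a_0a_2$. Finally, $\lVert\cdot\rVert_F\le Tr(\cdot)\le\sqrt n\lVert\cdot\rVert_F$ for PSD matrices produces the $\sqrt n$ in $\beta_1(i)$, which proves (i).

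For (ii), I would apply the same recursion from an index $i_0$ after which $\lVert\Delta K_i\rVert_F$ is small, using a standard ISS "asymptotic gain" argument: a limsup of the disturbance gives a limsup of the error.

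\textbf{Main obstacle.} The hardest step is the uniform stability of the set $\mathcal{G}$ (the existence of $\delta$) when the weights are indefinite. Positivity of $\hat W_i$ and the bound $\hat P^i\ge P$ must replace the usual $Q>0$ argument. I would try to derive coercivity from $\mathcal H(P)>0$ in Assumption \ref{assum_stab}.
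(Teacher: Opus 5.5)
Your proposal follows essentially the same route as the paper's proof. Both use the perturbed Lyapunov identity whose error term is $\Delta K_i^T\hat W_i\Delta K_i$, take the trace against $\sum_t(\hat{\mathcal L}^*)^t(I)$ to get $a_0$, use the identity \eqref{robust6} for the optimal closed-loop operator to get $a_1$, then unroll the recursion and convert trace to Frobenius norm; the only real difference is that you justify uniform stability of perturbed gains by compactness and coercivity of $\mathcal G$, where the paper simply cites \cite[Lemma 4]{pang2021robust}.

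One slip needs fixing. In your Step 1 identity the decrease term must use the nominal gain, $-(\hat K^{i}-\tilde K^{i+1})^T\hat W_i(\hat K^{i}-\tilde K^{i+1})$, not $\hat K^{i+1}$. With $\hat K^{i+1}$ the cross terms are linear in $\Delta K_i$ and do not vanish, which would destroy the quadratic gain $\beta_2\lVert\Delta K\rVert_\infty^2$. With $\tilde K^{i+1}$ you get exactly $E_i=\Delta K_i^T\hat W_i\Delta K_i$, as in \eqref{robust2}, and this nominal term is also the one that \eqref{robust6} bounds by $a_1$.
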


\emph{Proof.} 
We show that if the current control gain is a stabilizer, 
the next gain remains a stabilizer even under suitable perturbations. 
Given that $\hat{K}^{i-1}$ is a stabilizer, 
there exists a threshold $\delta(r) > 0$ such that 
for any $\hat{P}^{i-1}$ within the closed ball $\bar{\mathcal{B}}(P, r)$, 
the condition $\|\Delta K_{i}\|^2_F < \delta(r)$ ensures that 
the updated controller $\hat{K}^{i}$ remains a stabilizer  
and the matrix $(R+B^T \hat{P}^{i-1}B+\sigma^2 D^T \hat{P}^{i-1}D)$ is invertible \cite[Lemma 4]{pang2021robust}. 
Substituting the perturbed controller $\hat{K}^i = {K}^i + \Delta {K}_i$ into the analysis, 
the original equation \eqref{th4_1} is rewritten as
\begin{equation}\medmath{
    \begin{aligned} 
        \hat{P}^i= & Q+\frac{1}{N}(\Gamma^TQ\Gamma-Q\Gamma-\Gamma^TQ) \\
        & +{(\hat{A}^i)}^T \hat{P}^i \hat{A}^i  
          +\sigma^2{(\hat{C}^i)}^T \hat{P}^i \hat{C}^i+{(\hat{K}^i)}^T R \hat{K}^i. 
    \end{aligned}}  \label{robust1}
\end{equation}
The corresponding Lyapunov-type operator is $\hat{\mathcal{L}}_i(X)$. 
Subtracting the above equation from ${(\hat{A}^i)}^T \hat{P}^{i-1} \hat{A}^i +\sigma^2{(\hat{C}^i)}^T \hat{P}^{i-1} \hat{C}^i$ 
and substituting $\hat{K}^i = {K}^i + \Delta {K}_i$ yields
\begin{equation} \medmath{
    \begin{aligned} 
        &{(\hat{A}^i)}^T ( \hat{P}^{i-1} - \hat{P}^{i}) \hat{A}^i  +\sigma^2{(\hat{C}^i)}^T ( \hat{P}^{i-1} - \hat{P}^{i}) \hat{C}^i - ( \hat{P}^{i-1} - \hat{P}^{i}) \\
        & = -{( {K}^i - \hat{K}^{i-1})}^T \hat{\Upsilon}^{i-1} ( {K}^i - \hat{K}^{i-1})  
        + {\Delta K}_i^T \hat{\Upsilon}^{i-1} {\Delta K}_i,
    \end{aligned}} \label{robust2}
\end{equation}
where $\hat{\Upsilon}^{i-1} =  {\Upsilon}(\hat{P}^{i-1})  \triangleq  R + B^T \hat{P}^{i-1} B + \sigma^2 D^T \hat{P}^{i-1} D $.
Since $\hat{K}^i$ is stabilizer, 
the difference equation above admits a unique series solution
\begin{equation}\medmath{
    \begin{aligned} 
        \hat{P}^{i-1} - \hat{P}^{i}  
        & = \sum_{t=0}^{\infty} \hat{\mathcal{L}}_i^t [ {( {K}^i - \hat{K}^{i-1})}^T \hat{\Upsilon}^{i-1}( {K}^i - \hat{K}^{i-1}) 
        \\& - {\Delta K}_i^T \hat{\Upsilon}^{i-1} {\Delta K}_i].
    \end{aligned}} \label{robust3}
\end{equation}

Take the trace of both sides of \eqref{robust3}.
Regarding the first term, the matrix ${\Upsilon}( {P}^{i-1})$ is positive definite. 
By the sign-preserving property of continuous functions,
the perturbed matrix $\hat{\Upsilon}^{i-1}$ is positive semi-definite for sufficiently small perturbations. 
We derive the following inequality
\begin{equation}
    \begin{aligned} 
        Tr (\hat{P}^{i-1} - \hat{P}^{i} )  
        & \geq Tr \Big\{ 
           {( {K}^i - \hat{K}^{i-1})}^T \hat{\Upsilon}^{i-1} ( {K}^i - \hat{K}^{i-1})    
        \\& - a_0(\varepsilon) \cdot Tr[{\Delta K}_i^T \hat{\Upsilon}^{i-1} {\Delta K}_i]  
        \Big\},
    \end{aligned} \label{robust4}
\end{equation}
where $\hat{\mathcal{L}}_i^*$ is the adjoint operator of $\hat{\mathcal{L}}_i$, and
$a_0(\varepsilon) \triangleq \sup_{\hat{K}_i \in \mathcal{G}} \lVert \sum_{t=0}^{\infty} (\hat{\mathcal{L}}_i^*)^t(I) \rVert_2  $.
Subtracting \eqref{section4_1} from \eqref{robust1} at the $(i-1)$-th iteration yields  
\begin{equation}\medmath{
    \begin{aligned} 
        \hat{P}^{i-1} - {P}  
         =& \sum_{t=0}^{\infty}  {\mathcal{L}}^t [ {( {K}^{i} - \hat{K}^{i-1} )}^T \hat{\Upsilon}^{i-1} {( {K}^{i} - \hat{K}^{i-1} )}
        \\& - {( {K}^{i} -  {K} )}^T \hat{\Upsilon}^{i-1}( {K}^{i} - {K} )].
    \end{aligned}} \label{robust6}
\end{equation}
Taking the trace of \eqref{robust6} 
and applying the trace inequality 
$\text{Tr}(XY) \leq \text{Tr}(X) \|Y\|_2$, 
we obtain
\begin{equation}\medmath{
    \begin{aligned} 
        Tr(\hat{P}^{i-1} - {P})  
        & \leq a_1 \cdot Tr[{( {K}^{i} - \hat{K}^{i-1} )}^T \hat{\Upsilon}^{i-1} {( {K}^{i} - \hat{K}^{i-1} )} ] ,
    \end{aligned}} \label{robust7}
\end{equation}
where
$a_1 \triangleq \lVert \sum_{t=0}^{\infty}  {({\mathcal{L}}^*)}^t (I) \rVert_2$.
 Since the solution of ARE \eqref{section4_1} is stabilizing, 
the spectrum of ${\mathcal{L}}^*(\cdot)$ lies strictly inside the open unit disc,
yielding $a_1 < 1$. 
Substituting the above inequality into \eqref{robust4} yields 
\begin{equation} \medmath{
    \begin{aligned} 
        - Tr (\hat{P}^{i-1} - \hat{P}^{i} )  
        & \leq - \frac{1}{a_1} Tr(\hat{P}^{i-1} - {P})  + a_0(\varepsilon) \cdot a_2(\varepsilon) \cdot \lVert {\Delta K}_i  \rVert_F^2 , 
    \end{aligned}} \label{robust9}
\end{equation}
where $\medmath{a_2(\varepsilon) \triangleq \sup_{\hat{K}_i \in \mathcal{G}} \lVert (R + B^T \hat{P}^{i-1} B + \sigma^2 D^T \hat{P}^{i-1} D) \rVert_2}$. 
Consequently, we obtain the following inequality
\begin{equation}\medmath{
    \begin{aligned} 
        Tr (\hat{P}^{i} - {P}  )  
        & \leq (1 - \frac{1}{a_1}) Tr(\hat{P}^{i-1} - {P})  + a_0(\varepsilon) \cdot a_2(\varepsilon) \cdot \lVert {\Delta K}_i  \rVert_F^2.  
    \end{aligned}} \label{robust10}
\end{equation}
Let $\delta = \frac{a_1}{a_0(\varepsilon) \cdot a_2(\varepsilon)} \cdot \varepsilon$.
When $\lVert {\Delta K}_i  \rVert_F^2 < \delta$, 
the inequality $Tr (\hat{P}^{i} - {P}  ) < \varepsilon$ holds. 
This implies that the updated perturbed controller $\hat{K}^i \in \mathcal{G}$. 
Letting
$\lVert {\Delta K}  \rVert_{F}^2 = max_{i \in \mathbb{N}} \{ \lVert {\Delta K}_i  \rVert_F^2 \}$,
we have $\lVert {\Delta K}  \rVert_{F}^2 < \delta$.
Repeatedly applying the inequality \eqref{robust10},
one has
\begin{equation}\medmath{
    \begin{aligned} 
        Tr &(\hat{P}^{i} - {P}  )   
        \leq (1 - \frac{1}{a_1})^{i-1} Tr(\hat{P}^1 - {P})  \\
        & + \sum_{j=0}^{i-2} (1 - \frac{1}{a_1})^{j} a_0(\varepsilon) \cdot a_2(\varepsilon) \cdot \lVert {\Delta K}_{i-j}  \rVert_F^2. \\
    \end{aligned}} \label{robust11}
\end{equation}
Since 
$\lVert P  \rVert_F \leq  Tr(P) \leq \sqrt{n} \lVert P  \rVert_F$,
one has
\begin{equation}\medmath{
    \begin{aligned} 
        \lVert \hat{P}^{i} - {P}  \rVert_F  
        & \leq (1 - \frac{1}{a_1})^{i-1} \sqrt{n} \lVert \hat{P}^{1} - {P}  \rVert_F  \\
        &+  a_1 \cdot a_0(\varepsilon) \cdot a_2(\varepsilon) \cdot \lVert {\Delta K}  \rVert_{\infty}^2. \\
    \end{aligned}} \label{robust12}
\end{equation} 
This completes the proof of part (i) of Theorem \ref{thm10}. 

As for (ii) in Theorem \ref{thm10}, 
since $\lim_{i \to \infty} \|\Delta K_i\|_F = 0$, 
for any $\varepsilon > 0$, there exists a constant $i_1 \in \mathbb{N}$ 
such that $\sup_{i \geq i_1}\{\|\Delta K_i\|_F^2 \} <  \beta_2^{-1} \cdot \varepsilon /2 $.
From \eqref{ISS_1}, one has
\begin{equation}\medmath{
    \begin{aligned} 
        \lVert \hat{P}^{i} - {P}  \rVert_F   
        &\leq  \beta_1(i -i_1 +1)  \lVert \hat{P}^{i_1} - {P}  \rVert_F  
        + \beta_2 \cdot \sup_{i \geq i_1}  \{ \lVert {\Delta K}  \rVert_{F}^2\}. \\
    \end{aligned}} \nonumber
\end{equation}
Furthermore, since $\beta_1(i) $ tends to 0 as $i \to \infty$ and
$\|\hat{P}^{i_1} - P\|_F$ is a constant independent of the step $i$, 
there exists a constant $i_2 > i_1$ 
such that $\beta_1(i -i_1 +1) \|\hat{P}^{i_1} - P\|_F < \varepsilon / 2$ holds for all $i \geq i_2$.
Then, we have $\|\hat{P}^i - P \|_F < \varepsilon.$
{{\hfill{$\Box$}}}    

    \begin{thm}  \label{thm11}  
Given $\varepsilon > 0$ and $\hat{\bar K}^1 \in \bar{\mathcal{G}}$,
where $\bar{\mathcal{G}} \triangleq \{ \hat{\bar K}^i |\hat{\bar K}^i \, \text{is a stabilizer} , Tr(\hat{S}^i) - Tr(S) \leq \varepsilon \}$,  
there exists $\delta >0$ such that if $\lVert \Delta \bar K \rVert^2_{\infty} < \delta$,
then we have
\begin{enumerate}
    \item[(i)] the following small-disturbance ISS holds:
\begin{equation}
    \begin{aligned} 
        \lVert \hat{S}^{i} - {S}  \rVert_F   
        &\leq  \bar \beta_1(i)  \lVert \hat{S}^{1} - {S}  \rVert_F  
        + \bar \beta_2  \lVert {\Delta \bar K}  \rVert_{\infty}^2. \\
    \end{aligned} \label{ISS_2_1}
\end{equation}
    \item[(ii)]  $\lim_{i \to \infty} \|\Delta  \bar K_i \|_F = 0 \text{ implies } \lim_{i \to \infty} \|\hat{S}_i - S \|_F = 0.$
\end{enumerate}
\end{thm}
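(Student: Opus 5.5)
The plan is to follow the proof of Theorem \ref{thm10} line by line, now for the combined recursion \eqref{section4_8}--\eqref{section4_9}. Throughout, $P$ is treated as the fixed stabilizing solution of \eqref{section4_1}, so the only perturbed object is $\hat{\bar K}^i=\bar K^i+\Delta\bar K_i$. The associated Lyapunov-type operator is $\hat{\bar{\mathcal{L}}}_i(X)=(\hat{\bar A}^i)^T X\hat{\bar A}^i$, and the perturbed Lyapunov equation reads
\[
\hat S^i = Q+\Gamma^TQ\Gamma-Q\Gamma-\Gamma^TQ+(\hat{\bar A}^i)^T\hat S^i\hat{\bar A}^i+\sigma^2(\hat{\bar C}^i)^TP\hat{\bar C}^i+(\hat{\bar K}^i)^TR\hat{\bar K}^i .
\]

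\textbf{Stabilizer preservation.} Assume $\hat{\bar K}^{i-1}$ is a stabilizer and $\hat S^{i-1}$ lies in a closed ball $\bar{\mathcal B}(S,r)$. The analogue of \cite[Lemma 4]{pang2021robust} (continuity of the spectral radius of $\bar{\mathcal L}$ in the gain, together with Theorem \ref{thm6}(1) for the exact update $\bar K^i$) then gives a $\delta(r)$ with the following property: if $\|\Delta\bar K_i\|_F^2<\delta(r)$, then $\hat{\bar K}^i$ is a stabilizer and $\hat{\bar\Upsilon}^{i-1}\triangleq R+B^T\hat S^{i-1}B+\sigma^2D^TPD$ stays positive definite.

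\textbf{Key identities.} Repeating the completion-of-squares computation behind \eqref{section_proof_s4}, but with the perturbed gain, should give
\[
\hat{\bar{\mathcal L}}_i(\hat S^{i-1}-\hat S^i)-(\hat S^{i-1}-\hat S^i)=-(\bar K^i-\hat{\bar K}^{i-1})^T\hat{\bar\Upsilon}^{i-1}(\bar K^i-\hat{\bar K}^{i-1})+\Delta\bar K_i^T\hat{\bar\Upsilon}^{i-1}\Delta\bar K_i .
\]
Here one uses that $\bar K^i$ is the exact minimizer built from $\hat S^{i-1}$ via \eqref{section4_9}; the $\sigma^2D^TPD$ term appears because the $P$-weighted $\bar C$ part is quadratic in the gain in the same way. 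Summing the Neumann series of $\hat{\bar{\mathcal L}}_i$ and taking traces, with $\bar a_0(\varepsilon)$ bounding $\|\sum_t(\hat{\bar{\mathcal L}}_i^*)^t(I)\|_2$, yields the analogue of \eqref{robust4}.

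In the same way, subtracting \eqref{S_1} from the $(i-1)$-th perturbed equation and expanding around the optimal $\bar K$ gives a series representation of $\hat S^{i-1}-S$ through $\bar{\mathcal L}$, the operator at the optimum. The trace inequality then produces the analogue of \eqref{robust7},
\[
Tr(\hat S^{i-1}-S)\le \bar a_1\,Tr\big[(\bar K^i-\hat{\bar K}^{i-1})^T\hat{\bar\Upsilon}^{i-1}(\bar K^i-\hat{\bar K}^{i-1})\big].
\]

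\textbf{Contraction and conclusion.} Combining the two estimates and using $\bar a_2(\varepsilon)$ to bound $\hat{\bar\Upsilon}^{i-1}$ gives the contraction
\[
Tr(\hat S^i-S)\le\Big(1-\tfrac{1}{\bar a_1}\Big)Tr(\hat S^{i-1}-S)+\bar a_0(\varepsilon)\,\bar a_2(\varepsilon)\,\|\Delta\bar K_i\|_F^2 .
\]
Choosing $\delta=\bar a_1\varepsilon/(\bar a_0\bar a_2)$ keeps $\hat{\bar K}^i\in\bar{\mathcal G}$ by induction, so the suprema defining $\bar a_0,\bar a_2$ remain valid. Iterating the contraction, summing the geometric series, and converting trace to Frobenius norm via $\|X\|_F\le Tr X\le\sqrt n\|X\|_F$ (valid since $\hat S^i-S\ge0$ near the optimum) gives \eqref{ISS_2_1}. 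Part (ii) follows exactly as in Theorem \ref{thm10}: restart the ISS estimate at an index $i_1$ beyond which $\|\Delta\bar K_i\|_F^2<\varepsilon/(2\bar\beta_2)$, then let $\bar\beta_1(i-i_1+1)\to0$.

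\textbf{Main obstacle.} The hardest step is the exact-minimizer identity. Update \eqref{section4_9} is written implicitly, with $\bar A^{i+1}$ and $\bar C^{i+1}$ on the right, so one must first rewrite it explicitly as
\[
\bar K^{i}=-(R+B^T\hat S^{i-1}B+\sigma^2D^TPD)^{-1}(B^T\hat S^{i-1}A+\sigma^2D^TPC),
\]
and then check that the cross terms cancel with the $P$-weighted $\bar C$ term. A second, more delicate point is the contraction factor $1-1/\bar a_1$: it must lie in $[0,1)$, which needs $\bar a_1\ge1$ rather than the claimed $\bar a_1<1$. This holds automatically since $\sum_t(\bar{\mathcal L}^*)^t(I)\ge I$, and I would state it that way.
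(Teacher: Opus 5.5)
Your plan is the paper's plan: its proof only says that the argument of Theorem \ref{thm10} carries over. Your two difference identities are correct, and your observation that $\bar a_1\ge 1$ is right and necessary. It is what gives $1-1/\bar a_1\in[0,1)$ and $\sum_{j\ge0}(1-1/\bar a_1)^j=\bar a_1$, hence $\bar\beta_2=\bar a_1\bar a_0\bar a_2$. However, you did not carry this correction through to $\delta$, so the induction step fails as written. From $\mathrm{Tr}(\hat S^{i-1}-S)\le\varepsilon$ and $\|\Delta\bar K_i\|_F^2<\bar a_1\varepsilon/(\bar a_0\bar a_2)$, your contraction only gives
\[
\mathrm{Tr}(\hat S^{i}-S)<\Big(1-\frac{1}{\bar a_1}+\bar a_1\Big)\varepsilon .
\]
This does not yield $\hat{\bar K}^i\in\bar{\mathcal G}$ once $\bar a_1>1$. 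The paper's choice of $\delta$ is consistent only with its false claim $a_1<1$. You need $\bar a_0\bar a_2\delta\le\varepsilon/\bar a_1$, i.e.\ $\delta=\min\{\delta(r),\,\varepsilon/(\bar a_0\bar a_1\bar a_2)\}$, with $r=\varepsilon$ in the stabilizer-preservation step. That radius is justified because, for every stabilizing $\hat{\bar K}^i$ (not only near the optimum),
\[
\hat S^{i}-S=\sum_{t=0}^{\infty}\hat{\bar{\mathcal L}}_i^{\,t}\big[(\hat{\bar K}^{i}-\bar K)^T(R+B^TSB+\sigma^2D^TPD)(\hat{\bar K}^{i}-\bar K)\big]\ge 0 .
\]
Hence $\|\hat S^{i-1}-S\|_F\le\mathrm{Tr}(\hat S^{i-1}-S)\le\varepsilon$. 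The same formula is also the correct justification for your trace-to-Frobenius conversion.

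A second point needs repair. In the induction step you bound $\sum_t(\hat{\bar{\mathcal L}}_i^*)^t(I)$ by $\bar a_0(\varepsilon)$, a supremum over $\bar{\mathcal G}$, at the \emph{new} gain $\hat{\bar K}^i$. But membership of $\hat{\bar K}^i$ in $\bar{\mathcal G}$ is exactly what the step is meant to establish, so the argument is circular; the proof of Theorem \ref{thm10} has the same defect. The compactness argument of \cite{pang2021robust} fixes this:
\begin{itemize}
\item $\bar{\mathcal G}$ is bounded, since $\mathrm{Tr}(\hat S-S)\ge\lambda_{\min}(R+B^TSB+\sigma^2D^TPD)\|\hat{\bar K}-\bar K\|_F^2$.
\item $\bar{\mathcal G}$ is closed in the set of stabilizers, since $\mathrm{Tr}(\hat S-S)$ blows up at its boundary because $A+B\bar K$ is Schur.
\item Hence the exact updates generated from $\bar{\mathcal G}$ form a compact set of stabilizers, and some closed $\rho$-neighbourhood of this set still consists of stabilizers.
\end{itemize}
Bound the Neumann sum by its maximum over that neighbourhood and require also $\delta\le\rho^2$. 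Once $\hat{\bar K}^i\in\bar{\mathcal G}$ is known for all $i$, rerun the contraction with the original $\bar a_0(\varepsilon)$ to obtain the stated $\bar\beta_2$. With these two fixes, the rest of your argument, including part (ii), goes through.
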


   \textbf{Sketch of the proof: }
    The proof of Theorem \ref{thm11} is similar to that of Theorem \ref{thm10}.
    Using the assumption that $\hat{\bar K}^0$ is a stabilizer,
    by analyzing the Lyapunov equations satisfied by the difference matrices $\hat{S}^{i-1} - S$ and $\hat{S}^{i-1} - \hat{S}^{i}$, 
    the perturbed gain $\hat{\bar K}^i$ is stablizer when $\|\Delta  \bar K_i \|_F$ is sufficiently small.
    By taking traces and applying trace inequalities, one has  
    \begin{equation}\medmath{
    \begin{aligned} 
        Tr (\hat{S}^{i} - {S}  )  \leq (1 - \frac{1}{\bar a_1}) Tr(\hat{S}^{i-1} - {S})  + \bar a_0(\varepsilon) \cdot \bar a_2(\varepsilon) \cdot \lVert {\Delta \bar K}_i  \rVert_F^2.  
    \end{aligned}} \nonumber
    \end{equation}
    Iterating the inequality further yields the ISS type bound \eqref{ISS_2_1} for $\lVert \hat{S}^{i} - {S}  \rVert_F$.
    If $\|\Delta  \bar K_i \|_F \to 0$, the first term converges to zero and the second term in the recursive inequality vanishes asymptotically.
    Therefore, $\lim_{i \to \infty} \|\hat{S}_i - S \|_F = 0.$  
    {{\hfill{$\Box$}}}

\section{Simulation Example}
In this section, a numerical example is given to validate the effectiveness of the proposed algorithm. 

\subsection{The finite-horizon case}

Consider a system with 100 agents.  Each agent's dynamics are as follows: $Q=diag(1,1,1),R=1,\Gamma=diag(0.9,0.9,0.9),T=6, \sigma =1$,
$B=\left[  
    -0.2,
      0, 
     0.1
 \right]^T,$
$D=\left[ 
    -0.02,
     -0.03,
     0.01
 \right]^T $,
$ K_0 =\left[ 
    0.1, -0.1, -0.7
 \right]^T $,
$e_j = sin^2(0.5k)+ sin(k) + \xi \cdot cos(k),$ with
$\xi {\sim} \mathcal{N} (0,1),$
\begin{equation}
A= \begin{bmatrix}
    0.6&  0.2& -0.2 \\
    0.2&  0.6& -0.2 \\
    0.4& 0.5& 0.6 \\
\end{bmatrix},
C= \begin{bmatrix}
    0.06&  0.02& -0.02\\
     0.02& 0.06& -0.02\\
     0.04& 0.05& 0.06 \\
\end{bmatrix}.\\
\nonumber
\end{equation}
The simulation results for finite-horizon in 3-D case are plotted in Figures \ref{fig_all_finite}. 
After a period of time, they converge to a constant.
Therefore, the simulation results demonstrate
the effectiveness of the proposed finite-horizon mean field model-free algorithm.

\begin{figure}
	\centering
	\subfigure[]{
		\begin{minipage}[t]{0.48\linewidth}
			\centering
			\includegraphics[width=1.7in]{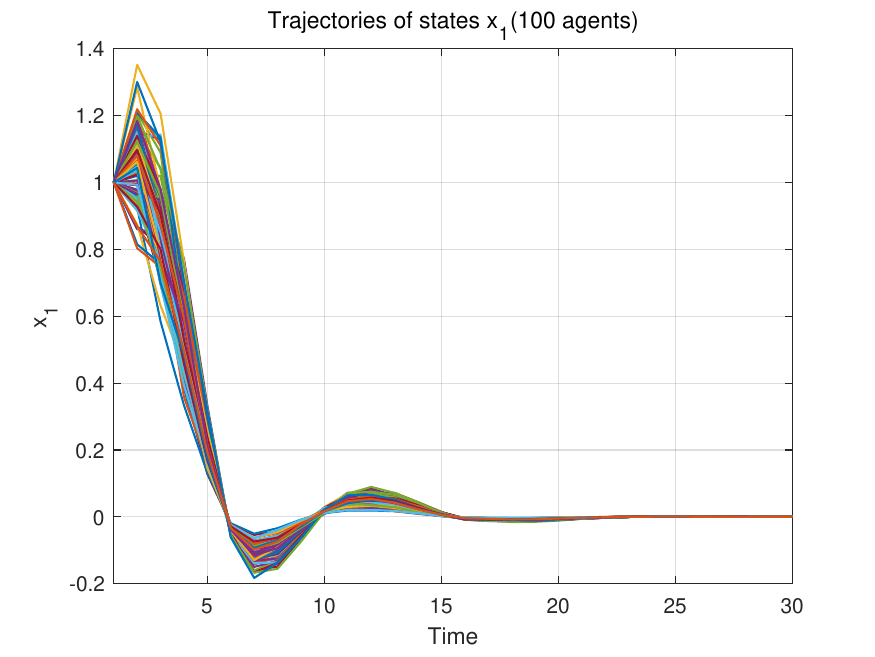}
		\end{minipage}
	}%
	\subfigure[]{
		\begin{minipage}[t]{0.48\linewidth}
			\centering
			\includegraphics[width=1.7in]{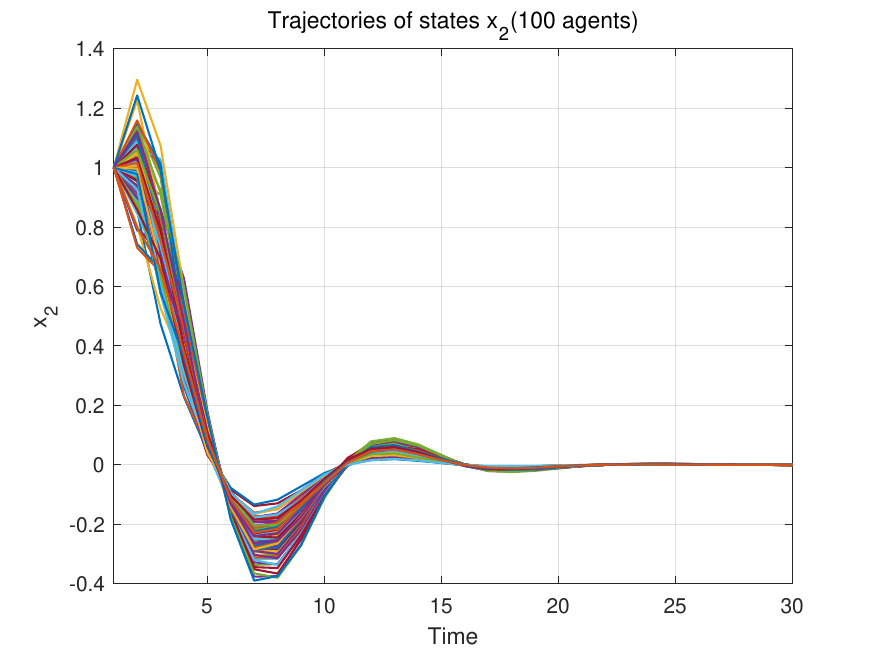}
		\end{minipage}
	}%

	\subfigure[]{
		\begin{minipage}[t]{0.48\linewidth}
			\centering
			\includegraphics[width=1.7in]{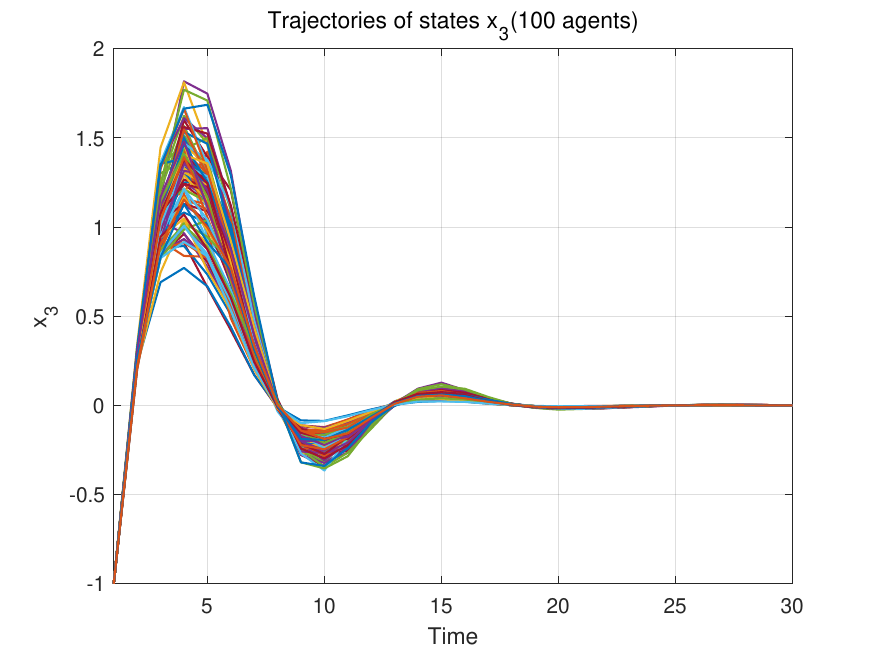}
		\end{minipage}
	}%
	\subfigure[]{
		\begin{minipage}[t]{0.48\linewidth}
			\centering
			\includegraphics[width=1.7in]{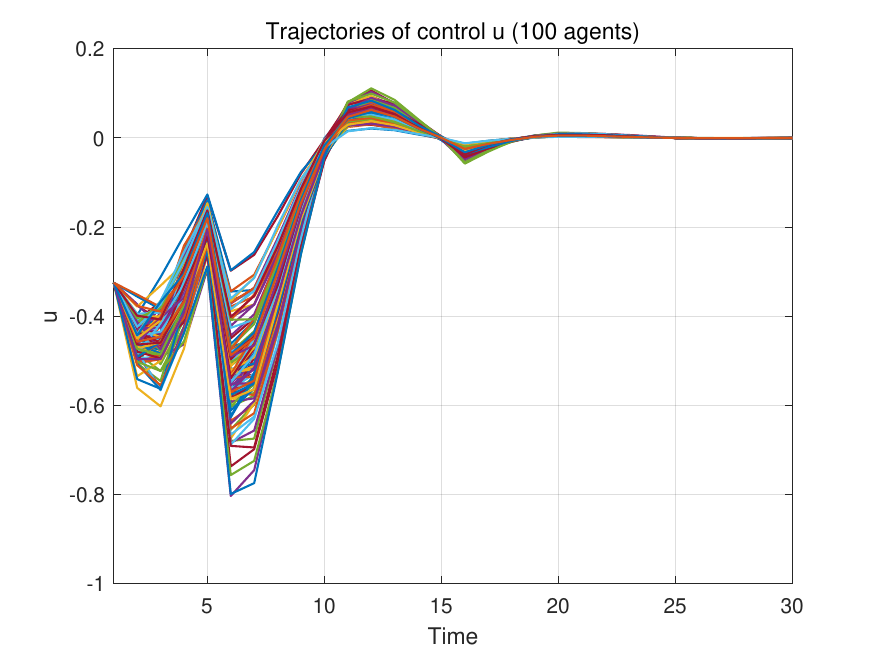}
		\end{minipage}
	}%
	\centering
	\caption{Simulation results for finite-horizon in 3-D case (100 agents). 
     (a) The trajectories of system states $x_1$.
     (b) The trajectories of system states $x_2$.
     (c) The trajectories of system states $x_3$.
     (d) Sample paths of control $u$. }
	\label{fig_all_finite}
\end{figure}

\subsection{ The infinite-horizon case}
Consider a system with 10 agents.  Each agent's dynamics are as follows: 
$R=0.99,\Gamma=diag(0.94,0.94),  \sigma =1$,
$B=\left[  
    -0.84 ,
    -0.67 
 \right]^T,$
$D=\left[ 
    0.03 ,
    0.07 
 \right]^T $,
$ K_0 =\left[ 
    -1.33  ,   3.42 
 \right]^T $,
$e_j = sin^2(0.5k)+ sin(k) + \xi \cdot cos(k)$, with 
$\xi {\sim} \mathcal{N} (0,1),$
\begin{equation} \medmath{
A= \begin{bmatrix}   
    0.65 &  -0.72  \\  
    -0.11 & 0.57
\end{bmatrix},
C= \begin{bmatrix}    
    0.01 &  0.02 \\
    -0.04 & 0.09
\end{bmatrix} 
Q= \begin{bmatrix} 
    0.04 &  -0.86 \\
    -0.86 & 0.91
\end{bmatrix}.\\
}\nonumber
\end{equation}

\begin{figure}  
	\centering
	\subfigure[]{
		\begin{minipage}[t]{0.3\linewidth}
			\centering
			\includegraphics[width=1.19in]{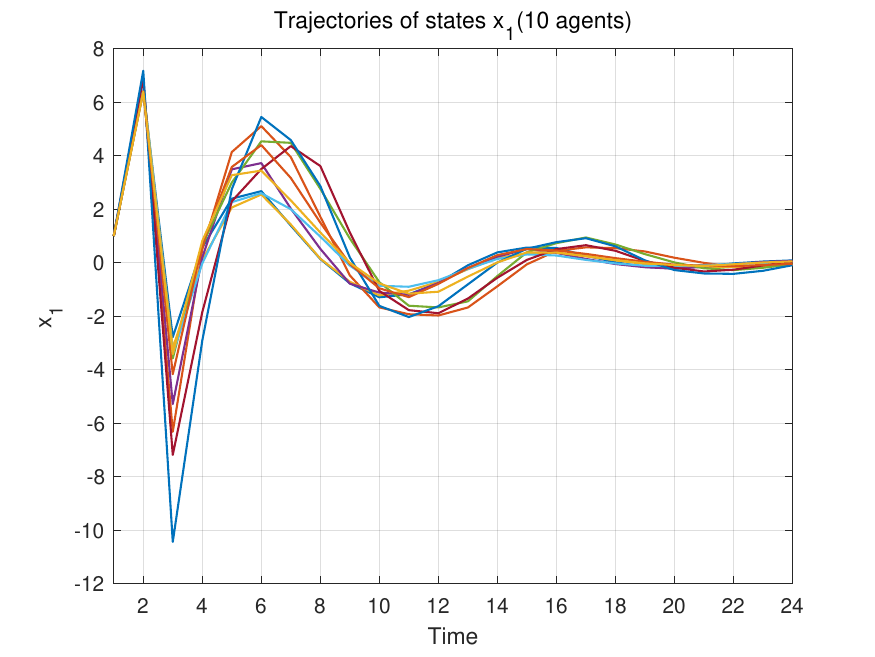}
		\end{minipage}
	}%
	\subfigure[]{
		\begin{minipage}[t]{0.3\linewidth}
			\centering
			\includegraphics[width=1.19in]{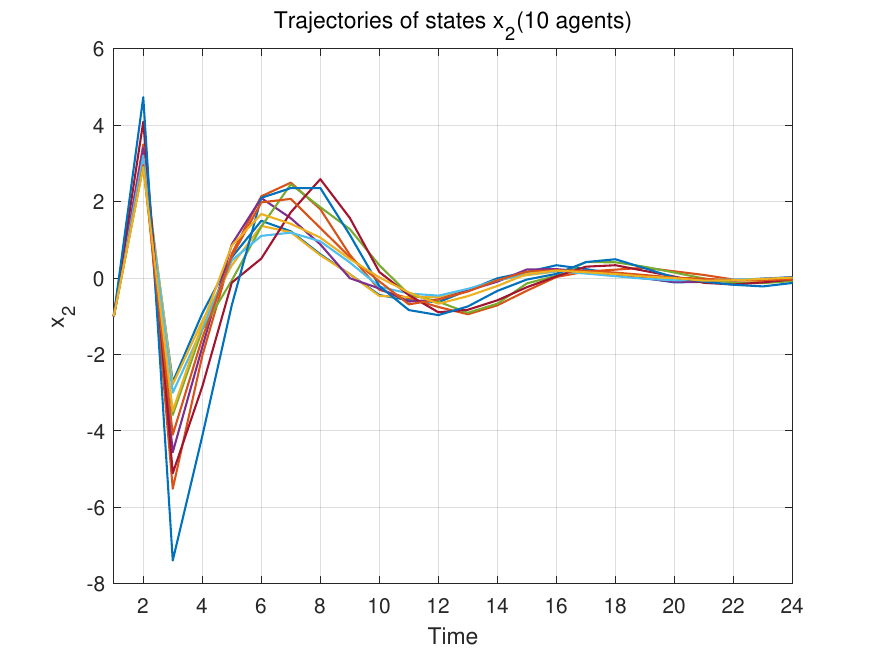}
		\end{minipage}
	}%
	\subfigure[]{
		\begin{minipage}[t]{0.3\linewidth}
			\centering
			\includegraphics[width=1.19in]{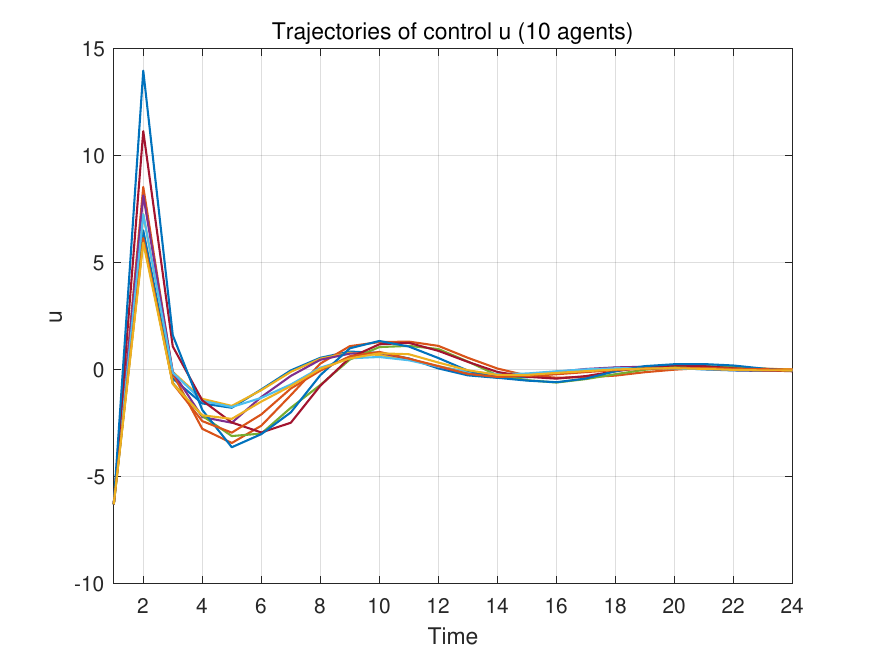}
		\end{minipage}
	}%
	\centering
	\caption{Simulation results for infinite-horizon in 2-D case (10 agents). 
     (a) The trajectories of system states $x_1$.
     (b) The trajectories of system states $x_2$.
     (c) Sample paths of control $u$. }
	\label{fig_all_infinite}
\end{figure}

\begin{figure}[t]
	\centering
	\includegraphics[width=0.9\linewidth]{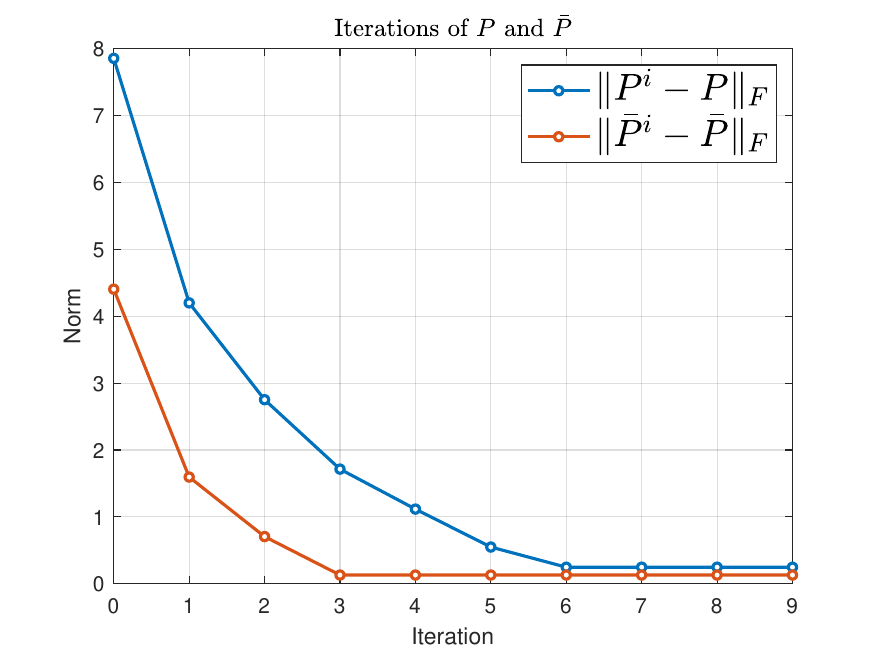}
	\caption{Iterations of $P$ and $\bar{P}$ parameters in infinite-horizon case.}
	\label{fig_infinite_P}
\end{figure}
 
\begin{figure}
	\centering
	\subfigure[$\medmath{(\Delta A,\Delta B)= ( -0.2 A, -0.2B)}$]{  
		\begin{minipage}[t]{0.48\linewidth}
			\centering
			\includegraphics[width=1.7in]{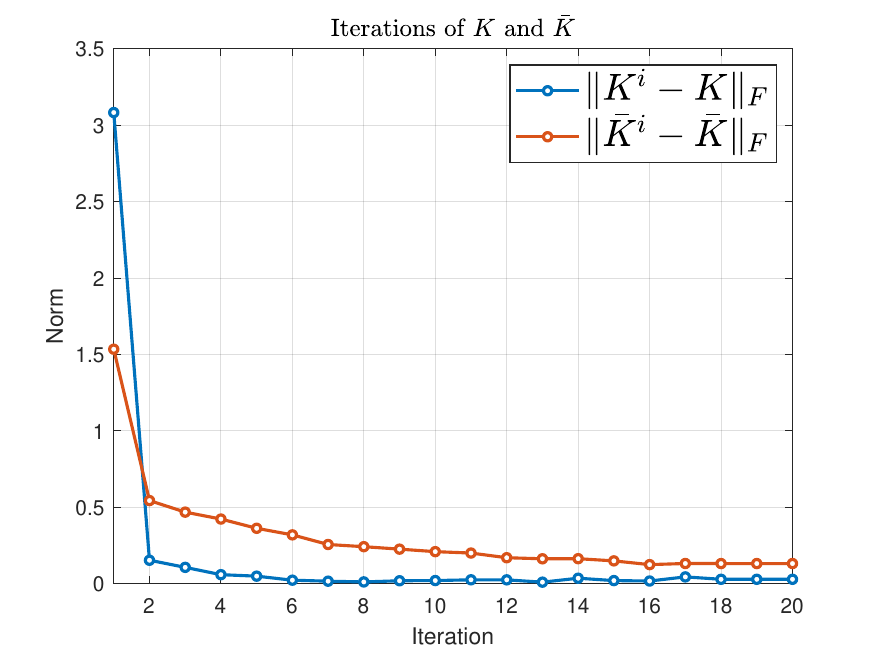}
		\end{minipage}
	}%
	\subfigure[$\medmath{(\Delta A,\Delta B)= ( -0.5 A, -0.5B)}$]{  
		\begin{minipage}[t]{0.48\linewidth}
			\centering
			\includegraphics[width=1.7in]{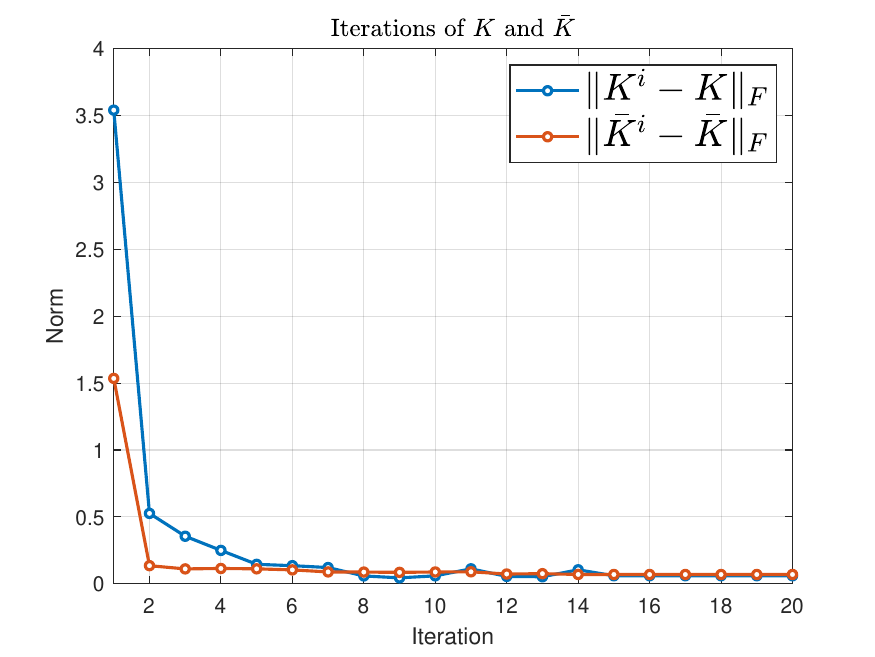}
		\end{minipage}
	}%

	\subfigure[$\medmath{(\Delta C,\Delta D)= ( +0.3 C, +0.3D)}$]{ 
		\begin{minipage}[t]{0.48\linewidth}
			\centering
			\includegraphics[width=1.7in]{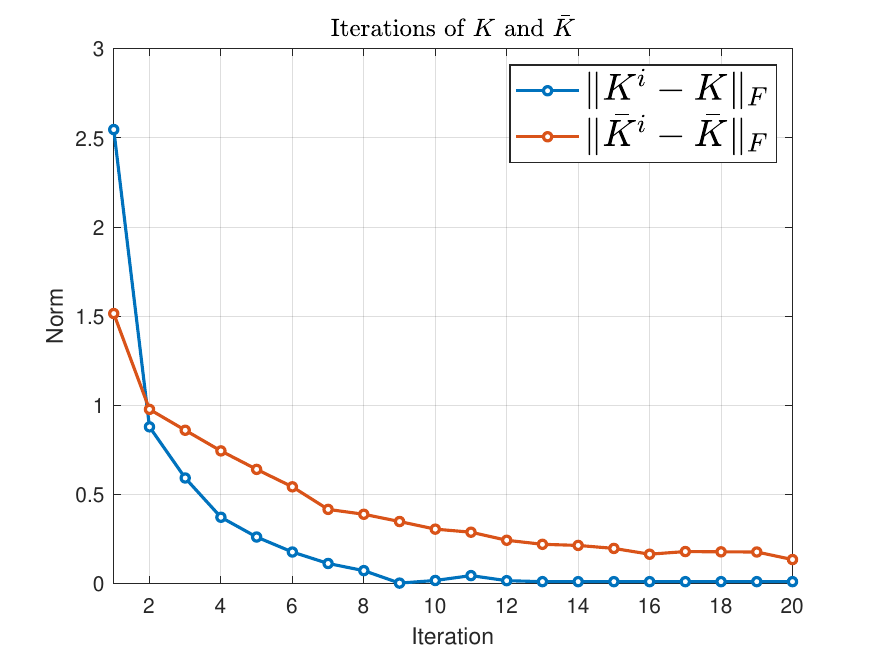}
		\end{minipage}
	}%
	\subfigure[$\medmath{(\Delta C,\Delta D)= ( -0.2 C, -0.2D)}$]{   
		\begin{minipage}[t]{0.48\linewidth}
			\centering
			\includegraphics[width=1.7in]{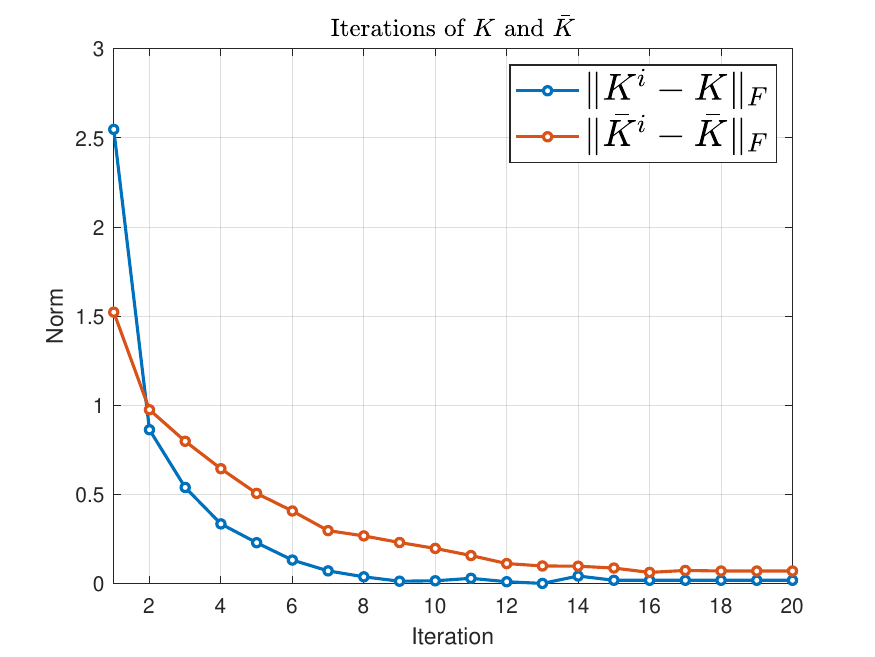}
		\end{minipage}
	}%

	\subfigure[$N=5$]{
		\begin{minipage}[t]{0.48\linewidth}
			\centering
			\includegraphics[width=1.7in]{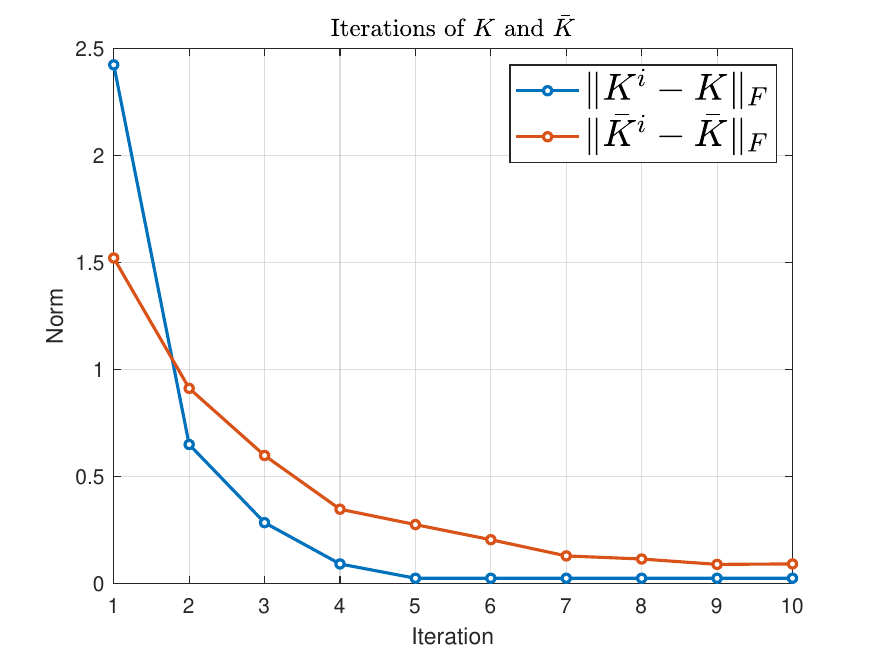}
		\end{minipage}
	}%
	\subfigure[$N=10000$]{
		\begin{minipage}[t]{0.48\linewidth}
			\centering
			\includegraphics[width=1.7in]{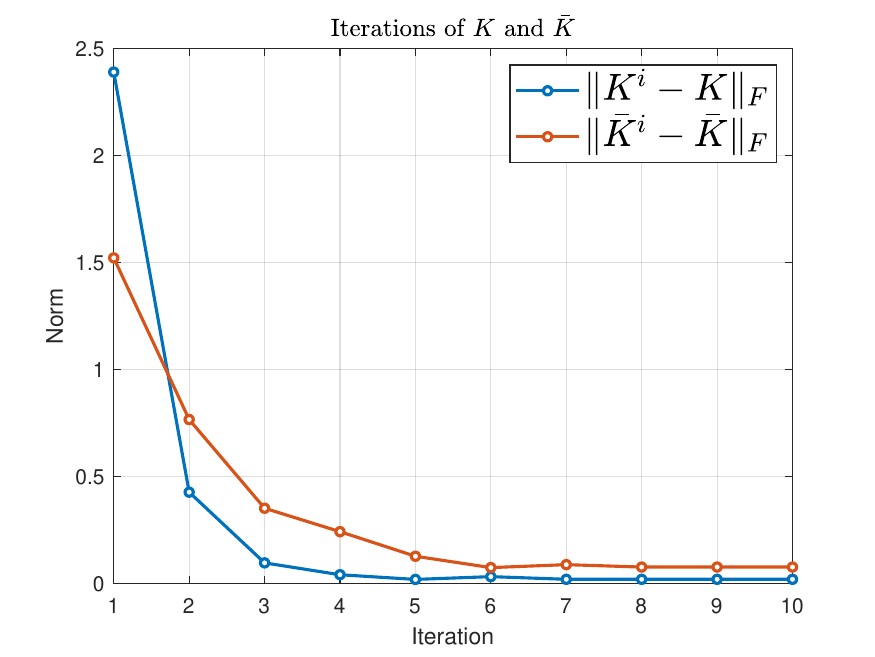}
		\end{minipage}
	}%
	\centering
	\caption{ Gain matrices errors under different perturbations of $(A, B, C, D)$ and $N$.} 
    \label{sensitivity_grid_infinite_horizon}
\end{figure}

The eigenvalues of $Q$ are $-0.49$ and $1.43$ respectively, 
in other words, $Q$ is indefinite.
By using the model-free algorithm, we obtain the solution of the AREs \eqref{section4_1}-\eqref{section4_2}: \\
{
\begin{equation}  
P= \begin{bmatrix}      
    -0.3905 & -0.4531   \\
    -0.4531 & 0.8474
\end{bmatrix},
\bar{P}= \begin{bmatrix}          
    0.3784  &  0.4801 \\
    0.4801  & -0.8466    
\end{bmatrix}.\nonumber
\end{equation}}
The optimal control gains are \\
$K=\left[ \begin{matrix}  
    -0.7602  &  0.9381
\end{matrix} \right],
\bar{K}=\left[ \begin{matrix} 
    -0.0381  & -0.0025  
\end{matrix} \right].$
 
The simulation results for infinite-horizon in 2-D case are plotted in Figures \ref{fig_all_infinite}. 
After a period of time, they converge to a constant.
The resulting errors are $\|P^i - P^*\|_F = 0.1123$ and $\|\bar{P}^i - \bar{P}^*\|_F = 0.0641$.
The iterations of $P$ and  $\bar P$ are presented in Figures \ref{fig_infinite_P}. 
This indicates that the iteartion defined by  \eqref{infinite_1_model_free_theorem} and \eqref{infinite_2_model_free_theorem}  converges.
The results of the sensitivity analysis are presented in Figure \ref{sensitivity_grid_infinite_horizon},
which shows the robustness when perturbing parameters $(A, B, C, D)$ and population sizes.
Therefore, the simulation results demonstrate
the effectiveness of model-free algorithm for infinite-horizon case.

\section{Conclusion}
In this paper, a novel computational model-free algorithm has been presented 
to learn decentralized strategies for discrete-time finite-population systems.
The weights $Q$ and $R$ in the cost functional are not limited to be positive semidefinite. 
The model-free algorithm is proposed to learn the optimal policies directly
from trajectory data,  without learning the system parameters.
For the finite-horizon and infinite-horizon cases, 
the convergence of algorithms is separately established, and the obtained RL solutions are compared.
Simulation results demonstrate the effectiveness of the proposed algorithm.

\begin{ack}                                
The authors would like to thank the editor and three anonymous referees 
for  their constructive and insightful comments  for improving the quality of this work.
\end{ack}

\bibliographystyle{automatica}         
\bibliography{my_meanfield}

\end{document}